\newtheorem{e-proposition}[theorem]{Proposition}
\newtheorem{e-definition}[theorem]{Definition\rm}
\newtheorem{example}{\it Example\/}
\newtheorem{theoreme}{Th\'eor\`eme}[section]
\newtheorem{lemme}[theoreme]{Lemme}
\newtheorem{proposition}[theoreme]{Proposition}
\newtheorem{corollaire}[theoreme]{Corollaire}
\newtheorem{definition}[theoreme]{D\'efinition\rm}
\newtheorem{remarque}{\it Remarque}
\newtheorem{exemple}{\it Exemple\/}
\newcommand{\bm}[1]{\mbox{\boldmath{$#1$}}}
\newcommand{\s}[0]{\ensuremath{P}}
\newcommand{\R}[0]{\ensuremath{R}}
\newcommand{\Q}[0]{\ensuremath{Q}}
\newcommand{\C}[0]{\ensuremath{C}}
\newcommand{\CS}[0]{\ensuremath{CS}}
\newcommand{\RR}[0]{\ensuremath{\mathbb{R}}}
\newcommand{\AF}[0]{\ensuremath{\mathbb{A}}}
\newcommand{\ZZ}[0]{\ensuremath{\mathbb{Z}}}
\newcommand{\QQ}[0]{\ensuremath{\mathbb{Q}}}
\newcommand{\KK}[0]{\ensuremath{\mathbb{K}}}
\newcommand{\CC}[0]{\ensuremath{\mathbb{C}}}
\newcommand{\EN}[1]{\ensuremath{\operatorname{\Gamma^{\star}(#1)}}}
\newcommand{\spec}[0]{\ensuremath{\operatorname{Spec}}}
\newcommand{\specrel}[0]{\ensuremath{\operatorname{\bf Spec}}}
\newcommand{\Bo}[0]{\ensuremath{\operatorname{\mathcal{B}}}}
\newcommand{\Bl}[0]{\ensuremath{\operatorname{B}}}
\newcommand{\N}[0]{\ensuremath{\operatorname{N}}}
\newcommand{\M}[0]{\ensuremath{\operatorname{M}}}
\newcommand{\FI}[0]{\ensuremath{\operatorname{FI}}}
\newcommand{\sing}[0]{\ensuremath{\operatorname{Sing}}}
\newcommand{\Ess}[0]{\ensuremath{\operatorname{Ess}}}
\newcommand{\Hom}[0]{\ensuremath{\operatorname{Hom}}}
\newcommand{\ord}[0]{\ensuremath{\operatorname{Ord}}}
\newcommand{\degt}[1]{\ensuremath{\operatorname{Deg}_{t} #1}}
\newcommand{\p}[0]{\ensuremath{\operatorname{p}}}
\newcommand{\f}[0]{\ensuremath{\operatorname{f}}}
\newcommand{\F}[0]{\ensuremath{\operatorname{F}}}
\newcommand{\e}[0]{\ensuremath{\operatorname{e}}}
\newcommand{\g}[0]{\ensuremath{\operatorname{g}}}
\newcommand{\sym}[0]{\ensuremath{\operatorname{Sym}}}
\newcommand{\E}[0]{\ensuremath{\operatorname{E}}}
\newcommand{\D}[0]{\ensuremath{\operatorname{D}}}
\newcommand{\ens}{\mathcal{E}ns}
\newcommand{\salg}{\s\!-\!\mathcal{A}lg}
\newcommand{\HS}[1]{\operatorname{HS}^{#1}_{\R\slash\s}}
\newcommand{\mJet}[3]{\operatorname{#1(#2)}_{#3}}
\newcommand{\mjp}[1]{\operatorname{\p}_{#1}}
\newcommand{\mts}[1]{\operatorname{T(S)}_{#1}}
\newcommand{\mt}[1]{\operatorname{T}_{#1}}
\newcommand{\codim}[0]{\operatorname{codim}}
\newcommand{\arcSpace}[2]{\operatorname{#1(#2)}_{\infty}}
\newcommand{\HSF}[3]{\operatorname{HS}^{#1}_{#2 \slash #3}}
\newcommand{\DerR}[2]{\operatorname{Der}^{#1}_{\s}(\R,#2)}
\newcommand{\com}[1]{\ensuremath{#1^{\star}}}
\newcommand{\h}[0]{\ensuremath{\operatorname{h}}}
\newcommand{\hk}[0]{\ensuremath{\operatorname{k}}}
\newcommand{\appui}[1]{\h_{#1}}
\tikzset{join/.code=\tikzset{after node path={%
\ifx\tikzchainprevious\pgfutil@empty\else(\tikzchainprevious)%
edge[every join]#1(\tikzchaincurrent)\fi}}}
\tikzset{>=stealth',every on chain/.append style={join},
         every join/.style={->}}
\begin{document}
\selectlanguage{francais}

\title[D\'eformation des espaces de m-jets et d'arcs ]{Familles d'espaces de m-jets et d'espaces d'arcs}

\author{Maximiliano LEYTON-\'ALVAREZ}
\address{Instituto Matem\'atica y F\'isica, Universidad de Talca,
Camino Lircay S$\backslash$N, Campus Norte, Talca, Chile}

\email{leyton@inst-mat.utalca.cl}

\date{\today}

\thanks{Soutien Financier: Projet FONDECYT $\rm{N}^\circ: 11121163$ }

\begin{abstract}
\selectlanguage{francais}
Soit $V$ une vari\'et\'e alg\'ebrique d\'efinie sur un corps $\KK$ alg\'ebriquement clos et de
caract\'eristique nulle. Comme les espaces de $m$-jets $V_m$ et l'espace d'arcs $V_{\infty}$ fournissent des informations sur la
g\'eom\'etrie  de la vari\'et\'e $V$, il est donc naturel de se poser les questions suivantes:
Quand est-ce qu'une d\'eformation de $V$ induit une d\'eformation des espaces $V_m$, $1\leq m\leq \infty$ ? 
Si l'on consid\`ere une d\'eformation  de $V$ qui admet une r\'esolution simultan\'ee 
\`a plat, comment variera l'image de l'application de Nash $\mathcal{N}_{V}$? Dans la section
\ref{sec:Fam-mJ-arc}, on donne quelques r\'eponses partielles \`a ces questions.

Dans la section \ref{sec:Deux-fam-exemp-hyp}, on montre deux familles d'hypersufaces de $\AF_{\KK}^4$ o\`u l'application 
de Nash est bijective. De plus, dans chaque cas,  on donne explicitement une d\'esingularization o\`u toutes
les composantes irr\'eductibles de la fibre exceptionnelle sont des diviseurs essentiels. 
Il faut remarquer que  dans la litt\'erature on ne trouve pas  beaucoup d'exemples de ce type.


\vskip 0.5\baselineskip

\selectlanguage{english}
\noindent{A{\tiny BSTRACT}.}
\noindent {\bf  Families of m-jet spaces and arc spaces.}

Let $V$ be an algebraic variety defined over an algebraically closed field $\KK$ of characteristic zero.  The $m$-jet spaces  $V_m$ and  the space of arcs  $V_{\infty}$ 
provide the information on the geometry of the  variety $V$, therefore it is natural to ask the following
questions:  When will a deformation of $V$ induce a deformation of the spaces $V_m$, $1\leq m\leq \infty$?  If one considers a
deformation of $V$ which admits a flat simultaneous resolution, how will the image of the Nash
application $\mathcal{N}_V$ vary? 
In section \ref{sec:Fam-mJ-arc} some partial answers to these questions can be found.

 In section \ref{sec:Deux-fam-exemp-hyp} two families of  hypersurfaces of $\AF_{\KK}^4$ are shown in which the Nash application is
bijective.  What's more, in each case a desingularization in which all the irreducible components of
the exceptional fiber are essential divisors is explicitly given.  It is important to note that very few
examples of this type are found in the literature.
\end{abstract}
\maketitle
\selectlanguage{francais}
\section{Introduction}
\label{sec:int}

Soit $V$ une vari\'et\'e alg\'ebrique sur un corps $
\KK$ alg\'ebriquement clos  ou une vari\'et\'e analytique complexe ($\KK:=\CC$). 
Dans la fin des ann\'ees 60, John Nash a introduit l'espace d'arcs 
$V_{\infty}$   dans le but d'obtenir  des informations sur la 
g\'eom\'etrie locale du lieu singulier $\sing V$ de $V$ (voir \cite{Nas95}).
 L'espace d'arcs $V_{\infty}$ peut \^etre interpr\'et\'e comme
 l'ensemble de tous les arcs $\spec \KK[[t]]\rightarrow V$,
 muni d'une structure ``naturelle'' de sch\'ema sur $\KK$.  
 Une {\it composante de Nash} associ\'ee \`a $V$ est  une famille d'arcs sur  $V$  passant par  
 le lieu singulier de $V$.  Un diviseur essentiel $\E$ sur $V$ est grosso modo un 
 diviseur exceptionnel dans 
une d\'esingularisation de $V$ qui appara\^it comme une composante irr\'eductible de
la fibre exceptionnelle  de toute  d\'esingularisation possible de $V$.
Nash  a d\'efini une application de l'ensemble des composantes de Nash associ\'ees \`a $V$
dans l'ensemble des diviseurs  essentiels sur $V$ et il  a d\'emontr\'e qu'elle est injective;
cette application est connue sous le nom {\it d'application de Nash}, not\'ee $\mathcal{N}_V $.
Nash pose donc la  question suivante: Est-ce que la application $\mathcal{N}_V $ est 
bijective?  (pour plus de d\'etails voir la section
\ref{ssec:nash}).
\'Etant donn\'e explicitement une vari\'et\'e $V$ et une d\'esingularisation $\pi$, on remarque que d\'eterminer si une composante irr\'eductible de la fibre 
exceptionnelle de $\pi$ est ou non un diviseur essentiel reste un probl\`eme difficile.
L'approche de Nash est donc tr\`es int\'eressante.\\

De nombreux math\'ematiciens, qu'on ne peut pas tous citer ici, 
ont apport\'e de nouvelles contributions originales \`a l'\'etude du probl\`eme de Nash, 
surtout dans le cas de vari\'et\'es
de dimension $2$ et $3$ (voir par exemple \cite{Lej80}, \cite{Reg95}, 
\cite{Gole97}, \cite{LeRe99}, \cite{IsKo03}, \cite{Ish05}, 
\cite{Ish06},\cite{PlPo06}, \cite{Gon07},\cite{Mor08}, \cite{Ple08},\cite{PlPo08}, \cite{Pet09}, 
\cite{Ley11a}, \cite{Bob12}, \cite{PlSp12}, \cite{dFDo14}).

Dans le cas des surfaces,  le probl\`eme est rest\'e ouvert jusqu'\`a l'ann\'ee 2011, quand les auteurs de l'article
\cite{BoPe12a} ont donn\'e une r\'eponse positive \`a la question de Nash  pour toutes les singularit\'es de surfaces sur $\CC$.

Dans l'article \cite{IsKo03} de l'ann\'ee $2003$, les auteurs 
ont d\'ecouvert le premier exemple d'une vari\'et\'e $V$  telle que l'application
de Nash  $\mathcal{N}_V $ n'est pas bijective; cette vari\'et\'e est une 
hypersurface de $\AF^5_{\KK}$ ayant une unique singularit\'e isol\'ee.
Les exemples de vari\'et\'es de dimension trois o\`u l'application de Nash n'est pas bijective ont apparu
au cours de l'ann\'ee $2012$, (voir les articles \cite{dFe13} et \cite{Kol12});
ces exemples sont  des hypersurfaces de $\AF^4_{\KK}$ ayant une unique singularit\'e 
isol\'ee. Vous trouverez plus d'exemples, pour dimensions sup\'erieures ou \'egales \`a trois, dans l'article
\cite{JoKo13}. Comme il est indiqu\'e dans l'article ant\'erieurement cit\'e, tous les  exemples connus de 
vari\'et\'es o\`u la r\'eponse \`a la question de Nash est n\'egative, 
sont obtenus en utilisant la m\^eme m\'ethode centrale. Les auteurs d\'efinissent donc
les diviseurs {\it tr\`es essentiels} sur une 
vari\'et\'e $V$, qui sont grosso modo les diviseurs
essentiels sur $V$ qui ne satisfont pas les hypoth\`eses de la m\'ethode  centrale.  
La question qui se pose est alors la suivante: 
Est-ce que l'image de l'application de Nash $\mathcal{N}_V$ correspond l'ensemble des 
diviseurs tr\`es essentiels sur $V$?
Cette question est tr\`es int\'eressante, mais c'est un probl\`eme que nous 
n'aborderons pas ici.\\

Mais au-del\`a du probl\`eme de Nash, c'est important d'\'etudier les espaces de m-jets et l'espace de arcs,
parce qu'ils fournissent des informations sur la g\'eom\'etrie  de la vari\'et\'e sous-jacente, par exemple, voir les articles suivants: 
\cite{Mus01}, \cite{LaMu09}, 
\cite{DeLo99} y \cite{DelO02}.
Les espaces d'arcs et les espaces de m-jets ont \'et\'e  utilis\'es  aussi pour solutionner certains probl\`emes 
qui ont \'et\'e consid\'er\'es comme des probl\`emes difficiles \`a r\'esoudre,  par exemple,  
la Conjecture de Batyrev sur les vari\'et\'es de Calabi-Yau (voir \cite{Kon95}).

Dans le but de mieux comprendre la relation entre la vari\'et\'e $V$ et les espaces $V_{m}$, $1\leq m \leq \infty$,
il est naturel de  se poser la question suivante: Est-ce qu'une d\'eformation de la vari\'et\'e $V$ induit une d\'eformation 
(compatible) des espaces de m-jets $V_m$ et de l'espace d'arcs $V_{\infty}$?
L'examen de cette question est le sujet de la premi\`ere moiti\'e de la section \ref{sec:Fam-mJ-arc}.

Dans la deuxi\`eme moiti\'e de cette section, on aborde et on donne quelques r\'esultats partiels au probl\`eme suivant:
si on consid\`ere une d\'eformation d'une d\'esingularization  
$\pi:X\rightarrow V$ (en pr\'eservant la condition d'\^etre une d\'esingularization)  qui induit une d\'eformations de la fibre exceptionnelle
$\pi^{-1}(\sing V)$, comment variera l'image de l'application de 
Nash $\mathcal{N}_{V}$?
 Plus pr\'ecis\'ement, si $\E$ est un diviseur  essentiel (ou tr\`es essentiel) sur $V$, est-ce que 
 la ``d\'eformation'' de $\E$ sera un diviseur  essentiel (ou tr\`es essentiel) 
 sur la ``d\'eformation'' de $V$? 
 (pour plus de pr\'ecisions et de d\'etails voir la section \ref{ssec:Fam-apl_nash}).\\

Dans la section \ref{sec:Deux-fam-exemp-hyp}, on consid\`ere deux familles d'hypersurfaces normales de $\AF_{\KK}^4$ et
on montre que pour chaque hypersurface consid\'er\'ee, l'application de Nash qui y est associ\'ee est bijective.
De plus, dans chaque cas on donne explicitement  une  d\'esingularizations o\`u tous les composantes irr\'eductibles de la fibre exceptionnelle
sont des diviseurs essentiels. \\

Dans la Section \ref{se:Pre-Rap} on fait quelques rappels sur les d\'erivations de Hasse-Schimidt, l'espaces de $m$-jets, l'espace d'arcs,
le probl\`eme de Nash, les vari\'et\'es toriques, et les r\'esolutions de singularit\'es d'hypersurfaces.

\section*{Remerciements} Je tiens \`a remercier Mark Spivakovsky pour les nombreuses discussions et son constant encouragement.
Je remercie \'egalement Antonio Campillo pour l'int\'er\^et qu'il a port\'e \`a mon travail. 

\section{Pr\'eliminaires et Rappels}
\label{se:Pre-Rap}
\subsection{D\'erivations de Hasse-Schmidt}
\label{ssec:h-s}
Dans cette section, on donne quelques r\'esultats et notions de base sur les d\'erivations de  Hasse-Schmidt, 
l'espace de $m$-jet relatif et l'espace d'arcs relatif. 
Notre but ici n'est pas de donner un expos\'e exhaustif de la th\'eorie, mais plut\^ot de donner un aper\c{c}u de celle-ci dans le cadre qui nous int\'eresse.
Pour plus d\'etails, voir \cite{Voj07}.\\

Soit $\KK$ un corps alg\'ebriquement clos de caract\'eristique nulle, 
$\s$ une $\KK$-alg\`ebre et  $\R$, $\Q$ deux $\s$-alg\`ebre.\\

Une {\it d\'erivation de  Hasse-Schmidt   d'ordre}  $m\in \ZZ_{\geq 0}$ de $\R$ vers $\Q$ est un $m\!+\!1$-uple  $(D_0,\cdots, D_m)$, o\`u   $D_0:R\rightarrow Q$  est un homomorphisme  
de $\s$-alg\`ebre et  $D_i:R\rightarrow Q$, $1\leq i\leq m$, est  un homomorphisme de groupes ab\'eliens, qui satisfait les  propri\'et\'es suivantes:
\begin{enumerate}

 \item  $D_i(p)=0$ pour tout $p\in \s$ et pour tout $1\leq i\leq m$;
 \item  Pour tous les \'el\'ements $x$ et  $y$  de $\R$ et pour tout entier $1\leq k \leq m$, on a:
 \begin{center}
  $D_k(xy)=\sum_{i+j=k}D_i(x)D_j(y)$.
   \end{center}
 \end{enumerate}
 \vspace{0.1cm}
 
 On remarque que si $\phi:Q\rightarrow Q'$ est un homomorphisme  de $\s$-alg\`ebre et $(D_0,\cdots, D_m)$ 
 une d\'erivation de Hasse-Schmidt de $\R$ vers $\Q$, alors $(\phi\circ D_0,\cdots, \phi \circ D_m)$
 est une  d\'erivation de Hasse-Schmidt de $\R$ vers $\Q'$. Par cons\'equent, on peut d\'efinir le foncteur suivant:
  \vspace{0.3cm}
 \begin{center}
  $\DerR{m}{\;\cdot\;}:\salg\rightarrow \ens;$ $\Q\mapsto \DerR{m}{\Q}$
 \end{center}
  \vspace{0.3cm}
  o\`u $\salg$  (resp. $\ens$) est la cat\'egorie des $\s$-alg\`ebres (resp. des ensembles) et $\DerR{m}{\Q}$ est l'ensemble des 
  d\'erivations de Hasse-Schmidt d'ordre $m$ de $\R$ vers $\Q$.\\

 De la m\^eme fa\c{c}on, on peut d\'efinir la d\'erivation de Hasse-Schmidt   
 d'ordre $\infty$ et le foncteur $\DerR{\infty}{\;\cdot\;}$ en utilisant, 
 \`a la place du  $m\!+\!1$-uple  $(D_0,\cdots, D_m)$,
 une suite infinie $D_0,D_1,\cdots $.\\
 
On remarque que le foncteur des  d\'erivations de Hasse-Schmidt   d'ordre $1$ $\DerR{1}{\;\cdot\;}$  est le foncteur de 
d\'erivations de K\"ahler $\operatorname{Der}_{\s}(\R,\;\cdot\;)$. Il est bien connu que ce foncteur 
est repr\'esentable par l'alg\`ebre sym\'etrique  du $\R$-module des diff\'erentiels de K\"ahler , 
c'est-\`a-dire:

\begin{center}
 $\operatorname{Der}_{\s}(\R,\;\cdot\;)\cong \Hom_{\s}(\sym(\Omega_{\R/\s}), \;\cdot\;)$,
\end{center}
La proposition suivante est une g\'en\'eralisation du r\'esultat ci-dessus.\\

Soit $\HS{m}$ la $\R$-alg\`ebre quotient de l'alg\`ebre 
$\sym( \displaystyle\mathop{\oplus}\limits_ {i=1}^{m} \displaystyle\mathop{\oplus}\limits_{ x\in R} \R d^{i}x)$ par l'id\'eal $I$ 
engendr\'e par la r\'eunion des ensembles $r_1$, $r_2$ et $r_3$, o\`u
$r_1:=\{d^i(x+y)-d^i(x)-d^i(y)\mid x,y\in \R; \; 1\leq  i\leq  m\},$
 $r_2:=\{d^i(s)\mid s \in \s; \; 1\leq i\leq  m\}$ et
 $r_3:=\{d^k(xy)-\sum_{i+j=k}d^i(x)d^i(y)\mid x,y\in \R; \; 1\leq k\leq  m\}$.\\

 On remarque que  $\HS{0}= \R$ et que  $\HS{1}= \sym(\Omega_{\R/\s})$, par contre, dans le cas $m\geq 2$,  l'alg\`ebre $\HS{m}$ devient 
 beaucoup plus difficile \`a comprendre.

On remarque aussi que $\HS{m}$ est une alg\`ebre gradu\'ee par la graduation $d^ix\mapsto i$. D'apr\`es la d\'efinition de 
l'alg\`ebre $\HS{m}$, on obtient, de fa\c{c}on naturelle, une suite d'homomorphismes d'alg\`ebres gradu\'ees:
\vspace{0.3cm}
\begin{center}
 
$\HS{0}\rightarrow \HS{1}\cdots\rightarrow\HS{m}\rightarrow \HS{m+1}\rightarrow \cdots$
\end{center}
\vspace{0.3cm}

On note  $\HS{\infty }$, la limite inductive du syst\`eme inductif $\HS{m}$, c'est-\`a-dire:
\vspace{0.3cm}
\begin{center}
 $\HS{\infty }:=\displaystyle\mathop{\lim}\limits_{m\rightarrow}\HS{m}$.
\end{center}
 
\vspace{0.2cm}
\begin{proposition}[\cite{Voj07}] \label{pro:HS-repre} L'alg\`ebre de Hasse-Schmidt $\HS{m}$, $0\leq m\leq \infty$, repr\'esente le foncteur  de  d\'erivations de Hasse-Schmidt   $\DerR{m}{\;\cdot\;}$, c'est \`a dire
\vspace{0.3cm}
\begin{center}
 $\DerR{m}{\Q}\cong \Hom_{\s}(\HS{m},Q)$,
\end{center}
\vspace{0.3cm}
o\`u $\Q$ est une $\s$-alg\`ebre quelconque. 
 
\end{proposition}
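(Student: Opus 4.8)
Le plan est d'\'etablir la propri\'et\'e universelle de mani\`ere directe, en traduisant une \`a une les relations d\'efinissant $\HS{m}$ en les axiomes d'une d\'erivation de Hasse-Schmidt (annulation sur $\s$ et r\`egle de Leibniz). On traiterait d'abord le cas $m<\infty$, le cas $m=\infty$ s'en d\'eduisant formellement : puisque $\HS{\infty}=\varinjlim_m\HS{m}$ et qu'une suite infinie $D_0,D_1,\dots$ n'est rien d'autre qu'une famille compatible de ses troncatures d'ordre fini, il suffira de v\'erifier que les bijections des ordres finis sont compatibles aux applications de transition, puis d'invoquer la propri\'et\'e universelle de la limite inductive : $\Hom_{\s}(\HS{\infty},\Q)\cong\varprojlim_m\Hom_{\s}(\HS{m},\Q)\cong\varprojlim_m\DerR{m}{\Q}=\DerR{\infty}{\Q}$. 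Fixons donc $m<\infty$ et une $\s$-alg\`ebre $\Q$. On pose $M:=\bigoplus_{i=1}^{m}\bigoplus_{x\in\R}\R\,d^ix$, le $\R$-module libre de base les symboles $d^ix$, de sorte que $\HS{m}$ est le quotient de l'alg\`ebre sym\'etrique $\sym(M)$ (sur $\R$) par l'id\'eal $I$ engendr\'e par $r_1,r_2,r_3$ ; en particulier $\HS{m}$ est une $\R$-alg\`ebre.

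La premi\`ere \'etape serait de d\'ecomposer les deux membres de l'isomorphisme cherch\'e selon leur terme d'ordre $0$. La restriction le long de $\R\to\HS{m}$ fournit $\Hom_{\s}(\HS{m},\Q)\to\Hom_{\s}(\R,\Q)$, $\phi\mapsto\phi|_{\R}$, dont la fibre au-dessus de $D_0$ s'identifie \`a $\Hom_{\R}(\HS{m},\Q_{D_0})$, o\`u $\Q_{D_0}$ est $\Q$ munie de la structure de $\R$-alg\`ebre induite par $D_0$ ; de m\^eme, $\DerR{m}{\Q}\to\Hom_{\s}(\R,\Q)$, $(D_0,\dots,D_m)\mapsto D_0$, a pour fibre au-dessus de $D_0$ l'ensemble des $(D_1,\dots,D_m)$ satisfaisant, relativement \`a $D_0$, l'annulation sur $\s$ et la r\`egle de Leibniz. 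On est ainsi ramen\'e, pour chaque $D_0$, \`a une bijection entre ces deux fibres, naturelle en $\Q$, ne mettant en jeu que des homomorphismes de $\R$-alg\`ebres. Dans un sens : $M$ \'etant libre, la propri\'et\'e universelle de $M$ puis celle de $\sym(M)$ identifient un homomorphisme de $\R$-alg\`ebres $\sym(M)\to\Q_{D_0}$ \`a la donn\'ee d'un \'el\'ement de $\Q$ pour chaque $d^ix$ ; partant de $(D_1,\dots,D_m)$, on d\'efinirait $\widetilde\phi:\sym(M)\to\Q_{D_0}$ par $d^ix\mapsto D_i(x)$, et l'on v\'erifierait que $\widetilde\phi$ s'annule sur les g\'en\'erateurs de $I$ (sur $r_1$ par additivit\'e des $D_i$, sur $r_2$ par annulation sur $\s$, sur $r_3$ par la r\`egle de Leibniz, en notant que la $\R$-lin\'earit\'e envoie $d^0x=x$ sur $D_0(x)$), d'o\`u une factorisation $\phi:\HS{m}\to\Q_{D_0}$. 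Dans l'autre sens : \`a $\phi$ on associerait $D_i(x):=\phi(\overline{d^ix})$, les relations $r_1,r_2,r_3$ (valables dans $\HS{m}$) donnant respectivement l'additivit\'e des $D_i$, leur annulation sur $\s$, et la r\`egle de Leibniz relativement \`a $D_0:=\phi|_{\R}$. Ces deux proc\'ed\'es sont mutuellement inverses car $\HS{m}$ est engendr\'ee comme $\R$-alg\`ebre par les classes des $d^ix$ et chaque composition pr\'eserve les valeurs sur ces g\'en\'erateurs, et la naturalit\'e en $\Q$ d\'ecoule de la remarque pr\'ec\'edant l'\'enonc\'e ($(\psi\circ D_0,\dots,\psi\circ D_m)$ correspondant \`a $\psi\circ\phi$). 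Le cas $m=0$ est trivial puisque $\HS{0}=\R$.

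Il resterait alors \`a ex\'ecuter le passage \`a la limite d\'ej\`a annonc\'e pour $m=\infty$ : on v\'erifierait que les bijections obtenues aux ordres finis sont compatibles aux fl\`eches de transition $\HS{m}\to\HS{m+1}$ et aux troncatures $\DerR{m+1}{\Q}\to\DerR{m}{\Q}$ — ce qui se lit imm\'ediatement sur les g\'en\'erateurs $d^ix$ — puis on conclurait par les propri\'et\'es universelles des limites inductive et projective. L'ensemble est fonctoriel en $\Q$.

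On ne s'attend pas ici \`a un v\'eritable obstacle, l'argument \'etant essentiellement formel. Le seul point d\'elicat concerne la gestion des structures d'alg\`ebre — ne pas confondre homomorphismes de $\s$-alg\`ebres et de $\R$-alg\`ebres, d'o\`u le d\'ecoupage en fibres au-dessus de $D_0$ — et la v\'erification que l'id\'eal $I$ est \`a la fois assez gros pour forcer les axiomes et assez petit pour qu'aucune relation superflue ne soit impos\'ee \`a $\widetilde\phi$, ce dont t\'emoigne exactement le calcul $\widetilde\phi(I)=0$.
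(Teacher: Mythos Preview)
Le papier ne donne pas de d\'emonstration de cette proposition : elle est simplement attribu\'ee \`a \cite{Voj07} et \'enonc\'ee sans preuve. Votre argument est correct et constitue la v\'erification directe standard de la propri\'et\'e universelle --- c'est d'ailleurs essentiellement celle que l'on trouve chez Vojta. Le d\'ecoupage en fibres au-dessus de $D_0$ pour s\'eparer la structure de $\R$-alg\`ebre de celle de $\s$-alg\`ebre est pertinent, la correspondance entre les g\'en\'erateurs $r_1,r_2,r_3$ de l'id\'eal $I$ et les axiomes (additivit\'e, annulation sur $\s$, r\`egle de Leibniz) est exacte, et le passage \`a la limite pour $m=\infty$ via $\Hom_{\s}(\varinjlim\HS{m},\Q)\cong\varprojlim\Hom_{\s}(\HS{m},\Q)$ est l\'egitime. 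Rien \`a redire.
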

L'alg\`ebre de Hasse-Schimidt $\HS{m}$, $0\leq m\leq \infty$  satisfait les  propri\'et\'es de localisation suivantes:
\begin{proposition}[\cite{Voj07}]
Soient  $0\leq m\leq \infty$  et  $L$ (resp. $L_1$) un sous-ensemble multiplicatif de l'alg\`ebre $R$ (resp. $\s$). Alors, on a: 
\begin{enumerate}
 \item $\HS{m}[L^{\;-1}]\cong \HSF{m}{R[L^{\;-1}]}{\s}$;
 \item si le morphisme $\s\rightarrow R$ se factorise par le morphisme canonique $\s\rightarrow \s[L_1^{\;-1}]$, alors 
 $\HS{m}\cong \HSF{m}{R}{\s[L_1^{\;-1}]}$
\end{enumerate}
\end{proposition}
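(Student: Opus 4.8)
Le plan est d'\'eviter la manipulation directe de la pr\'esentation par g\'en\'erateurs et relations de $\HS{m}$ et d'invoquer plut\^ot la caract\'erisation fonctorielle de la Proposition~\ref{pro:HS-repre} avec le lemme de Yoneda: pour chacun des deux isomorphismes, il suffira de v\'erifier que les deux alg\`ebres en pr\'esence repr\'esentent le m\^eme foncteur sur une cat\'egorie d'alg\`ebres ad\'equate. L'ingr\'edient technique central est une \emph{r\`egle de d\'erivation de l'inverse}: si $(D_0,\dots,D_m)$ est une d\'erivation de Hasse-Schmidt de $\R$ vers une $\s$-alg\`ebre $\Q$ et si $x\in\R$ v\'erifie $D_0(x)\in\Q^{\times}$, alors $(D_0,\dots,D_m)$ se prolonge de fa\c{c}on unique en une d\'erivation de Hasse-Schmidt de la localis\'ee $\R[x^{-1}]$ vers $\Q$. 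En effet $D_0$ se prolonge par la propri\'et\'e universelle de la localisation, et l'identit\'e de Leibniz appliqu\'ee \`a $x\cdot(r/x^{k})=r/x^{k-1}$ fournit, en chaque degr\'e, un syst\`eme lin\'eaire triangulaire de d\'eterminant $D_0(x)$ qui impose la valeur des $D_i(r/x^{k})$; on v\'erifie ensuite par un calcul formel que les formules obtenues d\'efinissent bien une d\'erivation de Hasse-Schmidt. Un cas particulier servira plus loin: si de surcro\^it $x$ est inversible dans $\R$ et $D_i(x)=0$ pour $1\le i\le m$ (par exemple si $x$ provient de $\s$), alors n\'ecessairement $D_i(x^{-1})=0$ pour $1\le i\le m$.

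Pour~(1), on applique ceci \`a chaque $x\in L$: pour toute $\s$-alg\`ebre $\Q$, l'ensemble $\operatorname{Der}^{m}_{\s}(\R[L^{-1}],\Q)$ s'identifie, naturellement en $\Q$, au sous-ensemble de $\DerR{m}{\Q}$ form\'e des $(D_0,\dots,D_m)$ tels que $D_0(L)\subseteq\Q^{\times}$. D'autre part, puisque $L\subseteq\R=\HS{0}\subseteq\HS{m}$ et que $\HS{m}[L^{-1}]$ est la localis\'ee de l'alg\`ebre $\HS{m}$ en $L$, l'ensemble $\Hom_{\s}(\HS{m}[L^{-1}],\Q)$ s'identifie au sous-ensemble de $\Hom_{\s}(\HS{m},\Q)$ form\'e des morphismes envoyant $L$ sur des \'el\'ements inversibles. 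Or, sous l'isomorphisme de la Proposition~\ref{pro:HS-repre}, un morphisme $f$ correspond \`a la d\'erivation $D_i(x)=f(d^ix)$, dont la composante de degr\'e z\'ero est $D_0(x)=f(x)$ pour $x\in\R$; la condition sur $f$ \'equivaut donc \`a $D_0(L)\subseteq\Q^{\times}$. On obtient ainsi un isomorphisme de foncteurs
\[
\Hom_{\s}(\HS{m}[L^{-1}],\Q)\ \cong\ \operatorname{Der}^{m}_{\s}(\R[L^{-1}],\Q)\ \cong\ \Hom_{\s}(\HSF{m}{\R[L^{-1}]}{\s},\Q),
\]
d'o\`u l'isomorphisme voulu par Yoneda (compatible aux structures de $\R[L^{-1}]$-alg\`ebre, celles-ci \'etant lues sur la composante de degr\'e z\'ero).

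Pour~(2), on se place dans la cat\'egorie des $\s[L_1^{-1}]$-alg\`ebres. Soit $\Q$ une telle alg\`ebre et $(D_0,\dots,D_m)$ une d\'erivation de Hasse-Schmidt de $\R$ vers $\Q$ au-dessus de $\s$. Comme $\s\to\R$ se factorise par $\s\to\s[L_1^{-1}]$, tout $b\in L_1$ a une image inversible dans $\R$ avec $D_i(b)=0$ pour $i\ge1$; le cas particulier ci-dessus donne $D_i(b^{-1})=0$ pour $i\ge1$, puis, \`a l'aide de l'additivit\'e et de Leibniz, $D_i(s)=0$ pour tout $s\in\s[L_1^{-1}]$ et tout $i\ge1$. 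Ainsi, sur les $\s[L_1^{-1}]$-alg\`ebres, les foncteurs $\operatorname{Der}^{m}_{\s}(\R,-)$ et $\operatorname{Der}^{m}_{\s[L_1^{-1}]}(\R,-)$ co\"incident. Comme $\HS{m}$, en tant que $\R$-alg\`ebre, est une $\s[L_1^{-1}]$-alg\`ebre, et comme $\HSF{m}{\R}{\s[L_1^{-1}]}$ repr\'esente le second foncteur tandis que $\HS{m}$ repr\'esente le premier, le lemme de Yoneda appliqu\'e dans les $\s[L_1^{-1}]$-alg\`ebres fournit l'isomorphisme annonc\'e. Le cas $m=\infty$ se d\'eduit des cas finis par passage \`a la limite inductive, la localisation et les foncteurs $\Hom$ en jeu commutant aux colimites filtrantes.

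L'obstacle principal est la v\'erification que les formules issues du syst\`eme triangulaire d\'efinissent r\'eellement une d\'erivation de Hasse-Schmidt (compatibilit\'e avec l'additivit\'e et la r\`egle de Leibniz): c'est un calcul formel un peu long, sans difficult\'e conceptuelle, mais qui constitue le c\oe ur de la partie ``surjectivit\'e'' de l'argument. Une variante plus \'el\'ementaire, commode pour~(2) et r\'ealisable avec un peu plus de soin pour~(1), consiste \`a exhiber le morphisme naturel surjectif $\HS{m}\twoheadrightarrow\HSF{m}{\R}{\s[L_1^{-1}]}$ et \`a prouver son injectivit\'e en montrant que les relations suppl\'ementaires du but, du type $d^i(b^{-1})$ avec $b\in L_1$, sont d\'ej\`a cons\'equences des relations $r_1,r_2,r_3$ dans $\HS{m}$, via l'\'egalit\'e $d^k(bb^{-1})=\sum_{i+j=k}d^i(b)d^j(b^{-1})$ jointe \`a $bb^{-1}=1$.
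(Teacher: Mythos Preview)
Le papier ne donne pas de d\'emonstration propre: la proposition est simplement cit\'ee de~\cite{Voj07}. Ta d\'emonstration par repr\'esentabilit\'e et Yoneda est correcte et constitue l'approche standard.

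Une seule suggestion de simplification. Tu identifies comme ``obstacle principal'' la v\'erification que les formules issues du syst\`eme triangulaire d\'efinissent bien une d\'erivation de Hasse-Schmidt, et tu la qualifies de ``calcul formel un peu long''. Ce calcul est \'evitable: au lieu d'invoquer seulement la Proposition~\ref{pro:HS-repre}, utilise aussi la Proposition~\ref{pro:fonct-repre-mjet}, qui identifie $\DerR{m}{\Q}$ \`a $\Hom_{\s}(\R,\Q[t]/(t^{m+1}))$ pour $m<\infty$ (et \`a $\Hom_{\s}(\R,\Q[[t]])$ pour $m=\infty$). Un \'el\'ement $a_0+a_1t+\cdots$ de $\Q[t]/(t^{m+1})$ ou de $\Q[[t]]$ est inversible si et seulement si $a_0\in\Q^{\times}$; la propri\'et\'e universelle de la localisation donne donc imm\'ediatement, sans aucun calcul, que $\Hom_{\s}(\R[L^{-1}],\Q[t]/(t^{m+1}))$ s'identifie au sous-ensemble de $\Hom_{\s}(\R,\Q[t]/(t^{m+1}))$ o\`u $D_0(L)\subseteq\Q^{\times}$. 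Cela absorbe \`a la fois l'existence et l'unicit\'e du prolongement, ainsi que le cas particulier $D_i(b^{-1})=0$ dont tu as besoin pour~(2). Le reste de ton argument est inchang\'e.
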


Soit $f:X\rightarrow Y$   un morphisme de sch\'emas.  En prenant un recouvrement par des ouverts affines de $X$  
compatible avec un recouvrement par des ouverts affines de $Y$ et en utilisant  les propri\'et\'es de localisation ci-dessus,
on peut d\'efinir un faisceau quasi-coh\'erent, $\HSF{m}{\mathcal O_{X}}{\mathcal{O}_{Y}}$, $1\leq m\leq \infty$,
de $\mathcal{O}_X$-Alg\`ebres qui ne d\'epend pas des recouvrements choisis.

\begin{definition}
 Pour chaque $0\leq m\leq \infty$, on pose:
 \begin{center}
 $\mJet{X}{Y}{m}:=\specrel \HSF{m}{\mathcal O_{X}}{\mathcal{O}_{Y}}$
 \end{center}
o\`u $\specrel \HSF{m}{\mathcal O_{X}}{\mathcal{O}_{Y}}$  est le spectre relatif 
du faisceau quasi-coh\'erent de  $\mathcal O_{X}$-Alg\`ebres  $\HSF{m}{\mathcal O_{X}}{\mathcal{O}_{Y}}$.
Si $Y:=\spec \KK$, on pose $X_{m}:=\mJet{X}{\spec \KK}{m}$.
\vspace{0.2cm}

Pour $0\leq m < \infty$ (resp. $m=\infty$), le sch\'ema $\mJet{X}{Y}{m}$ (resp. $\arcSpace{X}{Y}$) est appel\'e
l'espace de $m$-jets (resp. d'arcs) du morphisme $f:X\rightarrow Y$.

\end{definition}
 Les r\'esultats suivants justifient la terminologie ``Espace de $m$-jet et Espace d'arc du morphisme 
 $f:X\rightarrow Y$''.
\vspace{0.2cm}

On remarque qu'on peut d\'efinir les applications suivantes:
\vspace{0.5cm}
\begin{center}
 $\begin{array}{ccl}
 \DerR{m}{\Q}&\rightarrow & \Hom_{\s}(R,Q[t]/(t^{m+1}));\\
 
 (D_0,...,D_m)&\rightarrow & \left\{\begin{array}{l} 
 R\rightarrow Q[t]/(t^{m+1}); \\ x\mapsto D_0(x)+\cdots+D_m(x)t^m\end{array} \right.
                                                                                                      
\end{array}$
\end{center}
\vspace{0.5cm}

\begin{center}

$\begin{array}{ccl}
 \DerR{\infty}{\Q}&\rightarrow & \Hom_{\s}(R,Q[[t]]);\\
 
 D_0,D_1\cdots &\rightarrow & \left\{\begin{array}{l} 
 R\rightarrow Q[[t]]; \\ x\mapsto D_0(x)+D_1(x)t+\cdots\end{array} \right.
                                                                                                      
\end{array}$
\end{center}
En utilisant les applications ci-dessus, on peut d\'emontrer le r\'esultat  suivant
\begin{proposition}[\cite{Voj07}]
\label{pro:fonct-repre-mjet}
 Le foncteur $Q\mapsto \DerR{m}{Q}$, $0\!\leq\!  m\!< \!\infty$, (resp. $Q \mapsto \DerR{\infty}{Q}$) 
 est isomorphe au foncteur  $Q\mapsto \Hom_{\s}(R,Q[t]/(t^{m+1}))$ 
 (resp.   $Q\mapsto \Hom_{\s}(R,Q[[t]])$).
\end{proposition}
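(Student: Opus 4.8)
The plan is to show that the two maps displayed just above the statement are mutually inverse bijections, natural in $Q$, and then to read off the isomorphism of functors; the case $m=\infty$ will follow by the same argument with infinite sequences, or by passing to the inverse limit over $m$.

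First I would fix a $\s$-algebra $Q$ and verify that, given a Hasse-Schmidt derivation $(D_0,\dots,D_m)$ of $\R$ towards $Q$, the map $\psi\colon\R\to Q[t]/(t^{m+1})$ defined by $\psi(x)=\sum_{i=0}^{m}D_i(x)t^i$ is indeed a homomorphism of $\s$-algebras. Additivity of $\psi$ is immediate from the additivity of each $D_i$; the relation $\psi(1)=1$ comes from $D_0(1)=1$ and $D_i(1)=0$ for $i\ge 1$; the fact that $\psi$ is $\s$-linear comes from $D_0$ being a $\s$-algebra homomorphism together with $D_i(p)=0$ for $p\in\s$ and $1\le i\le m$; and multiplicativity is exactly the Leibniz rule, since in $Q[t]/(t^{m+1})$ one has
\[
\psi(x)\psi(y)=\sum_{k=0}^{m}\Big(\sum_{i+j=k}D_i(x)D_j(y)\Big)t^k=\sum_{k=0}^{m}D_k(xy)\,t^k=\psi(xy).
\]
Conversely, given $\psi\in\Hom_{\s}(\R,Q[t]/(t^{m+1}))$, I would define $D_i(x)\in Q$ as the coefficient of $t^i$ in $\psi(x)$: additivity of $\psi$ gives additivity of each $D_i$, reduction modulo $t$ shows that $D_0$ is a $\s$-algebra homomorphism, the $\s$-linearity of $\psi$ forces $D_i(p)=0$ for $p\in\s$ and $i\ge 1$, and comparing coefficients of $t^k$ in $\psi(xy)=\psi(x)\psi(y)$ yields the Leibniz rule. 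These two constructions are visibly inverse to one another.

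Next I would check naturality in $Q$: for a homomorphism of $\s$-algebras $\varphi\colon Q\to Q'$, the induced map on Hasse-Schmidt derivations is $(D_i)_i\mapsto(\varphi\circ D_i)_i$ (as noted just after the definition of Hasse-Schmidt derivation), while the induced map on $\Hom_{\s}(\R,\,\cdot\,[t]/(t^{m+1}))$ is composition with $\varphi[t]\colon Q[t]/(t^{m+1})\to Q'[t]/(t^{m+1})$; since $\varphi[t]\bigl(\sum_i D_i(x)t^i\bigr)=\sum_i\varphi(D_i(x))t^i$, the relevant square commutes, and we obtain the claimed isomorphism of functors $\DerR{m}{\,\cdot\,}\cong\Hom_{\s}(\R,\,\cdot\,[t]/(t^{m+1}))$.

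Finally, for $m=\infty$ the same computation applies verbatim with the infinite sequence $D_0,D_1,\dots$ and the power series ring $Q[[t]]$ in place of $Q[t]/(t^{m+1})$ --- no truncation occurs, so the Leibniz identity is literally the Cauchy product formula --- and the bijectivity and naturality verifications are unchanged; alternatively one invokes $\DerR{\infty}{Q}=\varprojlim_m\DerR{m}{Q}$ and $\Hom_{\s}(\R,Q[[t]])=\varprojlim_m\Hom_{\s}(\R,Q[t]/(t^{m+1}))$ and observes that the finite-level isomorphisms are compatible with the transition maps. I do not expect a genuine obstacle here: the whole content is the dictionary between axioms (1)--(2) for Hasse-Schmidt derivations and the axioms for a $\s$-algebra homomorphism into a truncated polynomial (or power series) ring, and the only mildly delicate point is confirming that the bijection is natural rather than merely pointwise, which is the short commuting-square argument above.
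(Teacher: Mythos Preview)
Your proof is correct and follows exactly the approach indicated in the paper: the paper does not give its own proof of this proposition but simply cites \cite{Voj07} and remarks that the displayed maps just above the statement can be used, which is precisely what you have verified in detail (well-definedness, inverse bijections, naturality in $Q$, and the $m=\infty$ case).
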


Les th\'eor\`emes suivants sont une cons\'equence des Propositions \ref{pro:HS-repre} et \ref{pro:fonct-repre-mjet}
\begin{theoreme} \label{th:prop-fonct-mjet} Soit $f:X\rightarrow Y$ un morphisme de sch\'emas. Alors,
l'espace de $m$-jet $\mJet{X}{Y}{m}$  de $f:X\rightarrow Y$ repr\'esente le foncteur 
de $m$-jets, c'est-\`a-dire

\begin{center}
$\Hom_Y(Z\times_{\KK}\spec \KK[t]/(t^{m+1}), X)\cong \Hom_{Y}(Z,\mJet{X}{Y}{m})$, 

\end{center}
o\`u $Z$ est un $Y$-Sch\'ema quelconque.
\end{theoreme}

\begin{theoreme}\label{th:prop-fonct-arc} Soit $f:X\rightarrow Y$ un morphisme de sch\'emas. Alors, l'espace d'arcs  $\arcSpace{X}{Y}$ de 
$f:X\rightarrow Y$ repr\'esente le foncteur  d'arcs, c'est-\`a-dire

\begin{center}
 $\Hom_Y(Z\widehat{\times}_{\KK}\spec \KK[[t]], X)\cong \Hom_{Y}(Z,\arcSpace{X}{Y})$
\end{center}
o\`u $Z$ est un $Y$-Sch\'ema quelconque et $Z\widehat{\times}_{\KK}\spec \KK[[t]]$ est le compl\'et\'e formel  du sch\'ema $ Z\times_{\KK}\spec \KK[[t]] $  le long du sous-sch\'ema 
 $ Z\times_{\KK}\spec{\KK} $.
\end{theoreme}

Les espaces de $m$-jets et l'espace d'arcs d'un morphisme satisfont la propri\'et\'e suivante:
\begin{proposition}[\cite{Voj07}]
 \label{pr:mjets-chang-base}
 Soient $f:X\rightarrow Y$  et $Y'\rightarrow Y$ deux morphisme de sch\'emas, et on pose $X':=X\times_Y Y'$. Alors,
 $\mJet{X'}{Y'}{m}\cong  \mJet{X}{Y}{m}\times_YY'$, sur $Y'$ pour tout $0 \leq m\leq \infty$ En particulier, 
 il existe un diagramme  commutatif:
 
 \begin{center}
\begin{tabular}{lr}

 $\shorthandoff{;:!?} 
\xymatrix @!0 @R=1.5pc @C=2pc{  
\mJet{X'}{Y'}{m}\ar@{->}[rrrr] \ar@{->}[dd]&& & 
& \mJet{X}{Y}{m} \ar@{->}[dd]\\
   &&\square&& \\
  Y' \ar@{->}[rrrr] &&& &  Y 
}$   
\end{tabular}
\end{center}
\end{proposition}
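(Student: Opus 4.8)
The plan is to reduce the statement to a formal manipulation of the functors represented by the $m$-jet and arc spaces, using Theorems \ref{th:prop-fonct-mjet} and \ref{th:prop-fonct-arc} together with Yoneda's lemma; no genuinely new ingredient is required. Fix first an integer $0\leq m<\infty$ and let $Z$ be an arbitrary $Y'$-scheme. Applying Theorem \ref{th:prop-fonct-mjet} to the morphism $X'\to Y'$ gives a natural bijection
\[
\Hom_{Y'}\bigl(Z,\mJet{X'}{Y'}{m}\bigr)\;\cong\;\Hom_{Y'}\bigl(Z\times_{\KK}\spec\KK[t]/(t^{m+1}),\,X'\bigr).
\]
Now $W:=Z\times_{\KK}\spec\KK[t]/(t^{m+1})$ is a $Y'$-scheme via its projection to $Z$, and $X'=X\times_Y Y'$; hence the universal property of the fibre product identifies the right-hand side with $\Hom_Y(W,X)$, where $W$ is regarded as a $Y$-scheme through $Z\to Y'\to Y$.

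Next I would apply Theorem \ref{th:prop-fonct-mjet} once more, this time to $f:X\to Y$ and with $Z$ viewed as a $Y$-scheme, obtaining $\Hom_Y(W,X)\cong\Hom_Y(Z,\mJet{X}{Y}{m})$. A second use of the universal property of the fibre product (note that $Z$ comes equipped with a fixed map to $Y'$) identifies the latter set with $\Hom_{Y'}(Z,\mJet{X}{Y}{m}\times_Y Y')$. Composing the whole chain of natural bijections shows that $\mJet{X'}{Y'}{m}$ and $\mJet{X}{Y}{m}\times_Y Y'$ represent the same functor on the category of $Y'$-schemes, hence are canonically isomorphic over $Y'$ by Yoneda. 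The commutative square of the statement is then read off from the structure morphisms: the left vertical arrow is the projection of the relative spectrum $\mJet{X'}{Y'}{m}\to Y'$, and under the isomorphism just obtained the top horizontal arrow becomes the first projection $\mJet{X}{Y}{m}\times_Y Y'\to\mJet{X}{Y}{m}$, so the square commutes by the very construction of the fibre product.

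For $m=\infty$ I would run exactly the same argument, using Theorem \ref{th:prop-fonct-arc} in place of Theorem \ref{th:prop-fonct-mjet} and replacing $W$ by the formal completion $Z\widehat{\times}_{\KK}\spec\KK[[t]]$ of $Z\times_{\KK}\spec\KK[[t]]$ along $Z\times_{\KK}\spec\KK$. The only point deserving a word of justification is that this formal completion is still naturally a $Y'$-scheme — it maps to $Y'$ through the closed subscheme $Z$ — so that the universal property of $X'=X\times_Y Y'$ continues to apply; granting this, the same composition of natural bijections goes through verbatim.

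If one preferred to avoid the functorial language, I would instead establish the isomorphism locally: for a ring homomorphism $P\to P'$, a $P$-algebra $R$, and $R':=R\otimes_P P'$, one checks directly from the presentation of $\HSF{m}{R}{P}$ by the generators $d^ix$ and the relations $r_1,r_2,r_3$ that the obvious $R'$-algebra map $\HSF{m}{R}{P}\otimes_R R'\to\HSF{m}{R'}{P'}$ is an isomorphism, and then glues this identification over compatible affine charts of $X$, $Y$, $Y'$. In either approach the real content is only bookkeeping: keeping the various $Y$- and $Y'$-scheme structures straight, and verifying that all the identifications respect the gradings and the transition maps $\HSF{m}{R}{P}\to\HSF{m+1}{R}{P}$, so that the case $m=\infty$ follows by passing to the inductive limit. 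I expect this verification to be routine, which is why the finite-$m$ functorial argument above seems the cleanest route, with the $m=\infty$ completion subtlety the only place where a little care is genuinely needed.
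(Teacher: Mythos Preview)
The paper does not give its own proof of this proposition: it is stated with the attribution \cite{Voj07} and no argument is supplied. Your functorial argument via Theorems~\ref{th:prop-fonct-mjet} and~\ref{th:prop-fonct-arc} together with Yoneda is correct and is the standard way to derive this base-change compatibility; the alternative local computation with the presentation of $\HSF{m}{R}{P}$ that you sketch is essentially how Vojta proceeds in the cited reference, so either route is acceptable here.
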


\subsection{L'espace  d'arcs et le probl\`eme de Nash}
\label{ssec:nash}
Soient $\KK$ un corps alg\'ebriquement clos de caract\'eristique nulle,  
$V$ une vari\'et\'e alg\'ebrique normale sur $\KK$  et   $\pi:X\rightarrow V$ 
une d\'esingularisation divisorielle de $V$.\\

\'Etant donn\'e une d\'esingularization quelconque $\pi':X'\rightarrow V$, l'application birationnelle
 $(\pi')^{-1}\circ\pi:X\dashrightarrow X'$ est bien d\'efinie en codimension $1$ ($X$ est une vari\'et\'e normale).
 Si $\E$ est composante irr\'eductible de la fibre exceptionnelle de $\pi$, alors il existe  un ouvert 
 $\E^{0}$ de $\E$  sur lequel l'application  $(\pi')^{-1}\circ\pi$ est bien d\'efinie.
 Le diviseur $\E$ est appel\'e  {\it diviseur essentiel  sur} $V$ si pour toute d\'esingularisation 
 $\pi':X'\rightarrow V$,  l'adh\'erence de Zariski   $\overline{(\pi')^{-1}\circ\pi(\E^0)}$ 
 est une composante irr\'eductible de $(\pi')^{-1}(\sing V)$, o\`u $\sing V$ est le lieu singulier de $V$. 
 On note $\Ess(V)$ l'ensemble des diviseurs essentiels sur $V$.\\

On rappelle que  $V_{\infty}$ est l'espace  d'arcs sur $V$ et que les $K$-points de $V_{\infty}$
sont en correspondance bijectives avec les   $K$-arcs sur $V$.
Soient $\alpha\in V_{\infty}$ et $\KK_{\alpha}$  le corps r\'esiduel du point $\alpha$.
Par abus de notation, on note aussi  $\alpha$ le $\KK_{\alpha}$-arc qui correspond au point $\alpha\in V_{\infty}$.
Soit $\p:V_{\infty}\rightarrow V$ la projection canonique $\alpha\mapsto \alpha(0)$, o\`u $0$ 
est le point ferm\'e de $\spec \KK_{\alpha}[[t]]$.  On note $\mathcal{CN}(V)$ 
l'ensemble des composantes irr\'eductibles de  $V_{\infty}^{s}:= \p^{-1}(\sing V)$. Nash a d\'emontr\'e  que l'application  
$\mathcal{N}_{V}:\mathcal{CN}(V)\rightarrow \Ess(V)$ qui associe \`a $C\in \mathcal{CN}(V)$ l'adh\'erence 
$\overline{\{\widehat{\alpha}_{C}(0)\}}$ est une application bien  d\'efinie et injective, o\`u $\widehat{\alpha}_{C}$ 
est le rel\`evement \`a $X$ du point g\'en\'erique $\alpha_{C}$ de $C$, c'est-\`a-dire 
$\pi\circ\widehat{\alpha}_{C}=\alpha_{C}$. Le probl\`eme de Nash consiste \`a \'etudier la surjectivit\'e  de  $\mathcal{N}_{V}$.\\

Soit $\E$ une composante irr\'eductible de la fibre exceptionnelle du morphisme $\pi:X\rightarrow V$.
On pose $\N'(\E):=\{\alpha\in V_{\infty}\backslash (\sing V)_{\infty}\mid \widehat{\alpha}(0)\in  \E\}$ et
on note $\N(\E)$ l'adh\'erence dans  $V_{\infty}$ de  $\N'(\E)$.
 On peut montrer que $\N(\E)$ est irr\'eductible et que $V_{\infty}^{s}=\bigcup_{\E\in \Ess(V)}\N(\E)$.
Les morphismes $\omega:\spec K[[s,t]]\rightarrow V$ sont appel\'es $K$-wedges  sur $V$. Les  $K$-wedges  sur $V$
sont en correspondance bijectives avec les   $K[[s]]$-points de $V_{\infty}$. 
L'image du point ferm\'e (resp. du point g\'en\'erique) de $\spec K[[s]]$ dans $V_{\infty}$ 
est appel\'e le centre (resp. l'arc g\'en\'erique)  du $K$-wedge $\omega$.\\

Soit $\E$ un diviseur essentiel sur $V$. 
Un $K$-wedge $\omega$ est appel\'e  {\it $K$-wedge admissible centr\'e  en} $N(\E)$ 
si le centre (resp. l'arc g\'en\'erique)  de $\omega$  est le point g\'en\'erique de $N(\E)$ 
(resp. appartient \`a $V_{\infty}^{s}$). Dans  \cite{Reg06} (voir aussi l'article \cite{Lej80}), l'auteur montre que  
$\E$ appartient \`a l'image de l'application de Nash $\mathcal{N}_V$  si et seulement si, pour tout 
 corps d'extension $K$  du corps r\'esiduel du point g\'en\'erique de $\N(\E)$, 
tout $K$-wedge admissible centr\'e en $\N(\E)$ se rel\`eve \`a $X$, c'est-\`a-dire il existe un $K$-wedge 
$\widehat{\omega}$ sur $X$ tel que $\pi\circ \ \widehat{\omega}=\omega$.
\subsection{Rappels sur les Vari\'et\'es Toriques et la R\'esolution des Singularit\'es isol\'ees d'hypersurfaces}
\label{ssec:torique}

Soient  $\KK$ un corps alg\'ebriquement clos de caract\'eristique quelconque 
et $\KK^{\star}$ son groupe multiplicatif. Un  {\it tore alg\'ebrique} 
$T$ de dimension $n$ est une vari\'et\'e alg\'ebrique affine isomorphe \`a $(\KK^{\star})^n$.  
La structure de groupe multiplicatif  de $(\KK^{\star})^n$ 
induit une structure de groupe sur  $T$.
\'Etant donn\'e un tore alg\'ebrique $T$ sur $\KK$, 
une vari\'et\'e torique $V$ est une vari\'et\'e alg\'ebrique contenant  $T$, 
comme ouvert de Zariski dense, et munie d'une action du tore alg\'ebrique, $T\times V\rightarrow V$, 
prolongeant  $T\times T\rightarrow T$.
Toutes les vari\'et\'es toriques que nous consid\'erons dans la suite sont normales.\\

Les vari\'et\'es toriques ont une relation \'etroite avec la g\'eom\'etrie convexe \'el\'ementaire.
Dans la suite on donnera quelques d\'etails de cette description.\\

Soit $\N:=\ZZ^{n}$ muni de sa base standard $\{\e_1,\e_2,...,\e_{n}\}$ 
Le dual de $\N$, not\'e $\M$, est identifi\'e avec $\ZZ^{n}$  
au moyen de  la forme bilin\'eaire standard 
$\langle \e_i,\e_j\rangle =\delta_{ij}$, $1\leq i,j\leq n$.

Un  {\it c\^one convexe  poly\'edral} ou plus simplement  {\it c\^one} est tout ensemble 
$\sigma$  de  $\N_{\RR}:=\N\otimes_{\ZZ}\RR$ (resp. $M_{\RR}:=\M\otimes_{\ZZ}\RR$) 
tel qu'il existe un entier $s$ et des points $v_i$, $1\leq i\leq s$ de $\N_{\RR}$ 
(resp. $M_{\RR}$)  tels  
que $\sigma$ soit l'ensemble $\langle v_1,...,v_s\rangle$ des combinaisons lin\'eaires 
\`a coefficients r\'eels non n\'egatifs des $v_i$, $1\leq i\leq s$. 
On note $\Delta:=\langle \e_1,\e_2,...,\e_{n}\rangle$ le c\^one standard.

Le c\^one $\sigma$ est fortement convexe s'il admet $0$ pour sommet. 
Son int\'erieur  $\sigma^0$ est, par d\'efinition, l'ensemble 
des \'el\'ements de $\sigma$ 
qui n'appartient \`a aucune de ses faces strictes.

Un c\^one $\sigma$  de $\N_{\RR}$ d\'etermine un c\^one $\sigma^{\vee}$ 
(resp. un sous-espace vectoriel $\sigma^{\perp}$) de $M_{\RR}$  en posant $\sigma^{\vee} (\mbox{resp.}\;\;   \sigma^{\perp})=\{m\in M_{\RR}\mid m_{\mid \sigma}\geq 0\;(\mbox{resp.}\;=0)\}$. La correspondance $\sigma\rightarrow \sigma^{\vee}$ \'etablit une bijection des c\^ones de 
$\N_{\RR}$ dans ceux de $M_{\RR}$ puisque $\sigma=\{n\in \N_{\RR}\mid n_{\mid \sigma^{\vee}}\geq 0\}$.
Soit  $\KK[\sigma^{\vee}\cap \M]$ la $\KK$-alg\`ebre du semigroupe $\sigma^{\vee}\cap \M$. 
En vertu du Lemme de Gordan, la $\KK$-alg\`ebre  $\KK[\sigma^{\vee}\cap \M]$ est  
de type fini.

On note $T$ le tore $\N\otimes_{\ZZ}\KK$,  
$x_j:=\chi^{\e_j} \in \KK[\Delta^{\vee}\cap \M]$, $1 \leq j\leq 4$, 
et (par abus de notation) $0$ la $T$-orbite de dimension z\'ero de $\AF_{\KK}^n:=\spec \KK[x_1,x_2,\cdots,x_n]$.
On remarque que $T=\spec \KK[\M]$.

On pose $U_{\sigma}=\spec \KK[\sigma^{\vee}\cap \M]$. 
Le plongement canonique  $\KK[\sigma^{\vee}\cap \M]\hookrightarrow \KK[\M]$ montre que $U_{\sigma}$ 
est une vari\'et\'e torique affine normale sur $\KK$.\\

Un \'eventail $\Sigma$ est un ensemble fini  de c\^ones fortement convexes dans $\N_{\RR}$  tel que:
\begin{itemize}
 \item[-] toute  face d'un c\^one de  $\Sigma$ est dans $\Sigma$;
\item[-] l'intersection de deux c\^ones de  $\Sigma$ est une face de chacun de ces deux c\^ones. 
\end{itemize}

\'Etant donn\'e un \'eventail $\Sigma$, on d\'efinit une vari\'et\'e torique $X(\Sigma)$
en recollant les vari\'et\'es affines $U_{\sigma}$, quand $\sigma$ parcourt $\Sigma$ 
le long des ouverts d\'efinis par les faces communes, c'est-\`a-dire si $\sigma_1$ 
et $\sigma_2$ appartiennent \`a $\Sigma$, on a $U_{\sigma_1}\cap U_{\sigma_2}=U_{\sigma_1\cap \sigma_2}$.

Il est connu que 
toute vari\'et\'e torique normale est obtenue comme la vari\'et\'e torique 
 associ\'ee \`a un \'eventail.\\

On dit que $v$ est un  {\it vecteur primitif}  du r\'eseau $N$ (resp.  $M$)  si $v$ engendre le $\ZZ$-module $\RR v\cap N$ (resp.  $\RR v\cap M$). Les vecteurs primitifs des faces de dimension $1$ d'un c\^one sont appel\'es les {\it vecteurs extr\'emaux} du c\^one.
Si $\sigma$ est un c\^one fortement convexe tel que $\sigma\neq\{0\}$, 
alors il est engendr\'e par ses vecteurs extr\'emaux.

On appelle c\^one r\'egulier  tout c\^one fortement convexe dont 
les vecteurs extr\'emaux sont une partie d'une base du r\'eseau. Un \'eventail $\Sigma$ est 
appel\'e  \'eventail r\'egulier si  tout c\^one $\sigma\in \Sigma$ est r\'egulier. 
La vari\'et\'e torique $X(\Sigma)$ est non singuli\`ere si et seulement si l'\'eventail $\Sigma$ est  r\'egulier.\\

Un th\'eor\`eme tr\`es int\'eressant de la th\'eorie de vari\'et\'es
toriques est l'existence d'une d\'esingularisation en caract\'eristique quelconque 
(Une d\'esingularisation ou r\'esolution des singularit\'es d'une vari\'et\'e $V$ est 
un morphisme propre et birationnel $\pi: X\rightarrow V$ tel que $\pi\mid_{X \backslash \pi^{-1}(\sing V)}: X \backslash \pi^{-1}(\sing V)\rightarrow V\backslash \sing V$ est un 
isomorphisme, o\`u $X$ est une vari\'et\'e lisse et $\sing V$ est le lieu singulier de $V$).
Plus pr\'ecis\'ement, soit $X(\Sigma)$ 
la vari\'et\'e torique associ\'ee \`a un \'eventail $\Sigma$ dans $\N_{\RR}$. Alors, il existe une d\'esingularisation \'equivariante $f:X(\Sigma')\rightarrow X(\Sigma)$, o\`u $\Sigma'$ est une subdivision r\'eguli\`ere de $\Sigma$, c'est-\`a-dire tous les c\^ones  de $\Sigma$ sont r\'eguliers.
Pour plus de d\'etails, voir \cite{KKMS73} ou \cite{CLS11}.\\

Maintenant, on fait quelques rappels sur la
r\'esolution des singularit\'es isol\'ees d'hypersurfaces.\\

Soit $\f=\sum c_{e}x^e\in \KK[x_1,x_2,...,x_n]$, $n\geq 3$, un polyn\^ome, o\`u $e=(e_1,e_2,...,e_n)\in
\ZZ^n_{\geq 0}$, $x^e:=x_1^{e_1}x_2^{e_2}\cdots x_4^{e_4}$ et $c_e\in \KK$. 
On note $\mathcal{E}(\f)$ l'ensemble des exposants $e\in \ZZ^{n}_{\geq 0}$, 
dont le coefficient $c_e$ est  non nul, c'est-\`a-dire 
$\mathcal{E}(\f):=\{e\in \ZZ^{n}_{\geq 0}\mid c_e\neq 0\}$.\\

Le poly\`edre de Newton  $\Gamma_{+}(\f)$ associ\'e \`a $\f$ est l'enveloppe convexe de 
l'ensemble $\{e+\RR^{n}_{\geq 0}\mid e\in \mathcal{E}(\f)\}$ et la fronti\`ere de Newton $\Gamma(\f)$
est la r\'eunion des faces compactes de $\Gamma_{+}(\f)$. 

On note $\mathcal{I}(\f)$ l'id\'eal de $\KK[x_1,x_2,...x_n]$ engendr\'e par les mon\^omes 
$x^e$, $e\in \Gamma(\f)\cap\ZZ^{n}$, c'est-\`a-dire $\mathcal{I}(\f):=(\{x^e\mid e\in \Gamma(\f)
\cap \ZZ^{n}\})$.

L'\'eventail de Newton $\EN{\f}$ associ\'e \`a $\f$ est la subdivision du 
c\^one standard $\Delta$ correspondant \`a l'\'eclatement normalis\'e de 
$\AF^n_{\KK}$ de centre l'id\'eal $\mathcal{I}(\f)$.
En particulier  $\mathcal{I}(\f)\mathcal{O}_{X(\EN{\f})}$ est un faisceau inversible.\\

Le polyn\^ome $\f$ est appel\'e {\it non}-{\it d\'eg\'en\'er\'e par rapport \`a la 
fronti\`ere de Newton} si pour toute face compacte $\gamma$ de 
$\Gamma_{+}(\f)$, le polyn\^ome $\f_{\gamma}:=\sum_{e\in \gamma}c_ex^{e}$ 
est non singulier sur le tore $T:=\N\oplus_{\ZZ}\KK$, 
c'est-\`a-dire les polyn\^omes $\f_{\gamma}$, $\partial_{x_1}\f_{\gamma}$,..., $\partial_{x_n}
\f_{\gamma}$ n'ont pas de z\'ero commun en dehors  de l'ensemble $x_1x_2\cdots x_n=0$.\\

On d\'efinit sa fonction d'appui associ\'ee au poly\`edre de Newton $\Gamma_{+}(\f)$), $p\in \Delta$, par

\begin{center} $ \appui{\f}(p)=\inf \{\langle r,p\rangle\mid r\in \Gamma_{+}(\f)\}.$\end{center}
o\`u $\langle\mbox{ },\mbox{ }\rangle:\RR^n\times\RR^n\rightarrow \RR$ est la forme bilin\'eaire
standard.\\

Il est connu que l'\'eventail de Newton $\EN{\f}$ satisfait la propri\'et\'e
suivante:

\begin{itemize}
 \item[($\star$)] Soient $J\subset \{1,2,...,n\}$ et  $\sigma_{J}=\{(p_1,p_2,...,p_n)\in 
 \Delta\mid p_i= 0\; \mbox{ssi}\; i\not\in J\}$. S'il existe $p\in \sigma_J$ 
 tel que $\appui{\f}(p)=0$,  alors l'adh\'erence de $\sigma_J$ dans $\Delta$ est un c\^one de 
 $\EN{\f}$. 
\end{itemize}
\vspace{0.2cm}

Une subdivision r\'eguli\`ere $\Sigma$ de $\EN{\f}$
est appel\'ee {\it subdivision r\'eguli\`ere admissible},  
si l'\'eventail  $\Sigma$ satisfait la propri\'et\'e $(\star$), 
c'est-\`a-dire  s'il existe $n\in \sigma_J$ tel que $\appui{\g}(n)=0$,
alors $\overline{\sigma}_J\in \Sigma$.\\
   Pour plus de d\'etails, voir \cite{KKMS73} et \cite{Var76}.\\

Dans la proposition suivante, on suppose que $V$ est une hypersurface normale de $\AF^{n}_{\KK}$,
donn\'ee par l'\'equation $\f=0$, o\`u $\f$ est un polyn\^ome irr\'eductible 
non-d\'eg\'en\'er\'e par rapport \`a la fronti\`ere de Newton $\Gamma(\f)$. 
De plus, on suppose  que $0\in \AF_{\KK}^n$ est l'unique point singulier de $V$.\\

Soit $\Sigma$ une subdivision r\'eguli\`ere admissible de l'\'eventail de Newton $\EN{\f}$. 
On note $\pi:X(\Sigma)\rightarrow \AF^{n}_{\KK}$ 
le morphisme torique induit par la subdivision $\Sigma$ du c\^one $\Delta$ 
et  $\widetilde{V}$ le transform\'e strict de $V$ dans $X(\Sigma)$. 
Par abus de notation, on note $\pi:\widetilde{V}\rightarrow V$
la restriction du morphisme $\pi:X(\Sigma)\rightarrow \AF_{\KK}^n$ \`a $\widetilde{V}$.\\ 

 La proposition suivante   r\'esulte  des  Lemmes $10.2$ et $10.3$ de \cite{Var76} 
 (pour plus de d\'etails, voir \cite{Mer80}).
\begin{proposition}

\label{pr:cr_nor-f}  Le morphisme $\pi:\widetilde{V}\rightarrow V$ est une d\'esingularisation de $V$. 

 Si l'hypersurface $V$ ne contient aucune de $T$-orbite de $\AF_{\KK}^n$ de 
 dimension strictement plus grande que z\'ero, 
 alors le morphisme $\pi:X(\Sigma)\rightarrow \AF^n_{\KK}$ est une r\'esolution plong\'ee de $V$, 
 c'est-\`a-dire $\pi:X(\Sigma)\rightarrow \AF_{\KK}^n$ est un morphisme propre et birationnel, 
 $\pi':X(\Sigma)\backslash(\pi')^{-1}(0)\rightarrow \AF_{\KK}^n\backslash \{0\}$
 est un isomorphisme et $(\pi')^{-1}(V)$ est un diviseur \`a croisements normaux.
\end{proposition}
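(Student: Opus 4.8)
The plan is to invoke the toric combinatorial machinery of Varchenko. First I would recall that, since $\f$ is non-dégénéré par rapport à $\Gamma(\f)$, the normalized blow-up $X(\EN{\f})\to\AF^n_{\KK}$ already separates the strict transform $\widetilde V$ from the exceptional locus along the torus orbits, and that any regular subdivision $\Sigma$ of $\EN{\f}$ refines this. The key input is Lemmes $10.2$ and $10.3$ of \cite{Var76}: for each cone $\sigma\in\Sigma$, on the affine chart $U_\sigma$ the equation of $\widetilde V$ is obtained from $\f$ by the monomial change of coordinates associated to the extremal vectors of $\sigma$ followed by dividing out by the monomial $\fhq$ determined by the support function $\appui{\f}$. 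Because $\Sigma$ is regular these charts are smooth affine spaces, and the non-degeneracy hypothesis guarantees that the resulting hypersurface $\widetilde V\cap U_\sigma$ is smooth: its defining polynomial and all partial derivatives have no common zero on the torus, and the admissibility condition $(\star)$ ensures the relevant faces $\overline\sigma_J$ are cones of $\Sigma$ so the torus-boundary strata are controlled as well. This gives smoothness of $\widetilde V$.

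Next I would check that $\pi\colon\widetilde V\to V$ is proper and birational. Properness is immediate because $\pi\colon X(\Sigma)\to\AF^n_{\KK}$ is proper (it comes from a subdivision of a single cone $\Delta$), and $\widetilde V$ is closed in $X(\Sigma)$, so the restriction is proper. Birationality follows because $\Sigma$ subdivides $\Delta$, hence $\pi$ is an isomorphism over the open torus $T$, and $V$ meets $T$ in a dense open set (as $\f$ is irreducible and $0$ is the only singular point, $V\not\subset\{x_1\cdots x_n=0\}$). Finally I must verify that $\pi$ restricts to an isomorphism over $V\setminus\sing V=V\setminus\{0\}$: away from the origin $V$ is already smooth and meets only the torus orbits of positive dimension, over which the refinement $\Sigma\to\Delta$ introduces no new structure by property $(\star)$ combined with the hypothesis that $\appui{\f}$ vanishes on those strata; thus $\pi^{-1}(V\setminus\{0\})\to V\setminus\{0\}$ is an isomorphism. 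Combining, $\pi\colon\widetilde V\to V$ is a désingularisation.

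For the second assertion, suppose $V$ contains no $T$-orbit of $\AF^n_{\KK}$ of positive dimension. Then I would argue as above that $\pi\colon X(\Sigma)\to\AF^n_{\KK}$ is proper and birational and restricts to an isomorphism over $\AF^n_{\KK}\setminus\{0\}$ — the only orbit that can lie in $V$ is $\{0\}$, so away from it $V$ stays inside the big torus where $\pi$ is an isomorphism, and the total transform of the coordinate hyperplanes is unaffected over that locus. It remains to see that $(\pi')^{-1}(V)$ is a divisor à croisements normaux: on each smooth chart $U_\sigma\cong\AF^n_{\KK}$ the total transform is the union of the (smooth, by the first part) strict transform $\widetilde V\cap U_\sigma$ with some of the coordinate hyperplanes $\{y_i=0\}$; the non-degeneracy of $\f$ along every compact face of $\Gamma(\f)$ is exactly what forces $\widetilde V\cap U_\sigma$ to be transverse to all these coordinate hyperplanes and to their intersections, giving the normal crossing condition.

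The main obstacle is the smoothness and transversality verification inside each chart $U_\sigma$: one must translate the non-degeneracy of $\f$ with respect to $\Gamma(\f)$ into a statement about the transformed polynomial $\fhq^{-1}\cdot(\f\circ(\text{monomial map}))$ and all its partials having no common zero on $U_\sigma$, including along the coordinate hyperplanes — this is precisely the content of Varchenko's Lemmes $10.2$–$10.3$, and the admissibility of $\Sigma$ (property $(\star)$) is what makes the bookkeeping of which faces $\overline\sigma_J$ appear as cones go through cleanly; I would quote those lemmas rather than reprove them, following \cite{Mer80}.
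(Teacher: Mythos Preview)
Your proposal is correct and follows the same approach as the paper: the paper does not give an independent proof but simply states that the proposition \og r\'esulte des Lemmes $10.2$ et $10.3$ de \cite{Var76} (pour plus de d\'etails, voir \cite{Mer80})\fg, which is exactly the source you invoke. Your write-up merely unpacks in more detail what those lemmas say and how the admissibility condition $(\star)$ enters, which the paper leaves implicit.
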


Soient $\Sigma'$ un \'eventail dans $\N_{\RR}$ et $\sigma$ un c\^one de $\Sigma'$.
 Nous appellerons $G$-{\it subdivision r\'eguli\`ere} de $\sigma$, une subdivision r\'eguli\`ere de $\sigma$, dont les vecteurs extr\'emaux
sont exactement les \'el\'ements irr\'eductibles du semi-groupe $\sigma\cap \N$.

 On dit qu'un \'eventail $\Sigma'_{\mathcal{G}}$ est une 
 $G$-{\it subdivision r\'eguli\`ere} de $\Sigma'$ si chaque 
 c\^one de $\Sigma'_{\mathcal{G}}$ est obtenu par une  
 $G$-subdivision r\'eguli\`ere d'un c\^one de $\Sigma'$.
 Le morphisme \'equivariant  associ\'e \`a une $G$-subdivision
 r\'eguli\`ere de $\Sigma'$  est appel\'e $G$-{\it d\'esingularisation} de $X(\Sigma')$.

 Il existe des exemples o\`u  un \'eventail $\Sigma$ n'admet pas une 
 $G$-{\it subdivision r\'eguli\`ere} (voir \cite{BoGo92}).
 On remarque que si $\EN{\f}$ admet une $G$-{\it subdivision r\'eguli\`ere} $\EN{\f}_{G}$, alors 
 $\EN{\f}_G$ est une subdivision r\'eguli\`ere admissibe.

\section{Familles d'espaces de $m$-Jets et d'espaces d'arcs}
\label{sec:Fam-mJ-arc}
Il est connu que les espaces de $m$-jets et l'espace d'arcs fournissent des 
informations sur la g\'eom\'etrie  de la vari\'et\'e sous-jacente. Il est donc naturel de se poser la question suivante:
Est-ce qu'une d\'eformation de la vari\'et\'e $V$ induit une d\'eformation 
(compatible) des espaces de $m$-jets $V_m$ et de l'espace d'arcs $V_{\infty}$?
L'examen de cette question est le sujet de la sous-section \ref{ssec:Fam-mJ-arc},
o\`u on pr\'esente quelques r\'esultats et quelques probl\`emes ouverts.

Dans la sous-section \ref{ssec:Fam-apl_nash}, on consid\`ere certaines d\'eformations d'une vari\'et\'e ayant une singularit\'e isol\'ee
qui admet certaines r\'esolutions simultan\'ees \`a plat (Autrement dit, de fa\c con intuitive, la fibre exceptionnelle
se d\'eforme aussi) et on se pose la question suivante:
Est-ce que la condition qu'un diviseur appartient \`a l'image de l'application de Nash est pr\'eserv\'ee
par cette type de d\'eformations?
On donne quelques r\'eponses partielles \`a cette question.

\subsection{Familles d'espaces de $m$-Jets et d'espaces d'arcs}
\label{ssec:Fam-mJ-arc}
Soient $V$ une vari\'et\'e alg\'ebrique sur $\KK$, $S$ un sch\'ema, $0\in S$ un point ferm\'e et $W$ 
une d\'eformation de $V$ sur $S$, c'est-\`a-dire il existe un diagramme  commutatif:
 
 \begin{center}
\begin{tabular}{lr}

 $\shorthandoff{;:!?} 
\xymatrix @!0 @R=1.5pc @C=2pc{  
V\ar@{^{(}->}[rr] \ar@{->}[dd] & 
& W \ar@{->}[dd]\\
   &\square& \\
  0 \ar@{^{(}->}[rr] & &  S
}$   
\end{tabular}  
\end{center} o\`u $W\rightarrow S$ est plat et $V\cong W\times_S\{0\}$.
Par abus de notation, on note $\{0\}$ le sous-sch\'ema de $S$ associ\'e au point $0$.\\  

Comme on a dit ci-dessus, on se pose la question suivante:
Est-ce qu'une  d\'eformation $W$ de la vari\'et\'e $V$ induit une d\'eformation 
des espaces de m-jets $V_m$ et de l'espace d'arcs $V_{\infty}$?\\

En vertu des Th\'eor\`emes \ref{th:prop-fonct-mjet} et \ref{th:prop-fonct-arc}, et la Proposition \ref{pr:mjets-chang-base},
l'espace $\mJet{W}{S}{m}$ est  le candidat naturel pour \^etre l'espace d\'eformation de $V_{m}$, 
$1\leq m\leq \infty$. La question est donc de savoir si  le morphisme $\mJet{W}{S}{m}\rightarrow S$ est plat.\\

En g\'en\'eral, le fait que le morphisme $W\rightarrow S$ soit plat n'implique pas que le morphisme $\mJet{W}{S}{m}\rightarrow S$
le soit.\\

\begin{exemple}
 Soient $V\subset \AF_{\KK}^2$ une courbe donn\'ee par le polyn\^ome $x^3+y^4=0$, $W\rightarrow \AF_{\KK}:(x,y,s)\mapsto s$
 la d\'eformation donn\'ee par le polyn\^ome $x^3+y^4+s=0$ et $W_s$ la fibre sur $s\in \AF_{\KK}$.
 Pour tout $s\neq 0$ la vari\'et\'e $W_s$ est lisse, on a donc que l'espace de $2$-jet $(W_s)_2$ est une vari\'et\'e
 de dimension $3$. Par contre, l'espace de $2$-jet $V_2$ a une composante de dimension $4$. Par cons\'equence
 le morphisme $\mJet{W}{\AF_{\KK}}{m}\rightarrow \AF_{\KK}$ n'est pas plat.
\end{exemple}

La question est donc: Quand est-ce qu'une d\'eformation de $V$ induit une d\'eformation 
de l'espace $V_{m}$, $1\leq m\leq\infty$?\\

Dans la suite,  on consid\`ere le cas de vari\'et\'es localement intersection compl\`ete.\\

Soient $n$, $p$ deux entiers positifs tels que $0<p<n$, et  $V\subset (\AF_{\KK}^{n},0)$ un germe d'un sch\'ema intersection compl\`ete,
d\'efinie par l'id\'eal $(\f_1(x),...,\f_p(x))\subset \mathcal{O}_{\AF^n,0}$, o\`u $x:=(x_1,...,x_n)$ et
$\mathcal{O}_{\AF^{n},0}$ est la 
fibre en $0$ du faisceau $\mathcal{O}_{\AF^{n}}$\\

On fixe un entier $l\geq 1$ et soient $\F_1(x,s), ...,\F_p(x,s)\in \mathcal{O}_{\AF^{n+l},0}$, o\`u $s:=(s_1,...,s_l)$,  
tels que $\F_i(x,0)=\f_i(x)$, $1\leq i\leq p$, et $W$ le germe du sch\'ema d\'efini par l'id\'eal  
$I:=(F_1(x,s),...,F_p(x,s))\subset \mathcal{O}_{\AF^{n+l},0}$.\\

Dans les Sections $3$ et $4$ de \cite{Art76}, l'auteur montre que 
$\mathrm{Tor}_1^{\mathcal{O}_{\AF^{l},0}}(\KK_{\AF^{l}_{0}}, \mathcal{O}_{\AF^{n+l},0}/I)=0$,  o\`u $\KK_{\AF^{l}_0}$ est 
le corps r\'esiduel de $\mathcal{O}_{\AF^{l},0}$. Ce qui implique, en utilisant le crit\`ere local 
de platitude (voir le Th\'eor\`eme $6.8$ de \cite{Eis95}), que le  morphisme  $\pi: W\rightarrow S; (x,s)\mapsto s$, 
o\`u $S:=\AF_{\KK, 0}^l$, est 
plat. En particulier $W$ est une d\'eformation de $V$.\\

Si  $V$ est un germe d'une vari\'et\'e intersection compl\`ete ayant une   singularit\'e
isol\'ee \`a l'origine $0$,
alors  il existe un entier $l$ et $\F_1,...,\F_p\in \mathcal{O}_{\AF^{n+l},0}$ tel que la d\'eformation $\pi:W\rightarrow S$
est une d\'eformation Verselle 
(voir \cite{KaSc72}),
c'est-\`a-dire s'il existe une autre d\'eformation $\pi':W'\rightarrow S'$ de $V$, alors il existe un morphisme
$S'\rightarrow S$ tel que la d\'eformation $W'$ est isomorphe \`a la d\'eformation $W\times_S S'$.\\

Dans les r\'esultats suivants, on suppose que $V$ est un germe d'une vari\'et\'e intersection 
compl\`ete ayant une   singularit\'e isol\'ee \`a l'origine $0$ et que $W$ est une d\'eformation Versalle de $V$.

Le r\'esultat suivant est une cons\'equence directe du crit\`ere local de platitude et de la Proposition \ref{pr:mjets-chang-base}.
\begin{proposition}
\label{pr:def-mjets-lic}
 On suppose que $V_{m}$ est localement intersection compl\`ete, 
 alors le morphisme $\mJet{W}{S}{m} \rightarrow S$ est une d\'eformation
 de  $V_m$ pour tout $1 \leq m\leq \infty$.
\end{proposition}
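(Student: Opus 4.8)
The plan is to deduce the flatness of $\mJet{W}{S}{m}\rightarrow S$ from the local criterion of flatness applied at each point of the central fibre $V_m$, using the base change property of Proposition \ref{pr:mjets-chang-base}. First I would recall the set-up: $V=W\times_S\{0\}$ with $0\in S=\AF^l_{\KK,0}$, and since $W\rightarrow S$ is flat (as recalled above from \cite{Art76} via the local criterion of flatness), the fibre product decomposition gives, by Proposition \ref{pr:mjets-chang-base} with $Y=S$, $Y'=\{0\}$, the identification $(V)_m = \mJet{W\times_S\{0\}}{\{0\}}{m}\cong \mJet{W}{S}{m}\times_S\{0\}$. Thus $V_m$ is the central fibre of the candidate family $\pi_m:\mJet{W}{S}{m}\rightarrow S$, and what must be shown is precisely that $\pi_m$ is flat; this is what ``$\mJet{W}{S}{m}\rightarrow S$ est une d\'eformation de $V_m$'' means.

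Next I would reduce flatness to a pointwise statement. It suffices to check flatness of $\pi_m$ at each closed point of $\mJet{W}{S}{m}$ lying over $0\in S$ (the locus over $S\setminus\{0\}$ causes no difficulty since, by the same base change, the fibres there are the $m$-jet spaces of the nearby fibres $W_s$, and flatness is an open condition whose failure could only be detected along the central fibre; more carefully one invokes that $\pi_m$ is of finite type and flatness over the local base $S$ is tested on the fibre over the closed point). At such a point $P$, let $A=\mathcal{O}_{S,0}$, which is a regular local ring (localization of $\KK[s_1,\dots,s_l]$), and let $B=\mathcal{O}_{\mJet{W}{S}{m},P}$. Because $V_m=\mJet{W}{S}{m}\times_S\{0\}$ is assumed locally a complete intersection, the fibre ring $B/\mathfrak{m}_A B$ is a complete intersection $\KK$-algebra of the expected dimension. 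Now I would apply the standard criterion (see e.g. Theorem 6.8 of \cite{Eis95}, exactly as used for $W\rightarrow S$ itself in the excerpt): a finite-type algebra $B$ over the regular local ring $A$ whose closed fibre $B/\mathfrak{m}_AB$ is Cohen--Macaulay (in particular, a complete intersection) of dimension $\dim B-\dim A$ is flat over $A$ precisely when $\dim B = \dim A + \dim(B/\mathfrak{m}_AB)$, i.e. when $B$ is itself Cohen--Macaulay of the dimension forced by the dimension count — equivalently, one checks $\mathrm{Tor}_1^A(\KK_{S,0},B)=0$ by a regular-sequence/Koszul argument, since $A$ is regular.

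The one genuine input beyond formal nonsense is the dimension/Cohen--Macaulay estimate on the total space $\mJet{W}{S}{m}$, and this is where I expect the main obstacle to lie. The clean way around it is to exhibit $\mJet{W}{S}{m}$ itself as a complete intersection inside an affine space over $S$: writing $W = \spec\big(\mathcal{O}_{\AF^{n+l},0}/(F_1,\dots,F_p)\big)$ with $s$ the parameters, the Hasse--Schmidt construction recalled in \S\ref{ssec:h-s} presents $\HSF{m}{\mathcal{O}_W}{\mathcal{O}_S}$ as a quotient of a polynomial ring in the variables $d^i x_j$ ($1\le i\le m$, $1\le j\le n$) over $\mathcal{O}_W$ by the $p(m+1)$ equations obtained from $F_1,\dots,F_p$ and their Hasse--Schmidt prolongations $d F_k, \dots, d^m F_k$ (the $s$-variables contributing no new relations since $d^is=0$). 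Hence $\mJet{W}{S}{m}$ is cut out in $\AF^{n(m+1)+l}_{\KK,0}$ by $p(m+1)$ equations, so every component has dimension $\ge n(m+1)+l-p(m+1) = (n-p)(m+1)+l$. Since $V_m$ is assumed to be a local complete intersection, its components have dimension exactly $(n-p)(m+1)$, which equals $\dim B-\dim A$ at $P$; combined with the lower bound this forces $B$ to be a complete intersection of that dimension, the fibre over $0$ to have no embedded component in excess dimension, and $\mathrm{Tor}_1^A(\KK,B)=0$ by the local criterion of flatness — exactly the argument of \cite{Art76} applied now to the prolonged system rather than to $F_1,\dots,F_p$. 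I would close by remarking that the case $m=\infty$ follows by passing to the limit: $\mJet{W}{S}{\infty}=\varprojlim_m \mJet{W}{S}{m}$, each $\mJet{W}{S}{m}\rightarrow S$ is flat by the finite case, and a filtered inverse limit of flat $A$-algebras with affine transition maps is flat, while the base change $\mJet{W}{S}{\infty}\times_S\{0\}\cong V_\infty$ is Proposition \ref{pr:mjets-chang-base} again.
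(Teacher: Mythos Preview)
Your approach is exactly the one the paper indicates: identify the central fibre via Proposition~\ref{pr:mjets-chang-base}, then apply the local criterion of flatness to the system of $p(m+1)$ prolonged equations, mirroring Artin's argument for $W\to S$ itself. The paper gives only the one-line hint ``cons\'equence directe du crit\`ere local de platitude et de la Proposition~\ref{pr:mjets-chang-base}''; you have supplied the details correctly.

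One slip in the $m=\infty$ paragraph: you write ``a filtered inverse limit of flat $A$-algebras \dots\ is flat'', but while the schemes $\mJet{W}{S}{m}$ form an inverse system, the coordinate rings $\HSF{m}{\mathcal{O}_W}{\mathcal{O}_S}$ form a \emph{direct} system (this is how $\HS{\infty}$ is defined in \S\ref{ssec:h-s}), and it is the filtered \emph{direct} limit of flat modules that is flat. Also, your limit argument tacitly assumes that every finite $V_m$ is LCI, whereas the hypothesis as stated is on $V_m$ for the given $m$; for $m=\infty$ one should either read the hypothesis as holding for all finite $m$ (as in the log-canonical corollary that follows), or run the same regular-sequence argument directly in the infinite polynomial ring.
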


L'espace de $m$-jets $V_m$ est d\'efini par $p(m+1)$ \'equations, mais en g\'en\'eral cet espace n'est pas  
localement une intersection compl\`ete. 
En fait, si l'espace de $m$-jets $V_m$ est localement une intersection compl\`ete (resp. irr\'eductible) si et seulement
si $V$  est une singularit\'e log-canonique (resp. rationnelle). Voir les articles \cite{Mus01} et \cite{EiMu04}.\\

Le r\'esultat suivant suit imm\'ediatement  de la Proposition \ref{pr:def-mjets-lic} 
et du Th\'eor\`eme  $0.1$ de l'article ant\'erieurement cit\'e.

\begin{theoreme}

On conserve les notations ci-dessus et on suppose que la singularit\'e de $V$ est log-canonique. Alors l'espace  $\mJet{W}{S}{m}$ est une
une d\'eformation de l'espace de $m$-jet $V_m $, $1\leq m\leq \infty$.
\end{theoreme}

Dans le r\'esultat suivant, on ne suppose pas que la singularit\'e de $V$ est de log-canonique.

\begin{proposition}
 Si $pm\leq n-p$, alors  l'espace de $m$-jets $V_m$ est localement une intersection compl\`ete. En particulier le 
 morphisme $\mJet{W}{S}{m}\rightarrow S$ est une d\'eformation de $V_m$.
\end{proposition}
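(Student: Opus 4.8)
The plan is to reduce the statement to a dimension/codimension count on the defining equations of $V_m$, and then invoke the standard fact that a subscheme of affine space cut out by $c$ equations is a local complete intersection as soon as its codimension equals $c$ everywhere. First I would recall the explicit description of the $m$-jet space of $V$: since $V\subset(\AF^n_{\KK},0)$ is cut out by $\f_1(x),\dots,\f_p(x)$, the scheme $V_m$ sits inside $(\AF^n_{\KK})_m\cong\AF^{n(m+1)}_{\KK}$ with coordinates $a_{ij}$, $1\le i\le n$, $0\le j\le m$ (the coefficients of the truncated arc $x_i(t)=\sum_{j=0}^m a_{ij}t^j$), and is defined by the $p(m+1)$ equations obtained by expanding $\f_k(x_1(t),\dots,x_n(t))\equiv 0\ \mathrm{mod}\ t^{m+1}$ and collecting the coefficient of each power $t^0,\dots,t^m$. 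So $V_m$ is cut out by exactly $p(m+1)$ equations inside a smooth ambient space of dimension $n(m+1)$.

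Next I would bound the dimension of $V_m$ from below. Every irreducible component of a scheme defined by $p(m+1)$ equations in $\AF^{n(m+1)}_{\KK}$ has dimension at least $n(m+1)-p(m+1)=(n-p)(m+1)$; this is Krull's Hauptidealsatz applied repeatedly. Hence $V_m$ is a local complete intersection precisely when every component has dimension \emph{equal} to $(n-p)(m+1)$, i.e. when $V_m$ has pure dimension $(n-p)(m+1)$ and the $p(m+1)$ equations form a regular sequence locally. To get the reverse inequality $\dim V_m\le (n-p)(m+1)$ I would use the truncation (jet-projection) morphisms $\pi_{m,m-1}\colon V_m\to V_{m-1}$ and analyze their fibers. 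Over the smooth locus $V\setminus\{0\}$ of $V$ each $\pi_{m,m-1}$ is a Zariski-locally trivial fibration with fiber $\AF^{n-p}_{\KK}$, so the part of $V_m$ lying over $(V\setminus\{0\})_{m-1}$ has dimension $(n-p)+(n-p)m=(n-p)(m+1)$ by induction; the issue is the part lying over the jets based at the singular point $0$. Those jets are truncated arcs all of whose coefficients satisfy the vanishing of $\f_k$ and its derivatives to high order, and one estimates that this locus is cut out, inside the affine space $\AF^{n(m+1)}_{\KK}$, by enough independent conditions: the hypothesis $pm\le n-p$, equivalently $p(m+1)\le n$, is exactly what guarantees that the "bad" stratum cannot have excess dimension.

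The cleanest way to run this last step is to observe that $p(m+1)\le n$ means the number of defining equations of $V_m$ is at most $n$, which is at most $n(m+1)$ minus $(n-p)(m+1)$ with plenty of room; more precisely, I would argue directly that the Jacobian of the $p(m+1)$ jet-equations with respect to the $a_{ij}$ has maximal rank $p(m+1)$ along all of $V_m$. The key linear-algebra observation: the coefficient of $t^k$ in $\f_l(x(t))$ involves the top-order coefficients $a_{i,k}$ linearly through $\sum_i \partial_{x_i}\f_l(a_{\bullet 0})\,a_{i,k}$ plus terms in the lower $a_{ij}$. Since $V$ itself has an isolated singularity at $0$, at any point of $V\setminus\{0\}$ the Jacobian $(\partial_{x_i}\f_l)$ has rank $p$, giving rank $p(m+1)$ for the block-triangular jet Jacobian over $(V\setminus\{0\})_m$; and over $0$, I would use the condition $p(m+1)\le n$ to show — by the non-degeneracy inherent in $V$ being a reduced complete intersection, so that the ideal $(\f_1,\dots,\f_p)$ together with the maximal ideal has the expected colength behaviour — that the contribution of the singular fiber still cannot force a drop in rank on a component. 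Once maximal rank of the Jacobian is established on all of $V_m$, $V_m$ is smooth of the expected codimension off a proper closed subset and, being cut out by the expected number of equations with these having a regular-sequence presentation locally, is everywhere a local complete intersection.

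The main obstacle I anticipate is precisely the analysis over the singular point: one must rule out an embedded or excess-dimensional component of $V_m$ supported on the fiber $\pi_{m,0}^{-1}(0)$, and the inequality $pm\le n-p$ has to be used in an essential, non-wasteful way to do so. I expect the author handles this either by the Jacobian-rank argument sketched above or by an inductive dimension count on the tower $V_m\to V_{m-1}\to\cdots\to V$, in both cases the numerology $p(m+1)\le n$ being the exact threshold. The conclusion about $\mJet{W}{S}{m}\to S$ being a deformation of $V_m$ is then immediate from Proposition~\ref{pr:def-mjets-lic}, whose hypothesis — that $V_m$ is a local complete intersection — we have just verified.
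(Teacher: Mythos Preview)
Your setup is correct: $V_m$ is cut out by $p(m+1)$ equations in $\AF^{n(m+1)}_{\KK}$, so by Krull's Hauptidealsatz it is a local complete intersection if and only if $\dim V_m\le (n-p)(m+1)$ (this is exactly the criterion of Musta\c{t}\u{a}, \cite{Mus01}, Prop.~1.4, which the paper invokes). You also correctly decompose $V_m$ using the projection $\pi_m:V_m\to V$ into the part over $V\setminus\{0\}$, whose closure has dimension $(n-p)(m+1)$, and the fiber $\pi_m^{-1}(0)$.

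The gap is in your treatment of $\pi_m^{-1}(0)$. Your proposed Jacobian-rank argument cannot succeed: establishing maximal rank $p(m+1)$ of the jet Jacobian at every point of $V_m$ would prove that $V_m$ is \emph{smooth}, which is strictly stronger than LCI and generally false. Indeed, at the zero jet (all $a_{ij}=0$) the Jacobian of the jet equations vanishes identically whenever the $\f_l$ have no linear part at $0$, so the rank certainly drops there. The vague appeal to ``non-degeneracy inherent in $V$ being a reduced complete intersection'' does not repair this.

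What you are missing is much simpler than either a Jacobian computation or an induction along the tower: the fiber $\pi_m^{-1}(0)$ consists of $m$-jets with $a_{i,0}=0$ for all $i$, hence sits inside an affine space of dimension $nm$ and therefore $\dim\pi_m^{-1}(0)\le nm$ trivially. The hypothesis $pm\le n-p$ is exactly the inequality $nm\le (n-p)(m+1)$, so $\dim V_m=\max\bigl((n-p)(m+1),\dim\pi_m^{-1}(0)\bigr)\le (n-p)(m+1)$, and Musta\c{t}\u{a}'s criterion finishes. This is the paper's argument; it is three lines once the setup (which you had) is in place. The final sentence about $\mJet{W}{S}{m}\to S$ then follows from Proposition~\ref{pr:def-mjets-lic} exactly as you say.
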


\begin{proof}
Soit $\pi_m:V_m\rightarrow V$ la projection canonique. On a donc la d\'ecomposition suivante: 

\begin{center}
 $V_m=\pi_m^{-1}(\{0\})\cup \overline{\pi_m^{-1}(V\backslash \{0\})}$
\end{center}

On remarque que $\overline{\pi_m^{-1}(V\backslash \{0\})}$ est une composante irr\'eductible de $V_m$
de dimension $(n-p)(m+1)$ et $\dim \pi_m^{-1}(0)\leq nm$.
En vertu de la proposition $1.4$ de \cite{Mus01}, $V_m$ est une intersection compl\`ete si et seulement 
si $\dim V_m\leq (n-p)(m+1)$. Ce qui ach\`eve la preuve de la proposition, car par hypoth\`ese on a  
$nm \leq (n-p)(m+1)$.
\end{proof}

Maintenant, on  suppose que  $S=\spec \KK[[s]]$ (ou $S:=\spec \KK[s]_{(s)}$), et soit  $V$ une vari\'et\'e quelconque
et $W\rightarrow S$ une d\'eformati\'on 
de $V$. Le morphisme $\mjp{m}:\mJet{W}{S}{m}\rightarrow S$, $m\geq 0$, est plat si et seulement si pour tout
{\it point associ\'e} $p\in \mJet{W}{S}{m}$ (c'est-\`a-dire,  tous les \'el\'ements de l'id\'eal maximal du point $p$ sont des
diviseurs du z\'ero), le point $\mjp{m}(p)$ est le point g\'en\'erique de $S$ (voir la page $257$ de \cite{Har77}).
Une partie de ce r\'esultat peut s'\'ecrire de la fa\c{c}on suivante:

\begin{lemme} \label{le:plat-arcs} On suppose que pour tout point $p\in  V_m\subset  \mJet{W}{S}{m}$, il existe un arc 
$\alpha:\spec K[[s]] \rightarrow \mJet{W}{S}{m}$, o\`u $K$ est  une extension du corps $\KK$, tel que l'image par 
$\alpha$ (resp. par $\mjp{m}\circ \alpha$) du point ferm\'e $0$  (resp. point g\'en\'erique $\eta$ ) de
$ \spec K[[s]]$ est le point $p$ (resp.  le point g\'en\'erique de $S$). Alors le morphisme  
$\mjp{m}:\mJet{W}{S}{m}\rightarrow S$ est plat.
\end{lemme}
\begin{proof}
On suppose qu'il existe un point associ\'e $p\in  \mJet{W}{S}{m}$ tel que $\mjp{m}(p)$ soit le point ferm\'e de $S$, alors
$p\in V_{m}$. Comme $p$ est un point  associ\'e, on a  $\alpha(0)=\alpha(\eta)=p$. Ce qui est  une contradiction, 
car $\mjp{m}(\alpha(\eta))$ est le point g\'en\'erique de $S$.
\end{proof}

Le lemme  \ref{le:plat-arcs} et la propri\'et\'e fonctorielle de l'espace $\mJet{W}{S}{m}$
motive la d\'efinition suivante:

\begin{definition}
Soit $m$ (resp. $n$)   un entier  sup\'erieur ou \'egal \`a $0$ (resp. $3$).
Un morphisme   $\phi:\spec K[t]/(t^{m+1})\rightarrow \AF_{\KK}^n$ est appel\'e $(K,m)$-jet sur  $\AF_{\KK}^n$.

On dira que un $S$-morphisme  $\Phi:\spec K[[s]][t]/(t^{m+1})\rightarrow \AF_{S}^n$ est une d\'eformation d'un 
 $(K,m)$-jet $\phi$, si la sp\'ecialisation en $s=0$ de
  $\Phi$   est $\phi$.
 \end{definition}
\vspace{0.2cm}

D'abord, on fixe quelques  notations. \'Etant donn\'e  un polyn\^ome  $\varphi\in K[t]$,  on note $\ord_t\varphi$ l'ordre en $t$ du polyn\^ome  $\varphi$ et on d\'efinit le $m$-ordre de $\varphi$, not\'e $\ord_t^m\varphi$, comme suit:

\begin{center}
 $\ord_t^m\varphi:=\left \{ \begin{array}{cl}
                       \infty &\mbox{si}\; \varphi\equiv 0\mod (t^{m+1})\\
                        \ord_t\varphi    & \mbox{sinon}
                      \end{array}\right.$

\end{center}
On remarque qu'on peut d\'efinir $\ord_t^m$ sur l'anneau  $K[t]/(t^{m+1})$.

\'Etant donn\'e le $(K,m)-jet$, $\phi:\spec K[t]/(t^{m+1})\rightarrow \AF_{\KK}^n$, on d\'efinit  le $m$-ordre, $\ord_t^m \phi$, de $\phi$ de la fa\c con suivante:
\begin{center}
$\ord_t^m \phi :=(\ord_{t}^m\; \phi^{\star }(x_1),...,\ord_{t}^m\; \phi^{\star }(x_{n}))$,
\end{center}
 o\`u $\phi^{\star}$ est le co-morphisme  de $\phi$.
 
Avec la notation standard des multi-indices (c'est-\`a-dire  on note $x^e=x_1^{e_1}\cdots x_n^{e_n}$ pour  $e=(e_1,...,e_n)\in \ZZ_{\geq 0}^n$), on consid\`ere la s\'erie formelle suivante:\\
\begin{equation*} \tau:=\sum\limits_{\mbox{\small{{\it e $\in \ZZ_{\geq 0}^n$}}}}c_ex^e,
\end{equation*}
o\`u $c_e \in K$  pour tout $e\in \ZZ_{\geq 0}^l$.
L'ensemble $\mathcal{E} (\tau):=\{e\in \ZZ_{\geq 0}^n\mid c_e\neq 0\}$ est l'ensemble des exposants de $\tau$. Si la s\'erie formelle $\tau$ n'est pas nulle, alors l'ensemble $\mathcal{E} (\varphi)$ n'est pas vide.\\
Soient $v\in \RR_{> 0}^n$,  on d\'efinit  le $v\mbox{-}ordre$ de $\tau$, not\'e $\nu_v\tau$, et le  $(v,m)\mbox{-}ordre$ de $\tau$, not\'e $\nu_v^m\tau$,  de la fa\c con suivante:\\
\begin{center}
$ \nu_v\tau:=\min\{v\cdot e\mid e  \in\mathcal{E}(\tau)\}$;
$\nu_v^m\tau:=\left \{ \begin{array}{cl}
                       \infty &\mbox{si}\; \nu_v\tau>m\\
                        \nu_v\tau    & \mbox{sinon}
                      \end{array}\right.$

\end{center}
\vspace{1cm}

On consid\`ere une suite de  polyn\^omes  $\f, \g_1, \g_2,...$ $\in \KK[x_1,...,x_n]$, $n\geq 3$, et on pose 
\vspace{0.3cm}
\begin{center}
$\F(x,s):=\f(x)+s\g_1(x)+s^2\g_2(x)+\cdots$,
\end{center}
o\`u $x:=(x_1,...,x_n)$ et $\f$ un polyn\^ome irr\'eductible non nul.  
Dans toute la suite, $V$ est la vari\'et\'e d\'efinie par le polyn\^ome $\f$ et $W$ la d\'eformation de $V$
d\'efinie par $\F\in\KK[x_1,x_2,...,x_m][[s]]$ (voir page 79 de \cite{EiHa00}).\\

\begin{lemme}
\label{le:def-mjet}  Soit $\phi:\spec K[t]/(t^{m+1})\rightarrow V$ un $(K,m)$-jet et  $v=\ord_t\phi$. On suppose que
\begin{center}
  $\nu_v^m\f=\min \{\ord_t^m \phi^{\star}(x_i\partial_i\f(x))\mid 1\leq i \leq n \}$ et  que
  $\nu_v^m\f\leq \nu_v^m\g_j $, pour tout $1\leq j$, 
\end{center}  
   Alors, il existe une d\'eformation $\Phi$  du  $(K,m)$-jet $\phi$ tel que $ \Phi^{\star}(\F(x,s)) \equiv 0\!\!\mod\! (t^{m+1}) $.

 \end{lemme}
 \begin{remarque}[Polyn\^omes de Pham-Brieskorn]
  \label{re:pham-brieskorn}
 On remarque que dans le cas o\`u  \begin{center} $\f:=x_1^{a_1}+x_2^{a_2}+\cdots+x_2^{a_k}+\cdots+ x_n^{a_n}$, $a_k>1$, \end{center}
et les polyn\^omes  $\g_i$, $1\leq i$,  
appartiennent \`a la cl\^oture int\'egrale de l'id\'eal engendr\'e par 
$x_1^{a_1},x_2^{a_2},\cdots, x_n^{a_n}$, les hypoth\`eses du Lemme \ref{le:def-mjet} sont toujours v\'erifi\'ees.
 \end{remarque}

\begin{proof}
Soit  $\phi_i:=\phi^{\star}(x_i)$, $1\leq i\leq n$,  et on  pose $\Phi_i:= \phi_i+ s\psi_{1 i}+s^2\psi_{2 i}+
s^3\psi_{3 i}+\cdots$, $1\leq i\leq n$, o\`u les $\psi_{j i}$ appartient \`a 
$K[t]/(t^{m+1})$. On peut donc d\'efinir une {\rm d\'eformation} $\Phi$ du  $(K,m)$-jet $\phi$ tel que 
$\Phi^{\star}(x_i):=\Phi_i$, $1\leq i\leq n$.\\

L'id\'ee de la d\'emonstration est de   montrer qu'on peut choisir les  $\psi_{j i}\in K[t]/(t^{m+1})$  
de fa\c con que $\F(\Phi_1,...,\Phi_n,s)\equiv 0 \mod (t^{m+1})$.\\

 On pose  $\overline{\Phi}:=(\Phi_1,...,\Phi_n)$,  $\overline{\phi}:=(\phi_1,...,\phi_n)$, $\overline{\Psi}:=\overline{\Phi}-\overline{\phi}$
 et $\overline{\psi}_j:=(\psi_{j 1},...,\psi_{j n})$.
On remarque qu'on a toujours la formule suivante:

\begin{center}
 $ \F(\overline{\Phi},s)=\f(\overline{\phi})+s(\sum \partial_i\f(\overline{\phi})\psi_{1 i}+
 T_1))+ s^2(\sum \partial_i\f(\overline{\phi})\psi_{2 i}
 + T_{2})+\cdots +s^j(\sum \partial_i\f(\overline{\phi})\psi_{j i}
 + T_{j})+\cdots$
\end{center}
o\`u  $T_j$, $j\geq 1$,  est d\'efini de la mani\`ere suivante:   
 $$  T_{j}:= \g_j(\overline{\phi})+  \sum_{l=1}^{j-1}\sum_{1\leq \mid \alpha \mid \leq j-l}\tfrac{1}{\alpha! }
 \partial^{\alpha}\g_{l}(\overline{\phi})I_{j-l}(\alpha)+ 
\sum_{2\leq \mid \alpha \mid \leq j}\tfrac{1}{\alpha! }\partial^{\alpha}\f(\overline{\phi})I_{j}(\alpha),$$  
o\`u   $\partial_{\alpha}:=\partial_1^{\alpha_1}\cdots \partial_n^{\alpha_n}$, 
$\mid \alpha\mid :=\sum \alpha_i$, $\alpha !=\alpha_1!\cdots \alpha_n!$,
et $I_k(\alpha):=\frac{1}{k!}\partial^{k}_{s}\overline{\Psi}^{\alpha}\mid_{ s=0}$, pour  $1\leq k \leq j$ et $(\alpha_1,...,\alpha_n)\in \ZZ_{\geq 0}^n$.\\

On remarque  que si $1\leq \mid\alpha\mid$ 
(resp.  $2\leq \mid\alpha\mid$) et $k'>k$ (resp. $k'\geq k$), alors $I_k(\alpha)$ ne d\'epend pas de $\overline{\psi}_{k'}$.\\
 
On va proc\'eder par r\'ecurrence sur $j$ pour  montrer la propri\'et\'e suivante:\\

($\star$) Il existe  $\overline{\psi}_j$,  
tel que 
 $\ord_t^m(\sum \partial_i\f(\overline{\phi})\psi_{j i}
 + T_{j})=\infty$ et  $\ord_t^m\phi_i\leq \ord_t^m \psi_{j i}$, pour tout  $1\leq i\leq n$.\\

Mais d'abord, on fait une remarque que l'on utilisera fr\'equemment dans cette
d\'emonstration:\\
\begin{remarque}
 \label{re:lemma-def-mjet}
Soient $p,q,r\in K[t]/(t^{m+1})$ tel que $\ord_t^m pq\leq \ord_t^mr$, on remarque qu'il existe  $w\in K[t]/(t^{m+1})$ tel que 
$\ord_t^m (pw-r)=\infty$ et $\ord_t^m q\leq \ord_t^m w$.\\
\end{remarque}

Par hypoth\`ese, on a   
$\nu_v^m\f=\min \{\ord_t^m \phi^{\star}(x_i\partial_i\f(x))\mid 1\leq i \leq n \}$ et  $\nu_v^m\f\leq \nu_v^m\g_1 $, 
$v=\ord_t\phi$.  Comme  $\nu_v^m\g_1 \leq \ord_t^m \g_1(\overline{\phi})$, on obtient qu'il existe un entier
$1\leq i_0\leq n$ tel que
$\ord_t^m\partial_{i_0}\f(\overline{\phi})\phi_{i_0}\leq \ord_t^m\g_1(\overline{\phi})$. 
En utilisant la Remarque \ref{re:lemma-def-mjet}, on obtient qu'il existe  $\overline{\psi}_{1}$ tel que  $\ord_t^m(\sum \partial_i
\g_0(\overline{\phi})\psi_{1 i}
 + T_{1})=\infty$, o\`u $\ord_t^m\phi_i\leq \ord_t^m \psi_{1 i}$, pour tour $1\leq i\leq n$.\\

 On peut donc supposer  qu'il existe un entier  $j_0$ tel que la propri\'et\'e ($\star$) est v\'erifi\'ee  pour tout $j<j_0$.
 On va montrer que la propri\'et\'e ($\star$)  est v\'erifi\'ee pour $j_0$.\\

Comme  $\ord_t^m \phi_i\leq \ord_t^m \psi_{j i}$, pour tout $1\leq i\leq n$ et $1\leq j< j_0$, on a 
$\nu_v^m\f\leq \ord_t^m\partial^{\alpha}\f(\overline{\phi})I_{j_0}(\alpha)$  (resp.
$\nu_v^m\g_l\leq \ord_t^m\partial^{\alpha}\g_l(\overline{\phi})I_{j_0-l}(\alpha)$, $1\leq l<j_0$,) 
pour tout  $\alpha\in \ZZ^n_{\geq 0}$ tel que $2\leq \mid \alpha \mid \leq j_0$  (resp.  $1\leq \mid \alpha \mid \leq j_0-l$). Par cons\'equent, on a: $\nu_v^m \f \leq \ord_t^mT_{j_0}$.

Comme  $\nu_v^m \f \leq \ord_t^m T_{j_0}$  et 
$\nu_v^m\f=\min \{\ord_t^m \phi^{\star}(x_i\partial_i\f(x))\mid 1\leq i \leq n \}$, on obtient qu'il existe 
$\overline{\psi}_{j_0}$ tel que  $\ord_t^m(\sum \partial_i\f(\overline{\phi})\psi_{j_0 i}
 + T_{j_0})=\infty $ et  $\ord_t^m \phi_{i}\leq \ord_t^m\psi_{j_0 i}$, pour tout $1\leq i\leq n$ 
 (voir la remarque \ref{re:lemma-def-mjet}).
Ceci ach\`eve la d\'emonstration du lemme  
\end{proof}

Le th\'eor\`eme suivant  est une cons\'equence imm\'ediate des lemmes \ref{le:plat-arcs} et \ref{le:def-mjet} 
\begin{theoreme}
 
 Soit 
 $\f:=x_1^{a_1}+x_2^{a_2}+\cdots+x_2^{a_k}+\cdots+ x_n^{a_n}$, $a_k>1$, 
et on suppose que les polyn\^omes  $\g_i$, $i\geq 1$,  
appartiennent \`a la cl\^oture int\'egrale de l'id\'eal engendr\'e par 
$x_1^{a_1},x_2^{a_2},\cdots, x_n^{a_n}$. 
Alors, le morphisme $\mjp{m}  :  \mJet{W}{S}{m}\rightarrow S$ est plat pour tout $0\leq m\leq \infty$.
Autrement dit,  $\mJet{W}{S}{m}$ est une d\'eformation de $V_m$.

\end{theoreme}

\begin{remarque}
 On remarque que la d\'eformation $W$ de la vari\'et\'e $V$ du th\'eor\`eme ci-dessus est une d\'eformation \`a
 nombre de Milnor constant. Est-ce que la question principale de cette section a une r\'eponse 
 affirmative dans le cas de d\'eformation \`a  nombre de Milnor constat?

 Plus pr\'ecis\'ement, soient $n\geq 3$, $\f(x_1,...,x_n)$ un polyn\^ome qui n'est pas d\'eg\'en\'er\'e par rapport \`a la 
 fronti\`ere de Newton $\Gamma(\f)$ et $\F(x_1,x_2,...,x_n,s)\in \KK[x_1,..,x_n,s]$  une d\'eformation de $\f$ \`a 
 nombre de Milnor constant. Est-ce que $\mJet{W}{\AF_{\KK}}{m}$ est une d\'eformation de $V_m$?\\

\end{remarque}

\subsection{Familles d'espaces d'arcs et l'application de Nash}

\label{ssec:Fam-apl_nash}
 Dans cette section,  $V$ est une vari\'et\'e normale ayant
une unique singularit\'e isol\'ee, not\'ee $0$, et $W$ est une d\'eformation de $V$ qui admet
une {\it r\'esolution simultan\'ee \`a plat} (voir \cite{Tei80}).
La  d\'efinition suivante est plus restrictive que celle de l'article ant\'erieurement cit\'e, mais
 elle est la plus adapt\'ee \`a nos probl\`emes.\\
 
Soit $S$ un sch\'ema et  $s\in S$ un point. Par abus de notation, on note $\{s\}$ le sous-sch\'ema de $S$ associ\'e au point $s$.   
Soit  $0\in S$ un point ferm\'e de $S$.  Une {\it d\'eformation $W$ de $V$ sur $S$ le long une section} $\sigma:S\rightarrow W$ est
un  diagramme commutatif:
\begin{center}
\begin{tikzpicture}[node distance=2cm, auto]
  \node (W) {$W$};
  \node (D)[right of=W]{$S$};
   \draw[->, bend right] (D) to node[swap] {$\sigma$} (W);
    \draw[->] (W) to node {} (D);
 
\end{tikzpicture}
\end{center}
o\`u le morphisme $W\rightarrow V$ est plat \`a fibres r\'eduits et  
$V\cong W_0:=W\times_{S}\{0\}$.
\begin{definition}
\label{def-equiv} Soit 
$\pi_0 : \widetilde{V} \rightarrow V$  une d\'esingularization telle que 
la fibre exceptionnelle $\pi_0^{-1}(0)=\E_1\cup \E_2\cdots\cup \E_l$ est un diviseur 
\`a croisements normaux et  $W$  une d\'eformation de $V$ sur $S$ le long une section 
$\sigma:S\rightarrow W$. On suppose que $\sigma(s)$, $s\in S$, est l'unique 
point singulier de la vari\'et\'e $W_s:=W\times_S\{s\}$ . 
La d\'eformation $W$  admet une r\'esolution simultan\'ee \`a plat, 
s'il existe un diagramme commutatif:

\begin{center}
\begin{tabular}{lr}

 $\shorthandoff{;:!?} 
\xymatrix @!0 @R=1.5pc @C=2pc{  &&& 
\E_1,\E_2,\cdots, \E_l \ar@{^{(}->}[rrrr] \ar@{^{(}->}[dd]&& & 
&\widetilde{\E}_1,\widetilde{\E}_2,\cdots, \widetilde{\E}_l \ar@{^{(}->}[dd]\\
 (\star) &&& &&& \\
 &&& \widetilde{V} \ar[dd] \ar[rd] \ar@{^{(}->}[rrrr] &&& &  \widetilde{W} 
 \ar[dd] \ar[rd]^{\pi} &
\\
 &&&  &V\ar[ld] \ar@{^{(}->}[rrrr] &&&& W \ar[ld]\\
&&& 0 \ar@{^{(}->}[rrrr]  & &&& S}$  &  
$\shorthandoff{;:!?}    \xymatrix @!0 @R=2pc @C=2pc{
(\widetilde{\E}_1)_s,...,(\widetilde{\E}_l)_s \ar@{^{(}->}[rrr] &&&
\widetilde{W}_s \ar[dd]^{\pi_s}&\hspace{1.2cm}:=\widetilde{W}\times_S\{s\}\\
&&&&\\
 &&& W_s \ar[d]&\hspace{1.2cm}:= W\times_S\{s\} \\
   && & s & }$   
\end{tabular}
\end{center}
o\`u  le morphisme  $\pi:\widetilde{W}\rightarrow W$ est
une modification propre de $W$  telle que  $\pi_s:\widetilde{W}_s\rightarrow W_s$
est une d\'esingularization, o\`u que la fibre exceptionnelle, $\pi^{-1}(\sigma(s))=(\widetilde{\E}_1)_s\cup 
(\widetilde{\E}_2)_s\cdots\cup (\widetilde{\E}_l)_s$, est un diviseur \`a 
croisements normaux,  et le sch\'ema $\widetilde{W}$ 
(resp. la fibre sch\'ematique  $\widetilde{W}\times_W\sigma(S)$)
est une d\'eformation de $\widetilde{V}$ (resp. de la fibre sch\'ematique 
$\widetilde{V}\times_V \{0\}$) sur $S$.
\end{definition}

\begin{exemple} Soit 
$\f \in \KK[x_1,...,x_n]$, $n\geq 4$, un polyn\^ome irr\'eductible et non d\'eg\'en\'er\'e 
par rapport \`a la fronti\`ere de Newton, et soient  $\g_1$,..,$\g_m$  
des polyn\^omes de  $\KK[x_1,...,x_n]$ tels que tous les exposants
de chaque polyn\^ome $\g_i$, $1\leq i\leq m$, 
appartient  au poly\`edre de Newton $\Gamma_{+}(\f)$.
On pose $\F(x,s):=\f(x)+\sum s_i\g_i(x)$, o\`u $s:=(s_1,...,s_m)\in S:= \AF_{\KK,0}^m$ et $x\in\AF_{\KK}^n$.

Soit $V$ (resp. $W$) la vari\'et\'e d\'efinie par $\f$ (resp. $\F$). On remarque que le morphisme naturel 
$W\rightarrow S$ est une d\'eformation de $V$. Si $V$ ne contient aucune des $T$ orbites de
$\AF_{\KK}^n$, alors  la Proposition \ref{pr:cr_nor-f} montre que $W$ admet une r\'esolution simultan\'ee \`a plat.
En fait, \`a un changement de base pr\`es,  toute d\'eformation d'une  hypersurface quasi-homog\`ene  $V$ qui admet une r\'esolution simultan\'ee 
\`a plat \'equisingulier est obtenue de cette fa\c con. Voir
\cite{Kou76}, \cite{Osh87},  \cite{SaTo04} et \cite{Var82}.
\end{exemple}
  
 Pour chaque diviseur $\widetilde{\E}_i$, on note $\N(\widetilde{\E}_i)$ 
 l'adh\'erence dans $\arcSpace{W}{S}$ de l'ensemble suivant:
 
  $$\{\alpha\in \arcSpace{W}{S}\backslash \arcSpace{(\sing W) }{S}\mid 
  \widehat{\alpha}(0)\in \widetilde{\E}_i \}$$
  o\`u $\widehat{\alpha}$ est le rel\`evement \`a $\widetilde{W}$  de l'arc $\alpha$. 
  \'Etant donn\'e $s\in S$, on pose $\N(\widetilde{\E}_i)_s:= \N(\widetilde{\E}_i)\times_S\{s\}$.
  On remarque que $\N(\E_i)\subset \N(\widetilde{\E}_i)_0\subset V_{\infty}^s $ 
\begin{lemme}  \label{le:NE-irr} On conserve la notation employ\'ee auparavant  et, en plus,
on suppose que $S$ est une vari\'et\'e sur $\KK$. Alors 
 $\N(\widetilde{\E}_k)$ est irr\'eductible  pour tout $k \geq 1$.
\end{lemme}
\begin{remarque}
\label{re:Ns-dense}
 D'apr\`es le lemme ci-dessus, si $s$ est le point g\'en\'erique de $S$, alors l'ensemble $\N(\widetilde{\E}_k)_s$ est dense dans 
 $\N(\widetilde{\E}_k)$ pour tout $k\geq 1$.
\end{remarque}

  \begin{proof} 
   Soit $\pi_{\infty}:\arcSpace{\widetilde{W}}{S}\rightarrow \arcSpace{W}{S}$ le morphisme induit 
 par $\pi$. On pose  $\M(\widetilde{\E}_k):= \{\beta\in \arcSpace{\widetilde{W}}{S} \mid 
  \beta(0)\in \widetilde{\E}_k \}$ et  on remarque que 
  $\N(\widetilde{\E}_k)=\overline{\pi_{\infty}(\M(\widetilde{\E}_k))}$. Il suffit donc de montrer que $\M(\widetilde{\E}_k)$
  est irr\'eductible.
  
  \'Etant donn\'e  un ouvert $U\subset \widetilde{W}$, on peut supposer que  $\mJet{U}{S}{m}$ est un ouvert de  
  $\mJet{\widetilde{W}}{S}{m}$,  $0\leq m\leq \infty$ (voir le Th\'eor\`eme 4.3 de  \cite{Voj07}). 
  
  Comme $\widetilde{W}$ et $S$ sont de type fini sur $\KK$, le morphisme $\widetilde{W}\rightarrow S$ 
  est un morphisme lisse de dimension relative $d=\dim \widetilde{W}-\dim{S}$ (voir le Th\'eor\`eme $10.2$ de \cite{Har77}).
 Alors, en vertu de la Proposition $5.10$ de  \cite{Voj07}, pour chaque $x\in \widetilde{\E}_k$  il existe un ouvert 
 $x\in U_x\subset \widetilde{W}$ et un  $U_x$-isomorphisme,  $\mJet{U_{\it x}}{S}{m}\cong \AF^{dm}_{U_x}$, 
 $0\leq m \leq \infty$. En particulier $\M(\widetilde{\E}_k)\cap \mJet{U_{\it x}}{S}{\infty}$ est un cylindre sur 
 $\widetilde{\E}_k\cap U$.
 Comme $\widetilde{\E}_k\cap U$  est irr\'eductible, on obtient que 
 $\M(\widetilde{\E}_k)\cap \mJet{U_{\it x}}{S}{\infty}$  
 est irr\'eductible pour chaque $x\in \widetilde{\E}_k$. 
 Ce qui implique que   $\M(\widetilde{\E}_k)$ est irr\'eductible.
  \end{proof}

\begin{proposition} 
\label{pro:nash-equiv-def}
Soient $S=\AF_{\KK,0}$ et  $s$  le point g\'en\'erique de $S$. Si le diviseur $\E_i$ appartient \`a l'image de l'application de
Nash $\mathcal{N}_{V}$, alors $\N(\widetilde{\E}_i)_s\not \subset \N(\widetilde{\E}_j)_s$ pour tout $j\neq i$.
\end{proposition}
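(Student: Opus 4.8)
Je pars de la caractérisation de l'appartenance à l'image de l'application de Nash en termes de relèvement des $K$-wedges admissibles, rappelée à la fin de la section~\ref{ssec:nash} (le résultat de Reguera, voir \cite{Reg06} et \cite{Lej80}). Puisque $\E_i$ est dans l'image de $\mathcal{N}_V$, tout $K$-wedge admissible centré en $\N(\E_i)$ se relève à $\widetilde{V}$. L'idée est de montrer que si l'on avait $\N(\widetilde{\E}_i)_s\subset \N(\widetilde{\E}_j)_s$ pour un $j\neq i$ (avec $s$ le point générique de $S=\AF_{\KK,0}$), on pourrait construire un $K$-wedge admissible sur $W_s$ centré en $\N(\widetilde{\E}_i)_s$ dont l'arc générique passerait par $\widetilde{\E}_j$ au lieu de $\widetilde{\E}_i$ ; en spécialisant ce wedge en $s=0$ (ce qui est licite grâce à la platitude de $\arcSpace{\widetilde{W}}{S}\to S$, qui découle de la lissité de $\widetilde{W}\to S$ et de la Proposition~\ref{pr:mjets-chang-base}), on obtiendrait un $K$-wedge admissible centré en $\N(\E_i)$ qui ne se relève pas à $\widetilde{V}$ — le relèvement envoie forcément l'arc générique dans $\widetilde{\E}_j$, ce qui contredit le fait que l'arc générique doit atterrir dans $\E_i$.

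Concrètement, les étapes seraient les suivantes. D'abord, j'établirais que le lieu $\N(\widetilde{\E}_i)$ est irréductible (Lemme~\ref{le:NE-irr}) et que $\N(\widetilde{\E}_i)_s$ est dense dans $\N(\widetilde{\E}_i)$ pour $s$ générique (Remarque~\ref{re:Ns-dense}) ; ceci me permet de travailler avec le point générique $\alpha_i$ de $\N(\widetilde{\E}_i)$ et de savoir que son relèvement $\widehat{\alpha}_i$ à $\widetilde{W}$ a son origine $\widehat{\alpha}_i(0)$ dans $\widetilde{\E}_i$ (et, génériquement sur $\widetilde{\E}_i$, \emph{pas} dans les autres $\widetilde{\E}_j$, puisque la fibre exceptionnelle est à croisements normaux, donc $\widetilde{\E}_i$ n'est pas contenu dans $\bigcup_{j\neq i}\widetilde{\E}_j$). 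Ensuite, supposant par l'absurde $\N(\widetilde{\E}_i)_s\subset\N(\widetilde{\E}_j)_s$, le point générique de $\N(\widetilde{\E}_i)_s$ appartiendrait à $\N(\widetilde{\E}_j)_s\subset\N(\widetilde{\E}_j)$, donc son relèvement aurait aussi son origine dans $\widetilde{\E}_j$ — mais ceci forcerait l'origine du relèvement à être dans $\widetilde{\E}_i\cap\widetilde{\E}_j$ de codimension $\geq 1$ dans $\widetilde{\E}_i$, contredisant la généricité. Puis je fabriquerais, à partir de cette situation sur la fibre $W_s$, un $K$-wedge $\omega_s$ admissible centré en $\N(\widetilde{\E}_i)_s$ : son centre est le point générique de $\N(\widetilde{\E}_i)_s$ et son arc générique est choisi dans $\arcSpace{W_s}{} \setminus \arcSpace{(\sing W_s)}{}$ de sorte que son relèvement évite $\widetilde{\E}_i$ (c'est ici que l'hypothèse d'inclusion est utilisée : l'arc générique peut être poussé vers $\widetilde{\E}_j$). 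Enfin, par spécialisation $s\to 0$ dans $\arcSpace{\widetilde{W}}{S}$ — qui est plat sur $S$ — on obtient un $K$-wedge admissible centré en $\N(\E_i)$ sur $V$ dont aucun relèvement ne peut exister, contredisant l'hypothèse $\E_i\in\operatorname{Im}\mathcal{N}_V$ via le critère de Reguera.

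L'obstacle principal sera la construction soignée du $K$-wedge $\omega_s$ et le contrôle de sa spécialisation : il faut s'assurer que (i) le centre de $\omega_s$ se spécialise bien sur le point générique de $\N(\E_i)$ (utilisant que $\N(\E_i)\subset\N(\widetilde{\E}_i)_0$ et que $\N(\widetilde{\E}_i)$ est irréductible de fibre générique $\N(\widetilde{\E}_i)_s$), et que (ii) la propriété « l'arc générique ne se relève pas dans $\widetilde{\E}_i$ » est stable par spécialisation — ce qui revient à un argument de semi-continuité sur l'espace d'arcs relatif $\arcSpace{\widetilde{W}}{S}$, où la platitude sur $S=\AF_{\KK,0}$ et le critère de platitude par les points associés (utilisé dans le Lemme~\ref{le:plat-arcs}) jouent un rôle clé. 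Le point délicat est que la notion d'admissibilité fait intervenir le corps résiduel du point générique de $\N(\widetilde{\E}_i)_s$, donc il faudra peut-être étendre les scalaires à une clôture convenable avant de spécialiser, et vérifier la compatibilité des relèvements le long de cette extension.
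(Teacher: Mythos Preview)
Your approach diverges sharply from the paper's, and it contains a genuine gap that I do not see how to close.

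\textbf{The gap in your direct argument.} In your second paragraph you write: ``le point g\'en\'erique de $\N(\widetilde{\E}_i)_s$ appartiendrait \`a $\N(\widetilde{\E}_j)_s\subset\N(\widetilde{\E}_j)$, donc son rel\`evement aurait aussi son origine dans $\widetilde{\E}_j$.'' This inference is false. By definition $\N(\widetilde{\E}_j)$ is the \emph{closure} of the set of arcs whose lift hits $\widetilde{\E}_j$; a point in that closure need not itself have this property. Indeed, if your argument were valid it would prove $\N(\widetilde{\E}_i)_s\not\subset\N(\widetilde{\E}_j)_s$ for all $i\neq j$ with no hypothesis on the Nash map at all, which is certainly too strong.

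\textbf{The wedge--specialization plan.} The remaining strategy (build an admissible $K$-wedge on $W_s$, specialize to $s=0$, contradict Reguera's criterion) has two structural problems. First, a $K$-wedge on $W_s$ has $K\supset\kappa(s)=\mathrm{Frac}(\mathcal{O}_{S,0})$; there is no ring map $K\to\KK$, so ``specializing to $s=0$'' is not a well-defined operation on the wedge --- you would need instead a wedge defined over $S$ itself and a limit argument, which is a different and harder construction. Second, even granting a specialization, your reason for non-lifting (``l'arc g\'en\'erique doit atterrir dans $\E_i$'') is not what admissibility requires: an admissible wedge centred at $\N(\E_i)$ only needs its generic arc to lie in $V_\infty^s$, not in $\N(\E_i)$, so a lift whose generic arc lands in $\E_j$ is perfectly allowed. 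You never use the hypothesis $\E_i\in\mathrm{Im}\,\mathcal{N}_V$ in a way that produces a contradiction.

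\textbf{What the paper does instead.} The paper avoids wedges entirely. It uses only the elementary reformulation that $\E_i\in\mathrm{Im}\,\mathcal{N}_V$ means $\N(\E_i)\not\subset\N(\E_k)$ for all $k\neq i$. Then it truncates to finite $m$-jet level: setting $\CS_k^m:=\overline{\mts{m}(\N(\widetilde{\E}_k))}$ and $\C_k^m:=\overline{\mt{m}(\N(\E_k))}$, one argues as follows. From $\N(\widetilde{\E}_i)_s\subset\N(\widetilde{\E}_j)_s$ and density (Remarque~\ref{re:Ns-dense}) one gets $\N(\widetilde{\E}_i)\subsetneq\N(\widetilde{\E}_j)$, hence $\CS_i^m\subsetneq\CS_j^m$ for $m\gg0$, so $\dim\CS_i^m<\dim\CS_j^m$. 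Since each $\CS_k^m$ is irreducible and dominates the regular one-dimensional base $S=\AF_{\KK,0}$, the map $\CS_k^m\to S$ is automatically flat (Hartshorne, Prop.~III.9.7), so fibre dimensions are constant. This forces $\dim\C_i^m<\dim(\CS_j^m\times_S\{0\})$, and since $\CS_j^m\times_S\{0\}\subset\overline{\mt{m}(V_\infty^s)}=\bigcup_k\C_k^m$, some $\C_{j'}^m$ strictly contains $\C_i^m$, contradicting $\N(\E_i)\not\subset\N(\E_{j'})$. The whole argument is a finite-dimensional dimension count; no wedges, no Reguera, no delicate specialization of infinite-type objects.
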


\begin{proof}
Pour chaque $m\geq 1$, soit  $\mt{m}:V_{\infty}\rightarrow V_m$ et $\mts{m}:\arcSpace{W}{S}  \rightarrow \mJet{W}{S}{m}$ les morphismes
de troncature canoniques. Pour $m\geq 0$ et $k\geq 1$, on pose $\CS_{k}^{m}:=\overline{\mts{m}(\N(\widetilde{\E}_k))}$ et
$\C_{k}^{m}:=\overline{\mt{m}(\N(\E_k))}$. Comme  $\N(\widetilde{E}_k)$,  $k\geq 1$,  est irr\'eductible (voir le lemme pr\'ec\`edent),
on obtient que $\CS_{k}^{m}$  est irr\'eductible.\\

Sans perte de g\'en\'eralit\'e, on peut supposer que $i=1$.   Comme $\E_1$ appartient \`a l'image de
 l'application de Nash $\mathcal{N}_{V}$, on a $\N(\E_1)\not\subset \N(\E_k)$ pour tout $k>1$. \\

On suppose  qu'il  existe $j>1$ tel que  $\N(\widetilde{\E}_1)_s \subset \N(\widetilde{\E}_j)_s$. 
Ce qui implique que   $\N(\widetilde{\E}_1) 
 \subset \N(\widetilde{\E}_j)$, car $\N(\widetilde{\E}_k)_s$ est dense dans 
 $\N(\widetilde{\E}_k)$ pour tout $k\geq 1$ (voir la Remarque \ref{re:Ns-dense}).\\

 Comme    $\N(\widetilde{\E}_1)\neq \N(\widetilde{\E}_j)$, il existe $m>>0$ assez grand tel que 
 $\CS_{1}^{m} \varsubsetneq  \CS_{j}^{m}$ (en particulier 
 $\dim \CS_{1}^{m}< \dim \CS_{j}^{m}$).
 De plus, on peut supposer que   $\C_{1}^{m}\not\subset \C_{k}^{m}$ et 
 $\codim(\N(\E_k))=\dim \overline{\mt{m}(V_{\infty})}-\dim \C_{k}^{m}$  pour tout $k>1$ 
 (quitte \`a remplacer $m$ par un entier plus grand).\\
 
 On rappelle que  $\CS_{k}^{m}$,  $k\geq 1$,  est irr\'eductible. Comme le morphisme induit par restriction
$\CS_{k}^{m}\rightarrow S$ est dominant et $S=\AF_{\KK,0}$, on obtient que $\CS_{k}^{m}\rightarrow S$, 
$k\geq 1$, est plat (voir la Proposition $9.7$ de \cite{Har77}).\\

Comme $\dim \CS_{1}^{m}< \dim \CS_{j}^{m}$, on obtient que
 $\dim \C_{1}^{m}<\dim( \CS_{j}^{m}\times_S\{0\})$. Ce  qui implique qu'il existe  au moins une composante irr\'eductible
de $\CS_{j}^{m}\times_S\{0\}$ qui contient strictement l'ensemble 
$\C_{1}^{m}$.
Comme $\CS_{j}^{m}\times_S\{0\} \subset \overline{\mt{m}(V_{\infty}^s)}$, il existe $j'>1$ tel que 
$\C_{1}^{m}\varsubsetneq \C_{j'}^{m}$, d'o\`u la contradiction. 
\end{proof}

Soient $v:=(v_1,...,v_n)\in \ZZ^n_{>0}$, $d\in \ZZ_{>0}$ et $\f\in \KK[x_1,...,x_n]$ un polyn\^ome quasi-homog\`ene de type 
$(s,v)$, c'est-\`a-dire $\f$ est homog\`ene de degr\'e $d$  par rapport \`a la graduation  $\nu_v x_i=v_i$. 
Soit $\h\in \KK[x_1,...,x_n]$ tel que $\nu_v\h>d$.  On note $V$ (resp. $V'$) 
l'hypersurface donn\'ee par l'\'equation 
 $\f(x_1,...,x_n)=0$ (resp. $\f(x_1,...,x_n)+\h(x_1,...,x_n)=0$).
On suppose que $V$ (resp. $V'$) a une unique singularit\'e isol\'ee \`a l'origine de $\AF_{\KK}^n$,
  $V$ (resp. $V'$)  ne contient aucune de $T$-orbite de $\AF_{\KK}^n$  et $\f$ (resp $\f_1:=\f+\h$)  est
un polyn\^ome non d\'eg\'en\'er\'e par rapport \`a \'la fronti\`ere de Newton $\Gamma(\f)$. 
On remarque  que le morphisme  $\pi:X(\Sigma)\rightarrow \AF^{n}$  est une r\'esolution plong\'ee 
des vari\'et\'es $V$ et $V'$  et que les fibres 
 exceptionnelles des d\'esingularitations   $\pi_0:X\rightarrow V$,  $\pi_1:X'\rightarrow V'$ 
 induites pour la r\'esolution plong\'ee $\pi$ sont trivialement  hom\'eomorphes.
 Dans le th\'eor\`eme suivant, par abus de notation, on utilise la m\^eme notation pour d\'esigner les composantes irr\'eductibles des fibres exceptionnelles de 
 $\pi_0$ et $\pi_1$.
 
\begin{theoreme}

 \label{th:def-appl-nash}
 
 Si le diviseur $\E$ appartient \`a l'image de l'application de Nash $\mathcal{N}_{V}$, alors $\E$ apparient l'application de 
 Nash $\mathcal{N}_{V'}$

\end{theoreme}

\begin{proof} Soient $S:=\AF_{\KK,0}$ ,
 $\F(x_1,...,x_n,s):=\f(x_1,...,x_n)+\frac{1}{s^d}\h(s^{v_1}x_1,...,s^{v_n}x_n)\in \mathcal{O}_{\AF,0}[x_1,...,x_n]$ 
 et $W$ la d\'eformation de $V$ induite par $F$. 
 Soit $s$ le point g\'en\'erique de $S$. On remarque que $W_s\cong V_1\times \{s\} $, ce qui implique que 
 $\arcSpace{W_s}{\{s\}}\cong V'_{\infty}\times \{s\}$. On suppose que $E$ 
 n'appartient pas \`a l'image de l'application de Nash $\mathcal{N}_{V'}$, alors 
 il existe une composante irr\'eductible $\E'$ de fibre exceptionnelle $\pi_1^{-1}(0)$ tel que $\N(\E)\subset \N(\E')$. D'o\`u
 $\N(\E)\times\{s\}\subset \N(\E')\times \{s\}$. Par cons\'equence, on a $\N(\widetilde{E})_s\subset \N(\widetilde{E}')_s$. Ce qui rentre en contradiction 
 avec la proposition  \ref{pro:nash-equiv-def}.
 \end{proof}

\section{Deux familles d'exemples d'hypersufaces de $\bm{\AF^4_{\KK}}$ avec l'application de Nash bijective} 
\label{sec:Deux-fam-exemp-hyp}

Dans cette section on consid\`ere deux familles d'hypersurfaces normales de $\AF_{\KK}^4$ et
on montre que pour chaque hypersurface consid\'er\'ee, l'application de Nash qui y est associ\'ee est bijective.

De plus, dans chaque cas on donne explicitement  une  d\'esingularitations o\`u tous les composantes irr\'eductibles de sa fibre exceptionnelle
sont des diviseurs essentiels. Comme on a dit dans l'introduction,  \'etant donn\'e une vari\'et\'e $V$
singuli\`ere et une d\'esingularization $\pi$ de $V$, d\'eterminer si une composante irr\'eductible de la fibre exceptionnelle
de $\pi$ est ou non un diviseur essentiel est un probl\`eme  qui est, en g\'en\'eral, difficile \`a r\'esoudre.\\

Dans les deux cas on utilise la m\^eme m\'ethode de d\'emonstration. Cette m\'ethode est une 
adaptation de celle d\'evelopp\'ee dans l'article \cite{Ley11a}.\\

Chaque famille d'hypersurface consid\'er\'ee est le sujet d'une des deux sous-sections suivantes.\\

\subsection{La premi\`ere
famille d'exemples}

\label{se:PFEA4}

Soit $V$ l'hypersurface de $\AF_{\KK}^4$ donn\'ee par une \'equation du type:

\begin{center}
 $\f(x_1,x_2,x_3,x_4):=\h_q(x_1,x_2)+\hk_{pq}(x_3,x_4)$
\end{center}
o\`u $p\geq 2$, $q\geq 2$ sont deux entiers et $\h_q$, $\hk_{pq}$ 
sont deux polyn\^omes homog\`enes sans facteur multiple. 
De plus, $\h_q$  (resp. $\hk_{pq}$) est de degr\'e $q$  (resp $pq$) et le polyn\^ome 
$\f$ n'est pas d\'eg\'en\'er\'e par rapport \`a la fronti\`ere de Newton $\Gamma(\f)$.

\begin{example} Le polyn\^ome $\f=x_1^q+x_2^q+x_3^{pq}+x_4^{pq}$, $p\geq 2$, $q\geq 2$,  
satisfait les hypoth\`eses ci-dessus.  Dans ce cas la vari\'et\'e $V$ est une hypersurface de Pham-Brieskorn.
\end{example}
 
\begin{remarque}  
\label{re:hq-hpq_no-div}
\`A un automorphisme  lin\'eaire  de  $\AF_{\KK}^4$ pr\`es,  $x_1$ et  $x_2$  (resp. $x_3$, $x_4$) ne divisent pas $\h_q$ (resp. $\hk_{pq}$).
\end{remarque}

Le r\'esultat principal de cette section est le th\'eor\`eme suivant:

\begin{theoreme}
\label{th:premier-the-hyp-A4}
 L'application de Nash $\mathcal{N}_V$ associ\'ee \`a l'hypersurface $V$ est bijective et le nombre de diviseurs essentiels sur $V$ est \'egal \`a $(p-1)q+1$.
\end{theoreme}
Le r\'esultat suivant est une cons\'equence directe des Th\'eor\`emes  \ref{th:premier-the-hyp-A4}, \ref{th:def-appl-nash} et la
Proposition \ref{pr:des_prim_famil_hyp_A4}.
Soit $\h\in \KK[x_1,x_2,x_3,x_4]$ tel que les exposants de ses mon\^omes  appartiennent \`a l'ensemble $\Gamma_{+}(\f)-\Gamma(\f)$.
On note $V'$ l'hypersurface  de  $\AF_{\KK}^4$ donn\'ee par l'\'equation $\f(x_1,x_2,x_3,x_4)+\h(x_1,x_2,x_3,x_4)=0$.

\begin{corollaire}
 L'application de Nash $\mathcal{N}_{V'}$ associ\'ee \`a l'hypersurface $V'$ 
 est bijective et le nombre de diviseurs essentiels sur $V'$ est \'egal \`a $(p-1)q+1$.
\end{corollaire}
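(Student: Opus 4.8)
Le plan est de déduire l'énoncé d'une application du Théorème~\ref{th:def-appl-nash} au couple $(\f,\h)$, ce qui exige d'abord de vérifier que la situation rentre bien dans le cadre décrit juste avant ce théorème. Le polynôme $\f=\h_q(x_1,x_2)+\hk_{pq}(x_3,x_4)$ est quasi-homogène de type $(pq,v)$ pour $v:=(p,p,1,1)$, puisque la graduation $\nu_v x_i=v_i$ rend $\h_q$ et $\hk_{pq}$ homogènes de degré $d:=pq$. Comme tous les exposants de $\f$ sont situés sur l'hyperplan $\langle v,\cdot\rangle=d$, qui découpe une face compacte de $\Gamma_{+}(\f)$, tout $e\in\Gamma_{+}(\f)$ vérifie $\langle v,e\rangle\geq d$, avec égalité uniquement lorsque $e$ appartient à cette face, c'est-à-dire à $\Gamma(\f)$; l'hypothèse ``les exposants des monômes de $\h$ sont dans $\Gamma_{+}(\f)-\Gamma(\f)$'' équivaut donc à $\nu_v\h>d$, qui est exactement la condition du Théorème~\ref{th:def-appl-nash}. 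Il en résulte $\Gamma_{+}(\f+\h)=\Gamma_{+}(\f)$, $\Gamma(\f+\h)=\Gamma(\f)$ et $(\f+\h)_{\gamma}=\f_{\gamma}$ pour toute face compacte $\gamma$ de $\Gamma_{+}(\f)$; par conséquent $\f_1:=\f+\h$ est encore non dégénéré par rapport à $\Gamma(\f_1)=\Gamma(\f)$, la variété $V'$ a une unique singularité isolée à l'origine (le polyèdre $\Gamma_{+}(\f_1)=\Gamma_{+}(\f)$ est commode par la Remarque~\ref{re:hq-hpq_no-div} et $\f_1$ est non dégénéré, cf.~\cite{Kou76}), elle est donc normale, et enfin $V'$ ne contient, comme $V$, aucune $T$-orbite de $\AF_{\KK}^4$ de dimension strictement positive. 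Toutes ces propriétés ne dépendent en effet que des restrictions de l'équation aux faces compactes du polyèdre de Newton, qui sont les mêmes pour $\f$ et pour $\f_1$. C'est cette vérification qui me paraît être le seul point non purement formel de la preuve.

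Ceci étant acquis, je fixerais la subdivision régulière admissible $\Sigma$ de l'éventail de Newton de $\f$ fournie par la Proposition~\ref{pr:des_prim_famil_hyp_A4} (c'est aussi, d'après ce qui précède, l'éventail de Newton de $\f_1$, de sorte que $\Sigma$ convient également pour $\f_1$), et je noterais $\pi:X(\Sigma)\rightarrow\AF_{\KK}^4$ le morphisme torique associé. D'après la Proposition~\ref{pr:cr_nor-f}, $\pi$ induit des désingularisations $\pi_0:X\rightarrow V$ et $\pi_1:X'\rightarrow V'$, et, comme il est rappelé avant le Théorème~\ref{th:def-appl-nash}, leurs fibres exceptionnelles $\pi_0^{-1}(0)$ et $\pi_1^{-1}(0)$ sont des diviseurs à croisements normaux homéomorphes; j'utiliserais la même notation $\E_1,\dots,\E_l$ pour leurs composantes irréductibles, qui se correspondent par cet homéomorphisme. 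La Proposition~\ref{pr:des_prim_famil_hyp_A4} donne que $\pi_0^{-1}(0)$ a exactement $l=(p-1)q+1$ composantes, et le Théorème~\ref{th:premier-the-hyp-A4} que $\mathcal{N}_V$ est bijective avec $|\Ess(V)|=(p-1)q+1$; comme $\Ess(V)$ est nécessairement inclus dans l'ensemble des composantes de la fibre exceptionnelle de $\pi_0$, on en déduit $\Ess(V)=\{\E_1,\dots,\E_l\}$, et chaque $\E_i$ est dans l'image de $\mathcal{N}_V$. Le Théorème~\ref{th:def-appl-nash} entraîne alors que chaque $\E_i$, vue comme composante de $\pi_1^{-1}(0)$, est dans l'image de $\mathcal{N}_{V'}$.

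Il resterait à conclure par une double inclusion. Comme $V'$ a pour unique point singulier l'origine, $\pi_1^{-1}(\sing V')=\pi_1^{-1}(0)=\E_1\cup\dots\cup\E_l$; un diviseur essentiel sur $V'$ étant une composante de la fibre exceptionnelle de \emph{toute} désingularisation de $V'$, donc de $\pi_1$, on a $\Ess(V')\subseteq\{\E_1,\dots,\E_l\}$. Réciproquement, l'application de Nash prend toujours ses valeurs dans l'ensemble des diviseurs essentiels, donc $\{\E_1,\dots,\E_l\}$ est inclus dans l'image de $\mathcal{N}_{V'}$, elle-même incluse dans $\Ess(V')$. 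En comparant, on voit que $\Ess(V')$, l'image de $\mathcal{N}_{V'}$ et $\{\E_1,\dots,\E_l\}$ coïncident: $\mathcal{N}_{V'}$ est surjective, et comme elle est injective (théorème de Nash) elle est bijective; les diviseurs $\E_i$ étant deux à deux distincts, le nombre de diviseurs essentiels sur $V'$ est exactement $(p-1)q+1$. Je m'attends à ce que, la vérification du premier paragraphe une fois faite, tout le reste découle formellement des résultats cités, sans aucun calcul.
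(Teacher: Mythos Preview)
Your proof is correct and follows exactly the route the paper intends: the paper states the corollary as a direct consequence of Theorems~\ref{th:premier-the-hyp-A4} and~\ref{th:def-appl-nash} together with Proposition~\ref{pr:des_prim_famil_hyp_A4}, without giving further details, and you have spelled out precisely how these three results combine. Your verification that the hypotheses of Theorem~\ref{th:def-appl-nash} are satisfied (quasi-homogeneity of $\f$ with weight $(p,p,1,1)$, the equivalence between ``exponents in $\Gamma_{+}(\f)\setminus\Gamma(\f)$'' and $\nu_v\h>d$, non-degeneracy and isolated singularity for $\f_1$, and absence of positive-dimensional $T$-orbits in $V'$) is the only substantive step, and you handle it correctly.
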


D'abord, on utilise la g\'eom\'etrie torique pour r\'esoudre la singularit\'e de $V$. 
Ensuite, on d\'emontre le Th\'eor\`eme \ref{th:premier-the-hyp-A4}.\\

 Par abus de notation, on note $0$ l'origine de $\AF_{\KK}^4$. La proposition suivante r\'esulte d'un calcul direct.
\begin{proposition}
\label{pr:premier-pr-sing-iso}
 L'hypersurface $V$ est normale et le point $0$ est son unique point singulier. De plus, $V$ ne contient aucune $T$-orbite de dimension strictement positive.
\end{proposition}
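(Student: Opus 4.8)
The statement to prove is Proposition~\ref{pr:premier-pr-sing-iso}: that $V=\{\f=0\}$ with $\f=\h_q(x_1,x_2)+\hk_{pq}(x_3,x_4)$ is normal, has $0$ as its only singular point, and contains no positive-dimensional $T$-orbit. Since $V$ is a hypersurface it is Cohen--Macaulay, so by Serre's criterion normality reduces to $V$ being regular in codimension~$1$; hence it suffices to show $\sing V=\{0\}$ and then, as a variety of dimension $3$, the singular locus has codimension $3\geq 2$. So the crux is the computation of the Jacobian ideal. I would write $\partial_{x_1}\f=\partial_{x_1}\h_q$, $\partial_{x_2}\f=\partial_{x_2}\h_q$, $\partial_{x_3}\f=\partial_{x_3}\hk_{pq}$, $\partial_{x_4}\f=\partial_{x_4}\hk_{pq}$, and note that a singular point of $V$ is a common zero of $\f$ and all four partials.

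First I would treat the pair $(x_1,x_2)$: since $\h_q$ is homogeneous of degree $q$ with no repeated factor, its only common zero with $\partial_{x_1}\h_q,\partial_{x_2}\h_q$ is the origin $(x_1,x_2)=(0,0)$ — indeed a nonzero common zero would force a linear factor of $\h_q$ to appear with multiplicity $\geq 2$ (Euler's identity $q\,\h_q=x_1\partial_{x_1}\h_q+x_2\partial_{x_2}\h_q$ gives $\h_q=0$ there too, and a repeated root of the dehomogenized polynomial). Symmetrically the common zero locus of $\partial_{x_3}\hk_{pq},\partial_{x_4}\hk_{pq}$ is $(x_3,x_4)=(0,0)$. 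Therefore any point of $\sing V$ has all coordinates zero, i.e. $\sing V\subseteq\{0\}$; and $0\in V$ clearly, with $\f$ vanishing to order $\geq q\geq 2$ there, so $0$ is genuinely singular. Combined with the codimension remark this gives normality and ``$0$ is the unique singular point''.

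For the last assertion, the $T$-orbits of $\AF_{\KK}^4$ are indexed by subsets $J\subseteq\{1,2,3,4\}$, the orbit $O_J$ being $\{x_i\neq 0 \text{ iff } i\in J\}$. I would check $V\not\supseteq O_J$ for each $J$ by exhibiting a point of $O_J$ not on $V$, or more cleanly: $V\supseteq O_J$ would force $\f$ to vanish on a Zariski-dense subset of the coordinate subspace $\{x_i=0:i\notin J\}$, hence $\f$ restricted to that subspace is identically zero. Using Remark~\ref{re:hq-hpq_no-div} (after the linear change, none of $x_1,x_2$ divides $\h_q$ and none of $x_3,x_4$ divides $\hk_{pq}$), one checks case by case that no such restriction of $\f$ vanishes identically: e.g. if $J$ misses an index from $\{1,2\}$ but meets $\{3,4\}$, the restriction still contains a nonzero part of $\hk_{pq}$; if $J\subseteq\{1,2\}$ the restriction is $\h_q(x_1,x_2)$ (or a nonzero specialization), which is not identically zero; and symmetrically. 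So no positive-dimensional $T$-orbit lies in $V$.

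**Expected main obstacle.** The only non-bookkeeping point is the claim that the Jacobian locus of a homogeneous squarefree binary form is just the origin; this is where the hypothesis ``sans facteur multiple'' is used and should be stated carefully (over the algebraically closed field $\KK$ of characteristic zero, $\h_q=\prod(\beta_i x_1-\alpha_i x_2)$ with the $[\alpha_i:\beta_i]$ pairwise distinct, and one computes $\partial_{x_j}\h_q$ directly). Everything else — the Cohen--Macaulay/Serre reduction and the finite orbit check — is routine.
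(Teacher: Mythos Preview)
Your proposal is correct and complete. The paper itself offers no proof beyond the sentence ``La proposition suivante r\'esulte d'un calcul direct'', so your argument --- Jacobian computation using the squarefree hypothesis on the binary forms together with Euler's identity, Serre's criterion via Cohen--Macaulayness of hypersurfaces, and the orbit check via Remark~\ref{re:hq-hpq_no-div} --- is exactly the direct calculation the author has in mind. One small sharpening you could make in the orbit argument: since $\h_q$ and $\hk_{pq}$ involve disjoint sets of variables, $\f|_{L_J}=0$ forces both $\h_q|_{L_J}=0$ and $\hk_{pq}|_{L_J}=0$ separately, and Remark~\ref{re:hq-hpq_no-div} shows the first happens only when $J\cap\{1,2\}=\emptyset$ and the second only when $J\cap\{3,4\}=\emptyset$; this cleans up your case-by-case phrasing into a single line.
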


En vertu des Propositions \ref{pr:cr_nor-f} et \ref{pr:premier-pr-sing-iso}, pour r\'esoudre la singularit\'e
de $V$, il suffit trouver une subdivision r\'eguli\`ere admissible de  $\EN{\f}$. 
Dans la suite, on montre explicitement une subdivision r\'eguli\`ere admissible de $\EN{\f}$.\\

La Figure \ref{fi:poly_newt_pri_fami_A4} repr\'esente la 
face compacte du poly\`edre de Newton $\Gamma_{+}(\f)$, c'est-\`a-dire la fronti\`ere de Newton 
$\Gamma(\f)$. On rappelle que $\f=\h_q(x_1,x_2)+\hk_{pq}(x_3,x_4)$ et que $x_1$ et  $x_2$  (resp. $x_3$, $x_4$)
ne divisent pas $\h_q$ (resp. $\hk_{pq}$), voir la Remarque \ref{re:hq-hpq_no-div}.

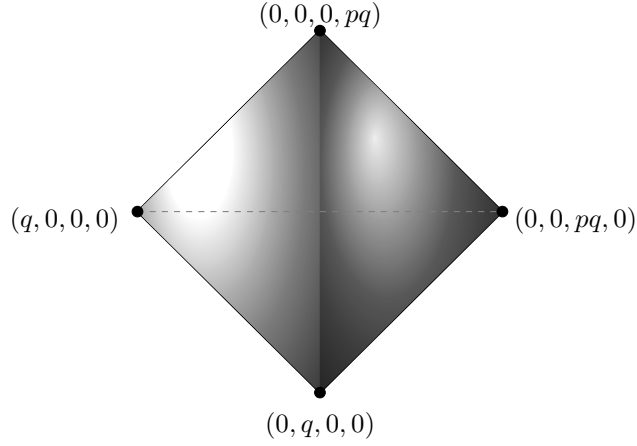
\begin{figure}[!h]

\begin{tikzpicture}[x=0.8cm,y=0.8cm ]
\tikzfading[name=fade1,left color=transparent!80,right color=transparent!80, inner color=transparent!80,outer color=transparent!60]
 \tikzfading[name=fade2,left color=transparent!10,right color=transparent!80, inner color=transparent!50,outer color=transparent!30]

   \draw[thick] (180:3)  -- (270:3) -- (0:3)  -- (90:3) -- (180:3);
   \draw[very thick] (90:3) --  (270:3);

\shade[ball color=white, path fading=fade1] (180:3) -- (90:3) -- (270:3);
\shade[ball color=gray, path fading=fade2] (90:3) -- (270:3)-- (0:3);
\foreach \i in {0,0.2,...,5.9}
\draw[gray] (-3+\i,0)--(-3+\i+0.1,0);

\draw (0:4.2) node[inner sep=-1pt,below=-1pt,rectangle,fill=white] {{\small $(0,0,pq,0)$}};
\draw (180:4.2) node[inner sep=-1pt,below=-1pt,rectangle,fill=white] {{\small $(q,0,0,0)$}};
\draw (90:3.4) node[inner sep=-1pt,below=-1pt,rectangle,fill=white] {{\small $(0,0,0,pq)$}};
\draw (270:3.4) node[inner sep=-1pt,below=-1pt,rectangle,fill=white] {{\small $(0,q, 0,0)$}};

    \draw[fill=black]  (0:3) circle(0.7mm) (180:3) circle(0.7mm)  (90:3) circle(0.7mm)  (270:3) circle(0.7mm);

  \end{tikzpicture}

\caption{La fronti\`ere de Newton $\Gamma(\f)$, o\`u $\f=\h_q(x_1,x_2)+\h_{2q}(x_3,x_4)$}
\label{fi:poly_newt_pri_fami_A4}
\end{figure}

Soit $H$ un hyperplan qui ne contient pas 
l'origine de $\RR^4$ et tel que $H\cap \RR^4_{\geq 0}$ soit un ensemble compact.
La Figure \ref{fi:EN_newt_pri_fami_A4}  repr\'esente l'intersection de $H$ avec la 
subdivision $\EN{\f}$ de $\RR^4_{\geq 0}$. Chaque sommet du diagramme est identifi\'e avec 
le vecteur primitif d'un c\^one de dimension $1$ de l'\'eventail $\EN{\f}$
({\it vecteur extr\'emal})  correspondant.

\begin{figure}[!h]

\begin{tikzpicture}[x=0.8cm,y=0.8cm ]

   \draw[very thick] (180:3)  -- (270:3) -- (0:3)  -- (90:3) -- (180:3);
   \draw[very thick] (90:3) --  (270:3);

\foreach \i in {0,0.2,...,5.9}
\draw[gray] (-3+\i,0)--(-3+\i+0.1,0);

\foreach \i in {0,0.2,...,2.28}
\draw[thick] (-3+\i,0.31*\i)--(-3+\i+0.1,0.31*\i+0.032);

\foreach \i in {0,0.05,...,0.71}
\draw[thick] (0-\i,-3+5.22*\i)--(-\i-0.025,-3+5.22*\i+5.22*0.025);

\foreach \i in {0,0.05,...,0.70}
\draw[thick] (0-\i, 3-3.22*\i)--(-\i-0.025,3-3.22*\i-3.22*0.025);

\foreach \i in {0,0.2,...,3.70}
\draw[thick] (3-\i, 0.19*\i)--(3-\i-0.1,0.19*\i +0.19*0.1);

\draw (0:4.7) node[inner sep=-1pt,below=-1pt,rectangle,fill=white] {{\small $\e_3:=(0,0,1,0)$}};
\draw (180:4.7) node[inner sep=-1pt,below=-1pt,rectangle,fill=white] {{\small $\e_1:=(1,0,0,0)$}};
\draw (90:3.4) node[inner sep=-1pt,below=-1pt,rectangle,fill=white] {{\small $\e_4:=(0,0,0,1)$}};
\draw (270:3.4) node[inner sep=-1pt,below=-1pt,rectangle,fill=white] {{\small $\e_2:=(0,1, 0,0)$}};
\draw (-1,1) node[inner sep=-1pt,below=-1pt,rectangle,fill=white] {{\small $\rho_0$}};

\draw (4.5,1.5) node[inner sep=-1pt,below=-1pt,rectangle,fill=white] {{\small $\rho_0:=(p,p,1,1)$}};

    \draw[fill=black]  (0:3) circle(0.7mm) (180:3) circle(0.7mm)  (90:3) circle(0.7mm)  (270:3) circle(0.7mm) (-0.71,0.71) circle(0.7mm);

  \end{tikzpicture}
\caption{Section de l'\'eventail de Newton $\EN{\f}$, o\`u $\f=\h_q(x_1,x_2)+\hk_{pq}(x_3,x_4)$}
\label{fi:EN_newt_pri_fami_A4}
\end{figure}

On note $\sigma_j$, $1\leq j\leq 4$, le c\^one de dimension $4$ de $\EN{\f}$ engendr\'e par le vecteur $(p,p,1,1)$ et l'ensemble 
$\{\e_i\mid 1\leq i\leq 4,\;i\neq j\}$.\\

La proposition suivante r\'esulte d'un simple calcul.

\begin{proposition}
 Les c\^ones $\sigma_3$ et $\sigma_4$ sont r\'eguliers.
\end{proposition}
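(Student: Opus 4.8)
The statement asserts that the two four-dimensional cones $\sigma_3$ and $\sigma_4$ of the Newton fan $\EN{\f}$ are regular, i.e.\ that their extremal vectors form part of a $\ZZ$-basis of the lattice $\N=\ZZ^4$. By the symmetry $x_1\leftrightarrow x_2$ of the Newton data (the polynomial $\h_q(x_1,x_2)$ has no multiple factor and is not divisible by $x_1$ or $x_2$), it suffices to treat one of them, say $\sigma_3$; the case $\sigma_4$ follows by interchanging the roles of $\e_1$ and $\e_2$. By definition $\sigma_3$ is generated by the vector $\rho_0=(p,p,1,1)$ together with $\{\e_i\mid i\neq 3\}=\{\e_1,\e_2,\e_4\}$, so the four extremal vectors are
\begin{center}
$\e_1=(1,0,0,0),\quad \e_2=(0,1,0,0),\quad \e_4=(0,0,0,1),\quad \rho_0=(p,p,1,1).$
\end{center}

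The verification is then immediate: I would simply compute the determinant of the $4\times 4$ matrix whose rows are these four vectors. Expanding along the third column (which has a single nonzero entry, the $1$ coming from $\rho_0$), one gets
\begin{center}
$\det\begin{pmatrix}1&0&0&0\\0&1&0&0\\0&0&0&1\\p&p&1&1\end{pmatrix}=\pm 1.$
\end{center}
Since the determinant is a unit in $\ZZ$, the four extremal vectors form a $\ZZ$-basis of $\N$, hence they certainly form part of a basis, and the cone $\sigma_3$ they generate is by definition a regular cone. The same computation with $\e_1,\e_2$ swapped gives $\det=\pm1$ for $\sigma_4$, so $\sigma_4$ is regular as well.

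There is essentially no obstacle here: this is the ``simple calcul'' announced before the statement, and the only thing one must be slightly careful about is that $\rho_0$ really is the primitive generator of its ray (its coordinates $p,p,1,1$ have gcd $1$) and that $\e_1,\e_2,\e_4$ are the correct extremal vectors of $\sigma_3$, which is exactly the defining data recorded just above the proposition. Once the determinant is seen to be $\pm1$, regularity is immediate from the definition of a regular cone given in Section~\ref{ssec:torique}. One should note that the analogous statement fails for $\sigma_1$ and $\sigma_2$ (whose generators involve $\e_3$ and $\rho_0$, giving a determinant equal to $\pm p$), which is precisely why only $\sigma_3$ and $\sigma_4$ are claimed to be regular and why the cones $\sigma_1,\sigma_2$ will have to be subdivided further in the construction of the admissible regular subdivision of $\EN{\f}$.
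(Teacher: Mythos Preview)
Your approach is correct and is exactly the ``simple calcul'' the paper alludes to: computing the determinant of the matrix of extremal vectors and checking it equals $\pm 1$. One small slip worth fixing: the symmetry $x_1\leftrightarrow x_2$ exchanges $\sigma_1$ and $\sigma_2$, not $\sigma_3$ and $\sigma_4$. The cone $\sigma_4$ is generated by $\e_1,\e_2,\e_3,\rho_0$ (i.e.\ replace $\e_4$ by $\e_3$, not swap $\e_1$ and $\e_2$); the relevant symmetry is $x_3\leftrightarrow x_4$, under which $\rho_0=(p,p,1,1)$ is fixed and $\sigma_3\leftrightarrow\sigma_4$. The determinant computation for $\sigma_4$ is then the same expansion along the fourth column instead of the third, again giving $\pm 1$.
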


De gauche \`a droite: la Figure \ref{fi:sub_sigma12_pri_fami_A4} repr\'esente l'intersection du plan $H$ avec une  subdivision du c\^one $\sigma_2$ et du c\^one $\sigma_1$. On rappelle que chaque sommet des diagrammes est identifi\'e avec le  vecteur extr\'emal correspondant.

\begin{figure}[!h]

\begin{tikzpicture}[x=0.9cm,y=0.9cm ]

   \draw[very thick] (180:3)  -- (270:3) -- (0:3)  -- (90:3) -- (180:3);
   \draw[very thick] (90:3) --  (270:3);

\foreach \i in {0,0.2,...,5.9}
\draw[gray] (3-\i,0)--(3-\i-0.1,0);

\draw[thick] (0,-3)--(2/5,-8/5);
\draw[thick] (3/5,-4.5/5)-- (1,0.5);

\draw[thick] (3,0)--(2/3,-2/3);

\draw[thick] (3,0)--(1/3,-5.5/3);

\draw[thick] (0,3)--(2/3,-2/3);

\draw[thick] (0,3)--(1/3,-5.5/3);

\foreach \i in {2,4,...,40}
\draw[thick] (0.05*\i/3 +0.075*\i -3, -0.05*\i/3)--(0.05*\i/3 +0.075*\i -3 +0.55/6, -0.05*\i/3-0.1/6);

\foreach \i in {2,4,...,40}
\draw[thick] (0.05*\i/6 +0.075*\i -3, -0.05*5.5*\i/6)--(0.05*\i/6 +0.075*\i -3 +0.55/6, -0.05*5.5*\i/6-0.55/12);

\foreach \i in {2,4,...,8}
\draw[thick] (2*0.02*\i +3*0.02*10-3*0.02*\i ,-8*0.02*\i-4.5*0.02*10+4.5*0.02*\i)--  (2*0.02*\i +2*0.02+3*0.02*10-3*0.02*\i -3*0.02 ,-8*0.02*\i -8*0.02 -4.5*0.02*10+4.5*0.02*\i +4.5*0.02);

\draw[thick] (3,0)--(1,0.5);

\draw[thick] (0,3)--(1,0.5);

\foreach \i in {0,0.2,...,3.70}
\draw[thick] (-3+\i, 0.125*\i)--(-3+\i+0.1,0.125*\i -0.125*0.1);

\draw (0:3.4) node[inner sep=-1pt,below=-1pt,rectangle,fill=white] {{\small $u_1$}};
\draw (-3.5,-0.5) node[inner sep=-1pt,below=-1pt,rectangle,fill=white] {{\small $(1,0,0,0)$}};
\draw (90:3.4) node[inner sep=-1pt,below=-1pt,rectangle,fill=white] {{\small $u_2$}};
\draw (270:3.4) node[inner sep=-1pt,below=-1pt,rectangle,fill=white] {{\small $\rho_0$}};
\draw (1.4,0.8) node[inner sep=-1pt,below=-1pt,rectangle,fill=white] {{\small $\rho_{p-1}$}};
\draw (1.2,-2.3/3) node[inner sep=-1pt,below=-1pt,rectangle,fill=white] {{\small $\rho_{p-2}$}};
\draw (1.8/3,-5.9/3) node[inner sep=-1pt,below=-1pt,rectangle,fill=white] {{\small $\rho_1$}};

\draw (-2,2) node[inner sep=-1pt,below=-1pt,rectangle,fill=white] { $\sigma_2$};
    \draw[fill=black]  (2/3,-2/3) circle(0.7mm) (1/3,-5.5/3) circle(0.7mm) (0:3) circle(0.7mm) (180:3) circle(0.7mm)  (90:3) circle(0.7mm)  (270:3) circle(0.7mm) (1,0.5) circle(0.7mm);

  \end{tikzpicture}
\begin{tikzpicture}[x=0.9cm,y=0.9cm ]

   \draw[very thick] (180:3)  -- (270:3) -- (0:3)  -- (90:3) -- (180:3);
   \draw[very thick] (90:3) --  (270:3);

\foreach \i in {0,0.2,...,5.9}
\draw[gray] (-3+\i,0)--(-3+\i+0.1,0);

\draw[thick] (-3,0)--(-1,0.5);

\draw[thick] (0,3)--(-1,0.5);

\draw[thick] (0,-3)--(-2/5,-8/5);
\draw[thick] (-3/5,-4.5/5)-- (-1,0.5);

\draw[thick] (-3,0)--(-2/3,-2/3);

\draw[thick] (-3,0)--(-1/3,-5.5/3);

\draw[thick] (0,3)--(-2/3,-2/3);

\draw[thick] (0,3)--(-1/3,-5.5/3);

\foreach \i in {2,4,...,40}
\draw[thick] (-0.05*\i/3 -0.075*\i +3, -0.05*\i/3)--(-0.05*\i/3 -0.075*\i +3 -0.55/6, -0.05*\i/3-0.1/6);

\foreach \i in {2,4,...,40}
\draw[thick] (-0.05*\i/6 -0.075*\i +3, -0.05*5.5*\i/6)--(-0.05*\i/6 -0.075*\i +3 -0.55/6, -0.05*5.5*\i/6-0.55/12);

\foreach \i in {2,4,...,8}
\draw[thick] (-2*0.02*\i -3*0.02*10+3*0.02*\i ,-8*0.02*\i-4.5*0.02*10+4.5*0.02*\i)--  (-2*0.02*\i -2*0.02-3*0.02*10+3*0.02*\i +3*0.02 ,-8*0.02*\i -8*0.02 -4.5*0.02*10+4.5*0.02*\i +4.5*0.02);

\foreach \i in {0,0.2,...,3.70}
\draw[thick] (3-\i, 0.125*\i)--(3-\i-0.1,0.125*\i +0.125*0.1);

\draw (3.5,-0.5) node[inner sep=-1pt,below=-1pt,rectangle,fill=white] {{\small $(0,1,0,0)$}};
\draw (180:3.4) node[inner sep=-1pt,below=-1pt,rectangle,fill=white] {{\small $u_1$}};
\draw (90:3.4) node[inner sep=-1pt,below=-1pt,rectangle,fill=white] {{\small $u_2$}};
\draw (270:3.4) node[inner sep=-1pt,below=-1pt,rectangle,fill=white] {{\small $\rho_0$}};
\draw (-1.3,0.8) node[inner sep=-1pt,below=-1pt,rectangle,fill=white] {{\small $\rho_{p-1}$}};
\draw (-1.1,-2.3/3) node[inner sep=-1pt,below=-1pt,rectangle,fill=white] {{\small $\rho_{p-2}$}};
\draw (-1.6/3,-5.9/3) node[inner sep=-1pt,below=-1pt,rectangle,fill=white] {{\small $\rho_1$}};
\draw (-2,2) node[inner sep=-1pt,below=-1pt,rectangle,fill=white] {$\sigma_1$};
\draw (3.5,2) node[inner sep=-1pt,below=-1pt,rectangle,fill=white] {{\small $u_1:=(0,0,1,0)$}};

\draw (3.5,1.5) node[inner sep=-1pt,below=-1pt,rectangle,fill=white] {{\small $u_2:=(0,0,0,1)$}};

\draw (4,2.5) node[inner sep=-1pt,below=-1pt,rectangle,fill=white]{{\small $0\leq j\leq p-1$}};
\draw (3.7,3) node[inner sep=-1pt,below=-1pt,rectangle,fill=white] {{\small $\rho_j:=(p-j,p-j, 1,1)$}};

    \draw[fill=black]  (-2/3,-2/3) circle(0.7mm) (-1/3,-5.5/3) circle(0.7mm) (0:3) circle(0.7mm) (180:3) circle(0.7mm)  (90:3) circle(0.7mm)  (270:3) circle(0.7mm) (-1,0.5) circle(0.7mm);

  \end{tikzpicture}
\caption{Subdivision des c\^ones $\sigma_1$ et $\sigma_2$}
\label{fi:sub_sigma12_pri_fami_A4}
\end{figure}
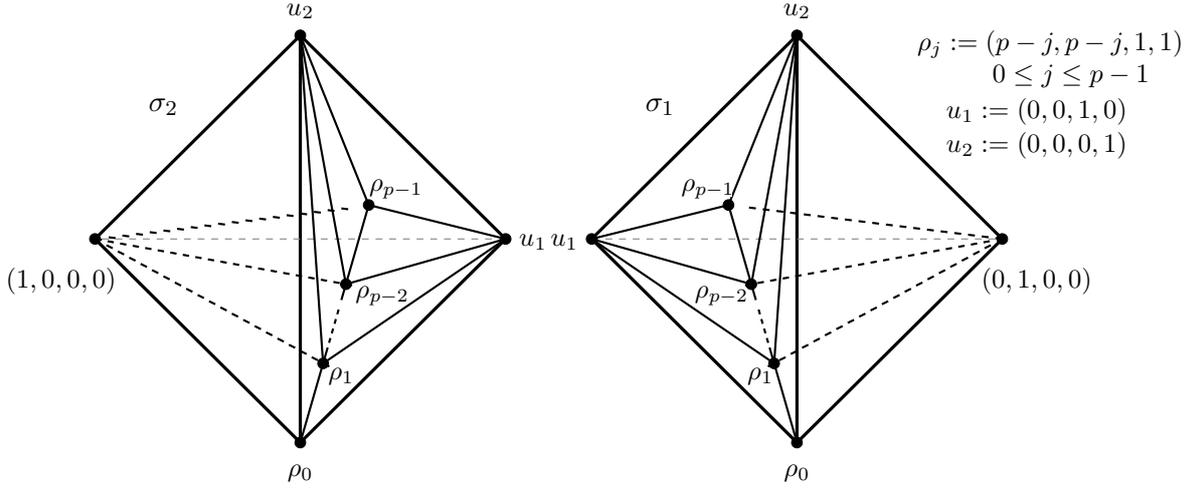

Pour chaque paire d'entiers $1\leq j<p-1$ et $1\leq k\leq 2$,
on note $\sigma_{1jk}$ (resp. $\sigma_{2jk}$) le c\^one de dimension $4$ engendr\'e par 
le vecteur  $(0,1,0,0)$ (resp. $(1,0,0,0)$) et l'ensemble $\{\rho_{j-1}, \rho_j,u_k\}$,   o\`u   
$u_1:=(0,0,1,0)$,  $u_2:=(0,0,0,1)$ et $\rho_j=(p-j,p-j,1,1)$.

On note $\sigma_{1\;p-1}$ (resp. $\sigma_{2\;p-1}$) le c\^one engendr\'e par 
le vecteur $(0,1,0,0)$ (resp. $(1,0,0,0)$) et l'ensemble $\{u_1,u_2,\rho_{p-1}\}$.
On remarque que l'ensemble  $\{ \sigma_{1 j k},\sigma_{2jk}\mid 1\leq j< p-1,1\leq k\leq 2\}\cup \{\sigma_{1\;p-1},\sigma_{2\;p-1},\sigma_3,\sigma_4\}$ engendre un \'eventail, not\'e $\Sigma$. \\

Par une simple inspection des c\^ones de l'\'eventail $\Sigma$, on obtient la proposition suivante:

\begin{proposition}
 \label{pr:GDEN-prim-ex-A4} L'\'eventail $\Sigma$ engendr\'e par l'ensemble  $\{ \sigma_{1 j k},\sigma_{2jk}\mid 1\leq j< p-1,1\leq k\leq 2\}\cup \{\sigma_{1\;p-1},\sigma_{2\;p-1},\sigma_3,\sigma_4\}$ est une $G$-subdivision r\'eguli\`ere de l'\'eventail $\EN{\f}$.
\end{proposition}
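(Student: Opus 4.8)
The plan is to verify the two conditions in the definition of a $G$-regular subdivision separately over each maximal cone of $\EN{\f}$, namely that $\Sigma$ is a regular subdivision and that, inside each maximal cone $\sigma$ of $\EN{\f}$, the set of extremal vectors occurring in $\Sigma$ coincides with the set of irreducible elements of the semigroup $\sigma\cap\N$. Recall from Figure~\ref{fi:EN_newt_pri_fami_A4} that $\EN{\f}$ is the star subdivision of the standard cone $\Delta$ at the primitive ray $\rho_0=(p,p,1,1)$, so its maximal cones are the simplicial cones $\sigma_j=\langle\rho_0,\e_i\ (i\ne j)\rangle$, $1\le j\le 4$. Since the automorphism $x_1\leftrightarrow x_2$ of $\AF^4_\KK$ exchanges $\sigma_1$ and $\sigma_2$ while fixing $\rho_0,u_1,u_2$, it is enough to deal with $\sigma_1,\sigma_3,\sigma_4$. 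For $\sigma_3$ and $\sigma_4$, which $\Sigma$ leaves undivided, the preceding proposition gives regularity, hence $\sigma_j\cap\N$ ($j=3,4$) is the free monoid on the extremal vectors $\rho_0$ and the $\e_i$ ($i\ne j$); its irreducible elements are therefore exactly those extremal vectors, so the trivial subdivision of $\sigma_j$ is already $G$-regular.

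It remains to analyse $\sigma_1=\langle\rho_0,\e_2,u_1,u_2\rangle$ with $u_1=\e_3$, $u_2=\e_4$. First I would check that the cones $\sigma_{1jk}$ together with $\sigma_{1\;p-1}$ genuinely subdivide $\sigma_1$: writing $\sigma_1=\langle\e_2\rangle+\langle\rho_0,u_1,u_2\rangle$ as the cone with apex $\e_2$ over the facet $\langle\rho_0,u_1,u_2\rangle$, the identity $\rho_j=(p-j,p-j,1,1)=\tfrac{p-j}{p-j+1}\,\rho_{j-1}+\tfrac{1}{p-j+1}\,(u_1+u_2)$, valid for $1\le j\le p-1$, shows that $\rho_j$ lies in the relative interior of $\langle\rho_{j-1},u_1,u_2\rangle$; hence the subdivision of $\sigma_1$ depicted in Figure~\ref{fi:sub_sigma12_pri_fami_A4} is obtained by performing successive star subdivisions at $\rho_1,\dots,\rho_{p-1}$, which automatically produces a fan supported on $\sigma_1$, so that $\Sigma$ is indeed a fan refining $\EN{\f}$. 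Regularity of each cone of $\Sigma$ is then a short computation: for $\langle\e_2,\rho_{j-1},\rho_j,u_k\rangle$ one subtracts the $\rho_j$-row from the $\rho_{j-1}$-row, which replaces $\rho_{j-1}$ by $\e_1+\e_2$ without changing the determinant, and a one-line cofactor expansion evaluates it to $\pm1$; the cone $\sigma_{1\;p-1}=\langle\e_2,u_1,u_2,(1,1,1,1)\rangle$ has determinant $\pm1$ directly, and the cones $\sigma_{2jk},\sigma_{2\;p-1}$ are identical after the coordinate swap. Thus $\Sigma$ is a regular subdivision of $\EN{\f}$.

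The heart of the proof is the identification of the irreducible elements of $\sigma_1\cap\N$. The extremal vectors occurring in $\Sigma$ inside $\sigma_1$ are precisely $\e_2,u_1,u_2,\rho_0,\dots,\rho_{p-1}$, all primitive and pairwise distinct. The implication from irreducibility to being such an extremal vector is formal: since $\Sigma$ restricted to $\sigma_1$ is a regular fan, every lattice point of $\sigma_1\cap\N$ is a non-negative integral combination of the rays of the smooth cone of $\Sigma$ containing it, so an indecomposable nonzero element must equal one of those rays. For the converse, $\e_2,u_1,u_2$ are standard basis vectors and are obviously irreducible; and for $\rho_j$, suppose $\rho_j=v+w$ with $v,w\in(\sigma_1\cap\N)\setminus\{0\}$. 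Since $(\rho_j)_3=1$ and $v,w\in\RR^4_{\ge0}$, after possibly swapping $v$ and $w$ we have $v_3=1$ and $w_3=0$, so $w$ belongs to the face $\sigma_1\cap\{x_3=0\}=\langle\e_2,u_2\rangle$ and hence $w=(0,w_2,0,w_4)$ with $w_4\in\{0,1\}$. If $w_4=1$ then $v_4=0$, so $v\in\sigma_1\cap\{x_4=0\}=\langle\e_2,u_1\rangle$, forcing $v_1=0$ and therefore $p-j=(\rho_j)_1=v_1+w_1=0$, impossible since $j\le p-1$; if $w_4=0$ then $w\in\RR_{\ge0}\e_2$, and writing $v=\rho_j-w$ as a combination of the generators $\rho_0,\e_2,u_1,u_2$ of $\sigma_1$ forces $w=0$, again a contradiction. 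So each $\rho_j$ is irreducible, and the same argument applies to $\sigma_2$. Combining all of this, each maximal cone of $\EN{\f}$ is $G$-regularly subdivided by $\Sigma$, which is exactly the claim. The one genuinely delicate step is this last coordinate analysis: it is what shows both that the $\rho_j$ are indecomposable and, in combination with the smoothness of $\Sigma|_{\sigma_1}$, that no further indecomposable element of $\sigma_1\cap\N$ lurks between consecutive $\rho_j$; the determinant checks and the fan-theoretic bookkeeping are routine.
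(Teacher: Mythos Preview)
Your proposal is correct and is exactly the ``simple inspection des c\^ones'' to which the paper appeals without giving details. You carry out explicitly what the paper leaves implicit: the determinant checks showing each $\sigma_{ijk}$ and $\sigma_{i\,p-1}$ is regular, and the coordinate argument showing that the irreducible elements of $\sigma_1\cap\N$ (resp.\ $\sigma_2\cap\N$) are precisely $\e_2,u_1,u_2,\rho_0,\dots,\rho_{p-1}$ (resp.\ with $\e_1$ in place of $\e_2$).
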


Soient $\pi: X(\Sigma)\rightarrow \AF^4_{\KK}$ le morphisme torique induit 
par l'\'eventail $\Sigma$ de la Proposition \ref{pr:GDEN-prim-ex-A4} et 
$\widetilde{V}$ le transform\'e strict de $V$ dans $X(\Sigma)$.
Par abus de notation, on note $\pi:\widetilde{V}\rightarrow V$ la
restriction de $\pi$ \`a $\widetilde{V}$.\\

En vertu de la  correspondance orbites-c\^ones,  les vecteurs $\rho_j$, $0\leq j\leq p-1$,  sont en correspondance biunivoque avec 
les composantes irr\'eductibles de  la fibre exceptionnelle du morphisme 
$\pi: X(\Sigma)\rightarrow \AF^4_{\KK}$.  Pour plus de d\'etails, voir 
\cite{KKMS73} ou \cite{CLS11}. Soit $\D_j$ le diviseur torique associ\'e au vecteur $\rho_j$.

\begin{proposition}
\label{pr:des_prim_famil_hyp_A4}
 La  fibre exceptionnelle  de la d\'esingularisation $\pi:\widetilde{V}\rightarrow V$ est  un diviseur \`a 
 croisements normaux qui est la r\'eunion de $(p-1)q+1$ composantes  irr\'eductibles. Le diagramme suivant repr\'esente la fibre exceptionnelle de $\pi$:

\begin{center} 
 \begin{tikzpicture}[x=1cm,y=0.7cm ]
 \draw[very thick] (0,-2.85)--(0,-0.85);

\draw (0,0)  node[inner sep=-1pt,below=-1pt,rectangle,fill=white] { {\LARGE $\vdots$}};
\draw (2.4,0)  node[inner sep=-1pt,below=-1pt,rectangle,fill=white] { {\LARGE $\ddots$}};
\draw (4.9,0)  node[inner sep=-1pt,below=-1pt,rectangle,fill=white] { {\LARGE $\ddots$}};
\draw (8.4,0)  node[inner sep=-1pt,below=-1pt,rectangle,fill=white] { {\LARGE $\ddots$}};
 \draw[very thick] (0,0.1)--(0,2.1);
\foreach \i in {1,2,3}
\draw (-1.9+2.5*\i-2.6,-2.8+0.15) node[inner sep=-1pt,below=-1pt,rectangle,fill=white] {{\small $\E_{q,\i}$}}
(-1.9+2.5*\i-2.6+0.1,-1.8+0.15) node[inner sep=-1pt,below=-1pt,rectangle,fill=white] {{\small $\E_{q-1,\i}$}}
(-1.9+2.5*\i-2.6,1) node[inner sep=-1pt,below=-1pt,rectangle,fill=white] {{\small $\E_{2,\i}$}}
(-1.9+2.5*\i-2.6,1.9) node[inner sep=-1pt,below=-1pt,rectangle,fill=white] {{\small $\E_{1,\i}$}};

\foreach \i in {4.5}
\draw (-1.9+2.5*\i-2.5,-2.8+0.15) node[inner sep=-1pt,below=-1pt,rectangle,fill=white] {{\small $\E_{q,p-1}$}}
(-1.9+2.5*\i-2.5+0.1,-1.8+0.15) node[inner sep=-1pt,below=-1pt,rectangle,fill=white] {{\small $\E_{q-1,p-1}$}}
(-1.9+2.5*\i-2.5,1) node[inner sep=-1pt,below=-1pt,rectangle,fill=white] {{\small $\E_{2,p-1}$}}
(-1.9+2.5*\i-2.5,2) node[inner sep=-1pt,below=-1pt,rectangle,fill=white] {{\small $\E_{1,p-1}$}};

\draw (0,2.5) node[inner sep=-1pt,below=-1pt,rectangle,fill=white] {{\small $\E_0$}};

\foreach \i in {0,2,8}
\draw[very thick] (-1.5+\i,-1.6+0.15) arc (110:70:4cm) (-1.5+\i,-2.6+0.15) arc (110:70:4cm) (-1.5+\i,0.6) arc (250:290:4cm)  (-1.5+\i,1.6) arc (250:290:4cm);

\draw[very thick] (-1.5+4,-1.6+0.15) arc (110:90:4cm) (-1.5+4,-2.6+0.15) arc (110:90:4cm) (-1.5+4,0.6) arc (250:270:4cm)  (-1.5+4,1.6) arc (250:270:4cm);

\draw[very thick](-1.5+9,-1.6+0.15) arc (70:90:4cm) (-1.5+9,-2.6+0.15) arc (70:90:4cm) (-1.5+9,0.6) arc (290:270:4cm)  (-1.5+9,1.6) arc (290:270:4cm);

\draw (-1.5+6.5,-1.6+0.15) node[inner sep=-1pt,below=-1pt,rectangle,fill=white] { {\LARGE $\cdots$}};

\draw (-1.5+6.5,-2.6+0.15) node[inner sep=-1pt,below=-1pt,rectangle,fill=white] { {\LARGE $\cdots$}};

\draw  (-1.5+6.5,0.6) node[inner sep=-1pt,below=-1pt,rectangle,fill=white] { {\LARGE $\cdots$}};

\draw (-1.5+6.5,1.6)  node[inner sep=-1pt,below=-1pt,rectangle,fill=white] { {\LARGE $\cdots$}};

\end{tikzpicture}
\end{center}
o\`u $\E_0:=\D_0\cap \widetilde{V}$ 
et les  diviseurs $\E_{i,j}$, $1\leq i\leq q$, sont les composantes  irr\'eductibles 
de $\D_j\cap \widetilde{V}$, $1\leq j\leq p-1$. 
Les intersections $\E_{0}\cap \E_{i,1}$ et $\E_{i,j}\cap\E_{i,j+1}$, $1\leq i\leq q $, $1\leq j\leq p-2$, sont des courbes lisses et irr\'eductibles.  De plus, les diviseurs $\E_{i,j}$, $1\leq i\leq q$, $1\leq j\leq p-1$ sont des vari\'et\'es  rationnelles.
\end{proposition}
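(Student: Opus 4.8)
The plan is to read the exceptional fibre directly off the explicit fan $\Sigma$ of Proposition~\ref{pr:GDEN-prim-ex-A4}, working in the affine toric charts of $X(\Sigma)$. First, several assertions come for free: by Proposition~\ref{pr:premier-pr-sing-iso} the hypersurface $V$ satisfies the hypotheses of Proposition~\ref{pr:cr_nor-f}, so $\pi:\widetilde V\to V$ is a desingularization and $\pi:X(\Sigma)\to\AF^4_{\KK}$ an embedded resolution; hence $\pi^{-1}(0)\cap\widetilde V$ is automatically a normal crossings divisor in the smooth variety $\widetilde V$. The rays of $\Sigma$ are $\e_1,\dots,\e_4$, on which $\appui{\f}$ vanishes, together with $\rho_0,\dots,\rho_{p-1}$, on which $\appui{\f}(\rho_j)=\min\{(p-j)q,\,pq\}>0$; so the prime divisors of $X(\Sigma)$ contracted to $0$ are exactly $\D_0,\dots,\D_{p-1}$, and $\pi^{-1}(0)\cap\widetilde V=\bigcup_{j=0}^{p-1}(\D_j\cap\widetilde V)$.

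Next I would identify, by a chart computation, each $\D_j\cap\widetilde V$ with the reduced zero locus inside $\D_j$ of the $\rho_j$-initial part of $\f$; this is legitimate because $\f$ is non-degenerate with respect to $\Gamma(\f)$, and is the same local analysis behind Proposition~\ref{pr:cr_nor-f}. For $\rho_0=(p,p,1,1)$ all four vertices $(q,0,0,0),(0,q,0,0),(0,0,pq,0),(0,0,0,pq)$ of $\Gamma(\f)$ give the value $pq$, so this initial part is $\f$ itself; in the chart attached to $\sigma_3$ one gets $\widetilde V\cap\D_0=\{\h_q(z_2,z_3)+\hk_{pq}(1,z_4)=0\}$, and globally $\D_0$ is a toric modification of $\mathbb{P}(p,p,1,1)$ with $\widetilde V\cap\D_0$ birational to $\{\f=0\}\subset\mathbb{P}(p,p,1,1)$. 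Since $V$ is normal, $\f$ is irreducible, so $\KK[x_1,\dots,x_4]/(\f)$ is a domain for the grading $\deg x_1=\deg x_2=p$, $\deg x_3=\deg x_4=1$; hence $\{\f=0\}\subset\mathbb{P}(p,p,1,1)$, and therefore $\E_0:=\D_0\cap\widetilde V$, is irreducible. For $1\le j\le p-1$ the minimum of $\langle\rho_j,\cdot\rangle$ over $\mathcal{E}(\f)$ is $(p-j)q<pq$, attained exactly on the edge of $\Gamma(\f)$ carrying $\h_q$, so the $\rho_j$-initial part is $\h_q(x_1,x_2)$: in a chart attached to a cone $\sigma_{1jk}$ or $\sigma_{2jk}$ containing $\rho_j$ one has $\D_j\cong\AF^3$ with $w_1:=x_1/x_2$ a coordinate and $\widetilde V\cap\D_j=\{\h_q(1,w_1)=0\}$. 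As $\h_q$ is squarefree and divisible by neither $x_1$ nor $x_2$, $\h_q(1,w_1)=c\prod_{i=1}^{q}(w_1-c_i)$ with $c\ne0$ and the $c_i$ pairwise distinct and nonzero; thus $\D_j\cap\widetilde V=\bigcup_{i=1}^{q}\E_{i,j}$, $\E_{i,j}:=\overline{\{w_1=c_i\}\cap\D_j}$, each containing an open subset $\cong\AF^2$, hence irreducible and rational. Counting gives $(p-1)q+1$ components.

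For the incidences I would use that $\rho_0,\rho_1,\dots,\rho_{p-1}$ lie, in this order, on a single segment in the subdivision of $\sigma_1$ and $\sigma_2$, so $\langle\rho_i,\rho_{i'}\rangle\in\Sigma$ only for $|i-i'|\le1$; hence $\D_i\cap\D_{i'}=\emptyset$ unless $|i-i'|\le1$, and since $\E_{i,j}\subset\{x_1/x_2=c_i\}$ the only non-empty intersections among the components are $\E_0\cap\E_{i,1}$ and $\E_{i,j}\cap\E_{i,j+1}$. In a chart attached to $\sigma_{11k}$ one finds $\E_0\cap\E_{i,1}=\{w_2=w_3=0,\ w_1=c_i\}\cong\AF^1$; globally it is the closure of the fibre over the torus point $[c_i:1]\in\mathbb{P}^1$ of the toric map $\D_0\cap\D_1\dashrightarrow\mathbb{P}^1$ given by $x_1/x_2$, and $\D_0\cap\D_1$ being a complete toric surface this fibre is a $\mathbb{P}^1$, so the intersection is a smooth irreducible curve; the same computation in a chart containing $\rho_j$ and $\rho_{j+1}$ treats $\E_{i,j}\cap\E_{i,j+1}$. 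Together with the normal crossings property this yields the displayed diagram.

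The step I expect to be the main obstacle is the irreducibility of the central component $\E_0$: it is cut out by the \emph{entire} polynomial $\f$, which on the weighted projective space could a priori factor, and ruling this out is exactly where the normality of $V$ enters. A secondary technical point is checking that in \emph{every} chart $\D_j\cap\widetilde V$ is the reduced zero locus of the $\rho_j$-initial part with no spurious components along smaller torus orbits, which is guaranteed by the Newton non-degeneracy of $\f$ and is again contained in the analysis behind Proposition~\ref{pr:cr_nor-f}. Everything else reduces to the routine chart computations, which are straightforward once the fan of Proposition~\ref{pr:GDEN-prim-ex-A4} is in hand.
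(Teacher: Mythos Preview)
Your proposal is correct and follows essentially the same approach as the paper: both compute the exceptional fibre chart by chart in the toric resolution, reading off the equation of $\widetilde V$ in each affine piece and identifying $\D_j\cap\widetilde V$ with the locus cut out by the $\rho_j$-initial part of $\f$ (the paper does this explicitly in the chart attached to $\sigma_{2j2}$, obtaining $\h_q(y_1,1)+y_2^{q(j-1)}y_3^{qj}\hk_{pq}(1,y_4)=0$ and reading off exactly the components and incidences you describe). Your global argument for the irreducibility of $\E_0$ via the irreducibility of $\f$ in the graded ring of $\mathbb{P}(p,p,1,1)$ is a nice addition, since the paper simply asserts irreducibility from the single chart; otherwise the two arguments are the same computation in slightly different packaging.
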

\begin{remarque}
 Dans les r\'esultats suivants, on montre que les diviseurs essentiels sur $V$ sont exactement  les diviseurs exceptionnels de la d\'esingularisation $\pi:\widetilde{V}\rightarrow V$ 
\end{remarque}

\begin{proof}[{\it D\'emonstration de la Proposition }\ref{pr:des_prim_famil_hyp_A4}] 

 Cette proposition peut \^etre obtenue en utilisant les m\'ethodes d\'evelopp\'ees dans l'article 
 \cite{Oka87}. Or on va donner une preuve adapt\'ee \`a notre cas.\\

On rappelle que le morphisme $\pi:X(\Sigma)\rightarrow \AF^{4}_{\KK}$ est une r\'esolution plong\'ee 
de $V$, d'o\`u la fibre exceptionnelle de la d\'esingularisation  $\pi:\widetilde{V} \rightarrow V$ est 
un diviseur \`a croisements normaux. 
Par cons\'equent, les intersections $\E_{0}\cap \E_{i,1}$ et $\E_{i,j}\cap\E_{i,j+1}$, $1\leq i\leq p $, $1\leq j\leq p-2$ sont une r\'eunion de courbes lisses.\\

 On fixe un entier $1\leq j<p-1$ et on consid\`ere le c\^one $\sigma_{2 j 2}\in \Sigma$. On rappelle que   $\sigma_{2 j 2}$ est le c\^one r\'egulier  engendr\'e par les vecteurs $(1,0,0,0), (0,0,0,1)$ ,  $\rho_j=(p-j,p-j,1,1)$, $\rho_{j-1}=(p-j+1,p-j+1,1,1)$. On note $U$ ($\cong \AF^4_{\KK}$) l'ouvert torique qui est en correspondance avec le c\^one $\sigma_{2 j2}$.\\

La restriction \`a $U$ du morphisme torique  $\pi:X(\Sigma)\rightarrow \AF^{4}_{\KK}$ est d\'efinie de la fa\c con suivante:
\begin{center}
 $\pi\mid_U:U\rightarrow \AF^{4}_{\KK}$, $(y_1,y_2,y_3,y_4)\mapsto (x_1,x_2,x_3,x_4):=(y_1y_2^{p-j+1}y_3^{p-j},y_2^{p-j+1}y_3^{p-j},y_2y_3,y_2y_3y_4)$.
\end{center}

On remarque que $U\cap\pi^{-1}(0)=\{y_2=0\}\cup\{y_3=0\}$. La vari\'et\'e $\widetilde{V}\cap U$ est donn\'ee par l'\'equation suivante:
\begin{center}
$\h_{q}(y_1,1)+y_2^{q(j-1)}y_3^{qj}\hk_{pq}(1,y_4)=0$.
\end{center}

Si  $j=1$, l'intersection $\widetilde{V}\cap U\cap\{y_2=0\}$ est une vari\'et\'e  irr\'eductible et  $\widetilde{V}\cap U\cap \{y_3=0\}$  est donn\'ee par les \'equations $\h_{q}(y_1,1)=0$, $y_3=0$. Par cons\'equent, $\widetilde{V}\cap U\cap\{y_3=0\}$ est  la r\'eunion disjointe  de  $q$ composantes irr\'eductibles (chaque composante irr\'eductible  est associ\'ee \`a une racine du  polyn\^ome $\h_q(y,1)$) et  chaque composante  irr\'eductible  est une  vari\'et\'e   rationnelle. Soient $w\in \KK$ une racine du polyn\^ome $\h_q(y,1)$ et $F_w$ la  composante irr\'eductible  de   $\widetilde{V}\cap U\cap \{y_3=0\}$ associ\'ee \`a $w$. On remarque que $F_w\cap \{y_2=0\}$ est une courbe irr\'eductible.\\

Si $j\geq 2$, les intersections  $\widetilde{V}\cap U\cap \{y_2=0\}$, $\widetilde{V}\cap U\cap \{y_3=0\}$ sont  la r\'eunion disjointe  de  $q$ composantes irr\'eductibles  et  chaque composante  est une  vari\'et\'e  rationnelle. Soit  $F_{2,w}$ (resp. $\F_{3,w}$ la  composante irr\'eductible  de   $\widetilde{V}\cap U\cap\{y_2=0\}$ (resp.  $\widetilde{V}\cap U\cap \{y_3=0\}$)  associ\'ee \`a $w$, o\`u $w\in \KK$ une racine du polyn\^ome $\h_q(y,1)$.  Alors l'intersection $F_{2,w}\cap F_{3,w'}$ n'est pas vide si et seulement si $w=w'$. On remarque que $F_{2,w}\cap F_{3,w}$ est une courbe   irr\'eductible.\\

En proc\'edant de la m\^eme mani\`ere sur tous les c\^ones qui contiennent les
vecteurs extr\'emaux $\rho_{j-1}$ et $\rho_j$, on obtient la proposition. 
\end{proof}

\subsubsection{\bf{Preuve de la bijectivit\'e de l'application de Nash}}
Dans cette section, on d\'emontre le Th\'eor\`eme \ref{th:premier-the-hyp-A4} sur la bijectivit\'e de l'application de Nash pour 
l'hypersurface $V$ de $\AF_{\KK}^4$ donn\'ee par une \'equation du type  $\f(x_1,x_2,x_3,x_4):=\h_q(x_1,x_2)+\hk_{pq}(x_3,x_4)$,
ce qui \'equivaut \`a montrer que tous les wedges admissibles se rel\`event \`a une d\'esingularisation de $V$ 
(voir la section \ref{ssec:nash}). Notre but, dans toute la suite de cette section, est de montrer que pour chaque
diviseur essentiel $\E$  tous les $K$-wedges admissibles centr\'es en $\N(\E)$  se rel\`event \`a la d\'esingularisation  $\widetilde{V}$ de 
la Proposition \ref{pr:des_prim_famil_hyp_A4}, o\`u $K$ est une extension du corps  $\KK$.\\

D'abord, on donne quelques r\'esultats techniques.  Ensuite, on d\'emontre le Th\'eor\`eme \ref{th:premier-the-hyp-A4}.\\

 On note $g$ (resp. Par abus de notation, on note $0$) le point  g\'en\'erique  (resp. ferm\'e) de $\spec K[[t]]$. 
 \'Etant donn\'e un $K$-arc  $\alpha:\spec K[[t]]\rightarrow V$, $\alpha(0)=0$,  
 qui n'est pas concentr\'e en un hyperplan $x_i=0$, $1\leq i\leq 4$,  
 on note  $\mu=(\mu_1,\mu_2,\mu_3,\mu_4)\in\ZZ_{> 0}^{4}$ le {\it vecteur principal} du $K$-arc $\alpha$, c'est-\`a-dire 

\begin{center}
$\mu:=(\ord_t\alpha^{\star }(x_1), \ord_t\alpha^{\star }(x_2),\ord_t\alpha^{\star }(x_3),\ord_t\alpha^{\star }(x_4))$.
\end{center}
o\`u  $\com{\alpha}$ est le comorphisme du $K$-arc $\alpha$. On peut donc \'ecrire le comorphisme $\com{\alpha}$ de la fa\c con suivante:
\begin{center}
$\com{\alpha}(x_i)=t^{\mu_{i}}\alpha_i$, $1\leq i\leq 4$,
\end{center}
o\`u  les $\alpha_i$ sont des s\'eries formelles  inversibles dans $K[[t]]$.\\

Dans la proposition suivante, on utilise la notation  ci-dessus et de la Proposition \ref{pr:des_prim_famil_hyp_A4}.\\

On remarque que si un $K$-arc $\alpha:\spec K[[t]]\rightarrow V$ n'est pas concentr\'e en $0$, alors $\alpha$ se rel\`eve \`a $\widetilde{V}$, car le morphisme $\pi$ est une d\'esingularisation de $V$. De plus, si le point  $\widetilde{\alpha}(0)$  est le point g\'en\'erique d'une composante irr\'eductible de la fibre exceptionnelle de $\pi$, alors le $K$-arc $\alpha$ n'est pas concentr\'e en une hypersurface de $V$.

\begin{proposition} 
\label{pr:vect-prin} Soient  $\pi:\widetilde{V}\rightarrow V$ la d\'esingularisation de la 
Proposition \ref{pr:des_prim_famil_hyp_A4}, $\E$  
une composante irr\'eductible de la fibre exceptionnelle de $\pi$ et $\alpha$  un $K$-arc qui n'est pas concentr\'e en $0$ 
tel que  le rel\`evement $\widehat{\alpha}$ de $\alpha$ \`a $\widetilde{V}$ est transverse \`a $\E$. 
De plus, on suppose que   $\widehat{\alpha}(0)$ est le point g\'en\'erique de $\E$. Alors, on a:

\begin{itemize}
 \item[i)] si le diviseur $\E$ est le diviseur $\E_0$, alors le vecteur principal $\mu$ du $K$-arc $\alpha$ est le vecteur $\rho_0=(p,p,1,1)$;
\item[ii)] si le diviseur $\E$ est le diviseur $\E_{i,j}$, $1\leq i\leq q$, $1\leq j\leq p-1$, alors le vecteur principal $\mu$ du $K$-arc $\alpha$ est le vecteur $\rho_j=(p-j,p-j,1,1)$;
\end{itemize}
\end{proposition}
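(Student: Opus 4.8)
The plan is to reduce everything to an explicit computation in the toric charts already constructed for Proposition~\ref{pr:des_prim_famil_hyp_A4}. \'Ecrivons $\E$ comme composante irr\'eductible de $\D_j\cap\widetilde V$, avec la convention $j=0$ dans le cas $i)$, o\`u $\D_0\cap\widetilde V=\E_0$. Je choisirais un c\^one maximal $\sigma\in\Sigma$ ayant $\rho_j$ pour vecteur extr\'emal et tel que $\E$ rencontre la carte affine $U_{\sigma}\cong\AF_{\KK}^4$ (par exemple $\sigma=\sigma_{2j2}$ pour $1\leq j<p-1$, $\sigma=\sigma_{2\;p-1}$ pour $j=p-1$, $\sigma=\sigma_3$ pour $j=0$); le point g\'en\'erique $\xi_{\E}$ de $\E$ appartient alors \`a $U_{\sigma}$. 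Sur $U_{\sigma}$, $\pi$ est donn\'e par les mon\^omes $\pi^{\star}(x_i)=\prod_{k}y_k^{\langle w_k,\e_i\rangle}$, o\`u $w_1,\dots,w_4$ sont les vecteurs extr\'emaux de $\sigma$; soit $y_{\ell}$ la coordonn\'ee associ\'ee au rayon $\rho_j$, de sorte que $\D_j\cap U_{\sigma}=\{y_{\ell}=0\}$.

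La premi\`ere \'etape est de d\'ecrire $\widetilde V$ au voisinage de $\xi_{\E}$. En composant $\f=\h_q+\hk_{pq}$ avec $\pi$ et en utilisant l'homog\'en\'eit\'e de $\h_q$ et de $\hk_{pq}$, on factorise $\f\circ\pi$ comme un mon\^ome multipli\'e par un polyn\^ome $g$, avec $\widetilde V\cap U_{\sigma}=\{g=0\}$: dans la carte $\sigma_{2j2}$ (ou $\sigma_{2\;p-1}$ si $j=p-1$) on retrouve $g=\h_q(y_1,1)+y_2^{q(j-1)}y_3^{qj}\hk_{pq}(1,y_4)$ comme dans la preuve de la Proposition~\ref{pr:des_prim_famil_hyp_A4}, et dans la carte $\sigma_3$ on a $g=\h_q(y_1,y_2)+\hk_{pq}(1,y_4)$, qui ne fait pas intervenir $y_{\ell}=y_3$. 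Dans le premier cas l'exposant de $y_{\ell}$ dans le second terme vaut $qj\geq 2$, si bien que $\D_j\cap\widetilde V\cap U_{\sigma}=\{y_{\ell}=0,\,\h_q(y_1,1)=0\}$ se s\'epare en les $q$ composantes associ\'ees aux racines de $\h_q(y,1)$, racines qui sont simples car $\h_q$ est sans facteur multiple et de degr\'e exact $q$ en cette variable (Remarque~\ref{re:hq-hpq_no-div}); dans le second cas $\widetilde V\cap U_{\sigma_3}$ est un produit dont $\E_0$ est la tranche $\{y_3=0\}$. Dans les deux situations, au point g\'en\'erique $\xi_{\E}$, la vari\'et\'e $\widetilde V$ est lisse, $y_{\ell}$ est une \'equation locale de $\E$ dans $\widetilde V$ de multiplicit\'e un, et aucune des autres coordonn\'ees $y_k$ ($k\neq\ell$) ne s'annule.

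La seconde \'etape est de traduire les hypoth\`eses sur $\widehat\alpha$ en valuations. Comme $\widehat\alpha(0)=\xi_{\E}$ et que $\widehat\alpha$ est transverse \`a $\E$, le comorphisme $\widehat\alpha^{\star}$ envoie l'anneau de valuation discr\`ete $\mathcal{O}_{\widetilde V,\xi_{\E}}$ dans $K[[t]]$ en transformant l'uniformisante $y_{\ell}$ en une s\'erie d'ordre $1$ et toute unit\'e en une unit\'e; par cons\'equent $\ord_t\circ\widehat\alpha^{\star}$ induit sur $K(\widetilde V)=K(V)$ la valuation divisorielle $\operatorname{val}_{\E}$. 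Puisque $\pi\circ\widehat\alpha=\alpha$ et que, d'apr\`es l'\'etape pr\'ec\'edente, parmi les $y_k$ seule $y_{\ell}$ contribue \`a $\operatorname{val}_{\E}(\pi^{\star}x_i)$, avec le coefficient $\langle w_{\ell},\e_i\rangle=\langle\rho_j,\e_i\rangle$, on conclut
\[
\mu_i=\ord_t\com{\alpha}(x_i)=\ord_t\widehat\alpha^{\star}(\pi^{\star}x_i)=\operatorname{val}_{\E}(\pi^{\star}x_i)=\langle\rho_j,\e_i\rangle,
\]
autrement dit $\mu=\rho_j=(p-j,p-j,1,1)$ dans le cas $ii)$ et $\mu=\rho_0=(p,p,1,1)$ dans le cas $i)$; au passage, ces \'egalit\'es montrent que $\alpha$ n'est concentr\'e sur aucun hyperplan $x_i=0$, de sorte que son vecteur principal est bien d\'efini.

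Le principal obstacle me semble \^etre la justification que $y_{\ell}$ est une \'equation locale de $\E$ dans $\widetilde V$ de multiplicit\'e exactement un (et non d'une puissance sup\'erieure), ainsi que le fait que $\E$ n'est contenue dans aucun autre diviseur torique; les deux reposent sur les cartes explicites ci-dessus, l'ingr\'edient d\'ecisif \'etant la simplicit\'e des racines de $\h_q(y,1)$ (et de $\h_q(1,y)$ dans les cartes $\sigma_{1jk}$), cons\'equence directe de l'absence de facteur multiple de $\h_q$. Tout le reste n'est qu'un d\'ecompte de valuations de mon\^omes.
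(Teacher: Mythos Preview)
Your proof is correct and follows essentially the same strategy as the paper's: work in a toric chart $U_\sigma$ where $\rho_j$ is an extremal ray, use the hypotheses on $\widehat\alpha$ to determine the $t$-orders of the chart coordinates, and read off $\mu=\rho_j$. The only cosmetic difference is that the paper encodes this computation via the dual basis $\{m_i\}$ of the extremal vectors of $\sigma$ (obtaining $\langle m_i,\mu\rangle=\ord_t\widehat\alpha^{\star}(y_i)=\delta_{i\ell}$, hence $\mu=\rho_j$), whereas you phrase it as identifying $\ord_t\circ\widehat\alpha^{\star}$ with the divisorial valuation $\operatorname{val}_{\E}$ and evaluating directly on the monomials $\pi^{\star}(x_i)$; these are the same calculation.
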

 \begin{proof}
 La d\'emonstration est analogue \`a celle de la Proposition $2.12$ de l'article \cite{Ley11a}. Dans le but de fournir
 un texte auto-contenu, nous en donnerons quand m\^eme une preuve.\\
 
 Comme $\pi: X(\Sigma)\rightarrow \AF^{4}$ est une r\'esolution plong\'ee de $V$, il existe un diviseur torique exceptionnel  $\D$ de $\pi$
qui est transverse \`a $\widetilde{V}$ et tel que $\E$ est une composante irr\'eductible  de $\D\cap \widetilde{V}$. 

\'Etant donn\'e un vecteur extremal $\varrho$ de $\Sigma$, on note $\D_{\varrho}$ le diviseur torique 
associ\'e \`a $\varrho$. 
Soit $\varrho_1$ le vecteur extremal de $\Sigma$ tel que $\D=\D_{\varrho_1}$ et  soient $\varrho_i$, $2\leq i\leq 4$ 
des vecteurs extr\'emaux de $\Sigma$  adjacents \`a $\varrho_1$, c'est-\`a-dire il existe un c\^one $\sigma\in \Sigma$ 
de dimension $4$ tel que les vecteurs $\varrho_i$, $1\leq i\leq 4$,  sont vecteurs extr\'emaux de $\sigma$.

 L'entier $i\in \{1,2,3,4\}$ \'etant fix\'e, soit  $m_i\in \ZZ^4$ un vecteur tel que  le caract\`ere  $\chi^{m_i}$ associ\'e \`a 
   $m_i$ soit  une \'equation du diviseur  $\D_{\varrho_i}\cap U_{\sigma}$, 
   (On rappelle que $U_{\sigma}$ est  l'ouvert torique  de $X(\Sigma)$ associ\'e au c\^one  $\sigma$).
 On a donc $\langle m_i,\varrho_j \rangle=\delta_{ij}$, o\`u $\delta_{ij}$ est le symbole de Kronecker. 
 Quitte \`a remplacer les vecteurs $\varrho_i$, $2\leq i\leq 4$, on peut  supposer que  $U_{\sigma}\cap E\neq \emptyset$.

On rappelle que $\widehat{V}$ est  transverse au diviseur $\D_{\rho_1}$. 
Comme le $K$-arc $\widehat{\alpha}$ est transverse \`a $\E$, le $K$-arc $\widehat{\alpha}$ 
est transverse \`a $\D_{\rho_1}$.

Soit $\mu$ le vecteur principal  de l'arc $\alpha$.  
Comme $\widehat{\alpha}(0)$ est le point g\'en\'erique de $\E$, on peut supposer que 
$\widehat{\alpha}(0)\not \in \D_{\varrho_i} $, $i=2,3,4$.
Par cons\'equent, on obtient que  $\langle m_i, \mu \rangle=\delta_{i 1}$,  $1\leq i\leq 4$.  
Ceci implique que $\mu=\varrho_{1}$. En utilisant explicitement la r\'esolution $\pi$, on ach\`eve la d\'emonstration de la proposition.
\end{proof}

\'Etant fix\'e une composante irr\'eductible $\E$ de la fibre exceptionnelle du morphisme 
$\pi:\widetilde{V}\rightarrow V$ 
($\pi$ est la d\'esingularisation de la Proposition \ref{pr:des_prim_famil_hyp_A4}), 
on consid\`ere un $K$-wedge $\omega:\spec K[[s,t]]\rightarrow V$,  tel que son centre (resp. son arc g\'en\'erique)  est le point g\'en\'erique de 
$\N(\E)$, not\'e $\alpha_{\E}$, (resp. est un point qui appartient \`a $V_{\infty}^{s}$),
et on pose 

\begin{center}
$(\eta_1,\eta_2,\eta_3,\eta_4):=(\ord_t\com{\omega}(x_1),\ord_t\com{\omega}(x_2),\ord_t\com{\omega}(x_3),\ord_t\com{\omega}(x_{4}))\in \ZZ^{4}_{>0}$, 
\end{center}
o\`u $\com{\omega}$ est le comorphisme du $K$-wedge $\omega$.  On peut \'ecrire le comorphisme $\com{\omega}$ de la fa\c con suivante:
\begin{center}
$\com{\omega}(x_i)=t^{\eta_{i}}\varphi_i$, $1\leq i\leq 4$,
\end{center}
o\`u  les  $\varphi_i$ sont des s\'eries formelles  dans $K[[s,t]]$ qui ne sont pas divisibles par $t$.\\

On pose 
\begin{center}
 $(\mu_1,\mu_2,\mu_3,\mu_4):=(\ord_t\com{\alpha_{\E}}(x_1),\ord_t\com{\alpha_{\E}}(x_2),\ord_t\com{\alpha_{\E}}(x_3),\ord_t\com{\alpha_{\E}}(x_{4})),$
\end{center}
le vecteur principal du $\KK_{\alpha_{\E}}$-arc $\alpha_{\E}$, o\`u $\KK_{\alpha_{\E}}$ est le corps r\'esiduel du point g\'en\'erique de
$\N(\E)$.\\

 D'apr\`es la Proposition \ref{pr:vect-prin}, il existe  un entier $0\leq j\leq p-1$, tel que le vecteur 
principal du $\KK_{\alpha_{\E}}$-arc $\alpha_{\E}$ est le vecteur $(\mu_1,\mu_2,\mu_3,\mu_4)=(p-j,p-j,1,1)$.
La proposition suivante est le r\'esultat cl\'e de la preuve du Th\'eor\`eme \ref{th:premier-the-hyp-A4}
\begin{proposition}
\label{pr:ser_for_inv_prem_cas_A4}
 Les s\'eries formelles $\varphi_i$, $1\leq i\leq 4$,  sont inversibles.
\end{proposition}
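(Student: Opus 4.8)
The plan is to analyze the wedge $\omega$ by pulling it back through the toric resolution $\pi : X(\Sigma) \to \AF^4_{\KK}$ and tracking the $t$-orders of the coordinates $\com{\omega}(x_i) = t^{\eta_i}\varphi_i$ along the special fibre $s = 0$ and the generic fibre. First I would recall that, by construction, the center of $\omega$ is the generic point $\alpha_{\E}$ of $\N(\E)$, whose principal vector is $(\mu_1,\mu_2,\mu_3,\mu_4)=(p-j,p-j,1,1)$ by Proposition \ref{pr:vect-prin}; hence the specialization $\com{\omega}(x_i)|_{s=0}$ has $t$-order exactly $\mu_i$, which forces $\eta_i \le \mu_i$ for each $i$. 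The problem is therefore to show that the $\varphi_i$, viewed in $K[[s,t]]$, are not divisible by $s$ either (they are already not divisible by $t$ by definition), equivalently that $\eta_i = \mu_i$ for all $i$, equivalently that the generic arc of $\omega$ has the same principal vector as $\alpha_{\E}$.

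Next I would use that $\widehat{\alpha}_{\E}$ is transverse to $\E$ (and to the ambient toric divisor $\D_{\rho_j}$ of which $\E$ is a component of $\D_{\rho_j}\cap\widetilde V$), together with the explicit chart description of $\pi$ on the cone $\sigma_{2j2}$ (or the relevant cone containing $\rho_{j-1},\rho_j$) given in the proof of Proposition \ref{pr:des_prim_famil_hyp_A4}: in such a chart $\widetilde V\cap U$ is cut out by an equation of the form $\h_q(y_1,1) + y_2^{q(j-1)}y_3^{qj}\hk_{pq}(1,y_4)=0$ and the relevant coordinate $y_2$ or $y_3$ pulls back from a linear combination of the $x_i$. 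The generic arc of $\omega$ lands in $V_{\infty}^s$, so it also lifts to $\widetilde V$ and its lift hits the exceptional fibre; the key point will be to show its lift still meets the same divisor $\E$ and is still transverse to it. Here I expect to invoke the non-degeneracy of $\f$ with respect to the Newton boundary $\Gamma(\f)$, which controls the initial forms of $\com{\omega}(\f(x))\equiv 0$ and prevents the orders $\eta_i$ from dropping strictly below $\mu_i$: if some $\eta_{i_0} < \mu_{i_0}$ then the initial $(t$-)form of the pulled-back equation would give a nontrivial relation among the leading coefficients that, by non-degeneracy, can only vanish on a coordinate hyperplane — contradicting that $\omega$ is not concentrated in any $x_i=0$.

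The main obstacle will be the last step: upgrading the inequality $\eta_i \le \mu_i$ (which is immediate from specialization at $s=0$) to an equality, i.e. ruling out that the $s$-generic member of the wedge has a \emph{smaller} principal vector than the special one. For this I would argue as in Proposition \ref{pr:vect-prin}: if $m_i\in\ZZ^4$ is the character giving a local equation of $\D_{\varrho_i}$ in a chart $U_\sigma$ meeting $\E$, with $\varrho_1$ the extremal vector of $\D$ and $\varrho_2,\varrho_3,\varrho_4$ adjacent, then transversality of $\widehat{\alpha}_{\E}$ to $\E$ forces $\langle m_i,\mu\rangle = \delta_{i1}$ and hence $\mu=\varrho_1=\rho_j$. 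I would then show that the lift $\widehat\omega$ of the wedge — which exists over the generic point of $\spec K[[s]]$ because that arc lies in $V_\infty^s$ and $\pi$ is an isomorphism away from the singular locus, extended over $s=0$ by properness after the relevant normalization argument — has center equal to $\widehat\alpha_\E$ and therefore cannot have its generic arc escape to a lower-dimensional stratum of the exceptional fibre; consequently $\langle m_i,(\eta_1,\dots,\eta_4)\rangle = \delta_{i1}$ as well, giving $(\eta_1,\dots,\eta_4)=\rho_j=(\mu_1,\dots,\mu_4)$. Combined with $\varphi_i$ not divisible by $t$, this yields that each $\varphi_i$ is a unit in $K[[s,t]]$, which is the assertion of Proposition \ref{pr:ser_for_inv_prem_cas_A4}.
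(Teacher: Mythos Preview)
Your reformulation ``$\varphi_i$ invertible $\iff \eta_i=\mu_i$'' is correct, but the parenthetical ``not divisible by $s$ either'' is a red herring: since $\varphi_i|_{s=0}=t^{\mu_i-\eta_i}\alpha_i$ with $\alpha_i$ a unit, the $\varphi_i$ are \emph{never} divisible by $s$. Invertibility is about the constant term, not merely about $s$- and $t$-divisibility (think of $s+t$).

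The real problem is the last paragraph, which is circular. You propose to lift the wedge $\omega$ to $\widetilde V$: the generic arc lifts (fine), and then you ``extend over $s=0$ by properness after the relevant normalization argument''. This extension is precisely what is at stake. The valuative criterion of properness gives extensions across codimension-one points of a normal base, but $\spec K[[s,t]]$ is two-dimensional and the point you must fill in is the closed point; properness does not do this for you, and no normalization argument fixes it. Indeed, right after the statement the paper explains that Proposition~\ref{pr:ser_for_inv_prem_cas_A4} is what \emph{implies} that $\omega$ lifts to $\widetilde V$ (this is Reguera's criterion). If your argument worked it would show every admissible wedge lifts for any normal isolated singularity, hence the Nash map is always bijective --- contradicting the Ishii--Koll\'ar and de~Fernex counterexamples in dimension $\geq 3$. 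So the step ``the lift $\widehat\omega$ exists over all of $\spec K[[s,t]]$ and has center $\widehat\alpha_\E$'' simply cannot be justified a priori.

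The paper's proof is entirely different and does not attempt to lift $\omega$. Since $\mu_3=\mu_4=1$, the series $\varphi_3,\varphi_4$ are units for free. One then works directly with the equation
\[
\h_q(t^{\eta_1}\varphi_1,t^{\eta_2}\varphi_2)+t^{pq}\hk_{pq}(\varphi_3,\varphi_4)=0
\]
in $K[[s,t]]$. An order comparison forces $\eta_1=\eta_2$; if $\eta_1<p$ one factors $\h_q(x_1,x_2)=\prod_{i=1}^q(a_ix_1+b_ix_2)$ and observes that at most one linear factor $a_i\varphi_1+b_i\varphi_2$ can be divisible by $t$, so all of $t^{(p-\eta_1)q}$ is absorbed by a single factor. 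The key input is then a separate lemma showing that $\hk_{pq}(\varphi_3,\varphi_4)$ is a \emph{unit}: this is proved by reading off the constant terms $a_3,a_4$ of $\varphi_3,\varphi_4$ in a suitable toric chart of $\pi$ and using that $\widehat\alpha(0)$ is the generic point of $\E$ to get $\hk_{pq}(a_3,a_4)\neq 0$. From this unit one deduces that some other linear combination $a_2\varphi_1+b_2\varphi_2$ is a unit, and together with $\mu_1-\eta_1=\mu_2-\eta_2$ this forces $\varphi_1,\varphi_2$ to be units. The specific shape of $\f=\h_q(x_1,x_2)+\hk_{pq}(x_3,x_4)$ is used essentially; non-degeneracy with respect to $\Gamma(\f)$ alone, as you invoke in the middle paragraph, is not enough.
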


D'abord, finissons la preuve du Th\'eor\`eme \ref{th:premier-the-hyp-A4}.
On remarque que  la Proposition \ref{pr:ser_for_inv_prem_cas_A4}  
est valable pour toute composante irr\'eductible $\E$ de la fibre exceptionnelle de la
d\'esingularisation $\pi:\widetilde{V}\rightarrow V$ 
et pour tout $K$-wedge $\omega:\spec K[[s,t]]\rightarrow V$, 
tel que son centre (resp. son arc g\'en\'erique)  est le point g\'en\'erique de $\N(\E)$
(resp. est un point qui appartient \`a $V_{\infty}^{s}$). 
 En vertu de la Proposition
\ref{pr:ser_for_inv_prem_cas_A4}, le $K$-wedge $\omega$ se rel\`eve \`a $\widetilde{V}$. 
Par cons\'equent, tout diviseur exceptionnel $\E$ de la 
d\'esingularisation $\pi:\widetilde{V}\rightarrow V$ est  un  diviseur essentiel sur $V$ qui appartient \`a l'image de l'application de Nash
(voir section \ref{ssec:nash}).

\begin{proof}[{\it D\'emonstration de la Proposition} \ref{pr:ser_for_inv_prem_cas_A4}]
 
 On d\'efinit l'application suivante:\\
 
  $\FI:K[[s,t]]\backslash\{0\}\rightarrow \ZZ_{\geq 0}$, o\`u $\FI(\phi)$ 
  est le nombre de facteurs irr\'eductibles de $\phi$ compt\'es avec multiplicit\'e.\\

 Alors  on a:

\begin{center}
 $\FI(\varphi_i)\leq \mu_i-\eta_i$, $1\leq i\leq 4$,
\end{center} 
 De plus,  $\varphi_i$ $1\leq i\leq 4$  est inversible si et seulement si $\mu_i-\eta_i=0$.\\

On rappelle qu'il existe  un entier $0\leq j\leq p-1$, tel que le vecteur 
principal du $\KK_{\alpha_{\E}}$-arc $\alpha_{\E}$ est le vecteur $(\mu_1,\mu_2,\mu_3,\mu_4)=(p-j,p-j,1,1)$. Par cons\'equent,  on a  $\eta_3=\eta_4=\mu_3=\mu_4=1$ et les s\'eries  formelles $\varphi_3$, $\varphi_4$ sont inversibles. \\

Si $j=p-1$, alors $\mu_1=\mu_2=\eta_1=\eta_2=1$. Par cons\'equent, les s\'eries $\varphi_1$ et $\varphi_2$ sont inversibles. Ce qui ach\`eve la d\'emonstration dans ce cas. \\

Dans la suite, on suppose que $j<p-1$. \\

Le $K$-wedge $\omega$ satisfait l'\'equation $\h_q(x_1,x_2)+\hk_{pq}(x_3,x_4)=0$, ainsi on obtient:

\begin{center}
 $\h_q(t^{\eta_1}\varphi_1,t^{\eta_2}\varphi_2)+t^{pq}\hk_{pq}(\varphi_3,\varphi_4)=0$.
\end{center}

 On rappelle que $\h_q(x_1,x_2)$ est un polyn\^ome homog\`ene de degr\'e $q$ sans facteur multiple et que 
 $x_1$ et $x_2$ 
ne le divisent pas (voir la Remarque \ref{re:hq-hpq_no-div}).  

On remarque que $\eta_i\leq \mu_i\leq p$.
Si on suppose que $\eta_1\neq \eta_2$, alors $\ord_t \h_q(t^{\eta_1}\varphi_1,t^{\eta_2}\varphi_2)
 <pq$.  Ce qui n'est pas possible car $\ord_t t^{pq}\hk_{pq}(\varphi_3,\varphi_4)\geq pq$. D'o\`u $\eta_1=\eta_2$.
 
 Par cons\'equent, on a: 
\begin{center}
 $\h_q(\varphi_1,\varphi_2)+t^{(p-\eta_1)q}\hk_{pq}(\varphi_3,\varphi_4)=0$.
\end{center}

Si $\eta_1=p$, alors $\mu_1-\eta_1=\mu_2-\eta_2=0$ car $\mu_i\leq p$, $i\in \{1,2\}$, et $\eta_1=\eta_2$.
Comme $\FI(\varphi_1)=\FI(\varphi_2)=0$, on obtient que les s\'eries formelles $\varphi_1$ et $\varphi_2$
sont inversibles.  Ce qui ach\`eve la d\'emonstration dans ce cas.\\ 

Dans  la suite, on suppose que $\eta_1<p$.\\

D'apr\`es la Remarque \ref{re:hq-hpq_no-div}, on peut  \'ecrire le   polyn\^ome $\h_q(x_1,x_2)$ de la fa\c con suivante:

\begin{center}
$\h_q(x_1,x_2) =\prod^{q}_{i=1}(a_ix_1+b_ix_2)$,
\end{center}
o\`u $a_i\neq 0$, $b_i\neq 0$, pour tout $1\leq i\leq q$. Ainsi on obtient la relation suivante:

\begin{center}
 $\prod^{q}_{i=1}(a_i\varphi_1+b_i\varphi_2)=-t^{(p-\eta_1)q}\hk_{pq}(\varphi_3,\varphi_4)$
\end{center}

On remarque que $t$ divise $a_i\varphi_1+b_i\varphi_2$ si est seulement si $t$ ne divise pas $a_{i'}\varphi_1+b_{i'}\varphi_2$, pour tout $i'\neq i$. En effet, $\h_q$ est un polyn\^ome sans facteur multiple. Donc, s'il existe $1\leq i, i'\leq q$ tel que 
$t$ divise  $a_{i}\varphi_1+b_{i}\varphi_2$ et $a_{i'}\varphi_1+b_{i'}\varphi_2$, alors $t$ divise $\varphi_1$ et $\varphi_2$.
Ce  qui est absurde.

Sans perte de g\'en\'eralit\'e, on peut supposer que $a_1\varphi_1+b_1\varphi_2=\lambda t^{(p-\eta_1)q}$, o\`u $\lambda$ est une s\'erie formelle qui appartient \`a $K[[s,t]]$. Ainsi on obtient la relation suivante:

\begin{center}
 $\lambda \prod^{q}_{i=2}(a_i\varphi_1+b_i\varphi_2)=\hk_{pq}(\varphi_3,\varphi_4)$.
\end{center}

Le lemme suivant est le r\'esultat cl\'e pour la preuve de la Proposition \ref{pr:ser_for_inv_prem_cas_A4}
 
\begin{lemme}
\label{le:ser_for_inv_prem_cas_A4}
  La s\'erie formelle  $\hk_{pq}(\varphi_3,\varphi_4)$ est inversible.
\end{lemme}

D'abord, finissons la preuve de la Proposition \ref{pr:ser_for_inv_prem_cas_A4}. 
Comme $\mu_1-\eta_1=\mu_2-\eta_2$, la s\'erie formelle $\varphi_1$ est inversible si et seulement si 
$\varphi_2$ l'est.

D'apr\`es le lemme pr\'ec\'edent, la s\'erie formelle  $\hk_{pq}(\varphi_3,\varphi_4)$ est inversible. 
Ce qui implique que la s\'erie formelle $\lambda \prod^{q}_{i=2}(a_i\varphi_1+b_i\varphi_2)$ est inversible. 
En particulier  $a_2\varphi_1+b_2\varphi_2$ est inversible, d'o\`u les s\'eries formelles 
$\varphi_1$ et $\varphi_2$ sont inversibles.\\

\begin{proof}[{\it D\'emonstrations du Lemme \ref{le:ser_for_inv_prem_cas_A4}}]
On rappelle que $\omega$ est un $K$-wedge admissible centr\'e en $\N(E)$ (voir la Section \ref{ssec:nash}) et 
que  $\alpha_{\E}$ est le  point g\'en\'erique de $\N(\E)$.

Soit $\lambda:\spec K[[t]]\rightarrow \spec K[[s,t]]$ le morphisme induit par 
l'homomorphisme canonique $\lambda^{\star}:K[[s,t]]\rightarrow K[[s,t]]\slash (s)=K[[t]]$. 
Alors, le morphisme $\alpha=\omega\circ \lambda$ est un $K$-arc sur $V$.
On remarque que le $K$-arc $\alpha$ est un $K$-point de $V_{\infty}$ au-dessus du point  $\alpha_{\E}$.

Soit $\pi: \widetilde{V}\rightarrow V$ la d\'esingularisation de la
Proposition \ref{pr:des_prim_famil_hyp_A4}. En utilisant la propri\'et\'e 
fonctorielle d'espace d'arcs $V_{\infty}$ (voir le Th\'eor\`eme \ref{th:prop-fonct-arc}), on obtient que  le rel\`evement $\widehat{\alpha}$ de $\alpha$ \`a $\widetilde{V}$ est transverse au diviseur  $\E$ et  que   $\widehat{\alpha}(0)$ est le point g\'en\'erique de $\E$, car le centre du $K$-wedge $\omega$ est le point g\'en\'erique de 
$\N(\E)$. En particulier, le $K$-arc $\alpha:\spec K[[t]]\rightarrow V$ n'est pas concentr\'e en un hyperplan $x_i=0$, $1\leq i\leq 4$.\\

On peut \'ecrire le comorphisme $\com{\alpha}$ du $K$-arc $\alpha$ de la fa\c con suivante:
\begin{center}
$\com{\alpha}(x_i)=t^{\mu_{i}}\alpha_i$, $1\leq i\leq 4$,
\end{center}
o\`u  les  $\alpha_i$ sont des s\'eries formelles  inversibles dans $K[[t]]$. On note $a_i$ le terme constant de la s\'erie inversible
$\alpha_i$, $1\leq i\leq 4$.

On remarque que pour $i\in\{3,4\}$, on a  $\varphi_i=a_i+\varphi_i'$, o\`u  $\varphi_i'\in K[[s,t]]$, $i\in \{3,4\}$, est une s\'erie formelle  non inversible, car les s\'eries formelles $\varphi_3$ et $\varphi_4$ sont inversibles. Ainsi, on a:
\begin{center}
 $\hk_{pq}(\varphi_3,\varphi_4)=\hk_{pq}(a_3,a_4)+\psi$,
\end{center}
o\`u $\psi\in K[[s,t]]$ est une s\'erie formelle non inversible.  
Par cons\'equent, la s\'erie  $\hk_{pq}(\varphi_3,\varphi_4)$ est inversible si et seulement si $\hk_{pq}(a_3,a_4)\neq 0$. \\

On rappelle que d'apr\`es la Proposition \ref{pr:vect-prin}, il existe un entier  $0\leq j< p-1$, tel que le vecteur 
principal du $K$-arc $\alpha$ est le vecteur $(\mu_1,\mu_2,\mu_3,\mu_4)=(p-j,p-j,1,1)$.
\\

Soit $j'=j+1$ et on consid\`ere le c\^one $\sigma_{2 j' 2}\in \Sigma$ (voir la Proposition \ref{pr:GDEN-prim-ex-A4}). On rappelle que   $\sigma_{2 j' 2}$ est le c\^one r\'egulier  engendr\'e par les vecteurs $(1,0,0,0), (0,0,0,1)$ ,  $\rho_{j+1}=(p-j-1,p-j-1,1,1)$ et $\rho_{j}=(p-j,p-j,1,1)$. On note $U$ ($\cong \AF^4_{\KK}$) l'ouvert torique qui est en correspondance avec le c\^one $\sigma_{2 j'2}$.\\

La restriction \`a $U$ du morphisme torique  $\pi:X(\Sigma)\rightarrow \AF^{4}_{\KK}$ est d\'efinie de la fa\c con suivante:
\begin{center}
 $\pi\mid_U:U\rightarrow \AF^{4}_{\KK}$, $(y_1,y_2,y_3,y_4)\mapsto (x_1,x_2,x_3,x_4):=(y_1y_2^{p-j}y_3^{p-j-1},y_2^{p-j}y_3^{p-j-1},y_2y_3,y_2y_3y_4)$.
\end{center}

On remarque que $U\cap\pi^{-1}(0)=\{y_2=0\}\cup\{y_3=0\}$. La vari\'et\'e $\widetilde{V}\cap U$ est donn\'e par l'\'equation suivante:
\begin{center}
$\h_{q}(y_1,1)+y_2^{qj}y_3^{q(j+1)}\hk_{pq}(1,y_4)=0$.
\end{center}

Si $j=0$,  le diviseur $\E\cap U$ est donn\'e par les \'equations suivantes:

\begin{center}
$\h_{q}(y_1,1)+y_3^{q}\hk_{pq}(1,y_4)=0$, $y_2=0$.
\end{center}

Si $1\leq j\leq p-2$, le diviseur $\E\cap U$, $1\leq i\leq q$, est une composante irr\'eductible de 

\begin{center}
$\h_{q}(y_1,1)=0$, $y_2=0$.
\end{center}

 Soit $w_i\in \KK^{\star}$ la racine de $\h_{q}(y,1)$,  tel que le diviseur  $\E\cap U$ est donn\'e par les \'equations
\begin{center}
$y_1=w_i$,  $y_2=0$.
\end{center}

Comme le $K$-arc $\alpha$ n'est pas concentr\'e  en un hyperplan $x_i$, $1\leq i\leq 4$, et $\widehat{\alpha}(0)\in U\cap \widetilde{V}$, le $K$-arc $\widehat{\alpha}:\spec K[[t]]\rightarrow U\cap \widetilde{V}$ est un morphisme bien d\'efini. On peut donc \'ecrire le comorphisme $\com{\widehat{\alpha}}$ de la fa\c con suivante:
\begin{center}
$\com{\widehat{\alpha}}(y_i)=t^{r_{i}}\widehat{\alpha}_i$, $1\leq i\leq 4$,
\end{center}
o\`u  les  $\widehat{\alpha}_i$ sont des s\'eries formelles  inversibles dans $K[[t]]$ et les  $r_i$ sont positifs, pour tout $1\leq i\leq 4$.\\

Soit $\widehat{a}_i \in K^{\star}$ le terme constant  de $\widehat{\alpha}_i$, $1\leq i\leq 4$.
Si $j=1$ (resp. $j\geq 2$), alors le $K$-arc $\widehat{\alpha}$ est transverse au diviseur $\E_0$ (resp. $\E_{i,j})$) et  le point $\widehat{\alpha}(0)$ est le point g\'en\'erique de $\E_0$ (resp. $\E_{i,j}$). Par cons\'equent, on  a $(r_1,r_2,r_3,r_4)=(0,1,0,0)$ et $\hk_{pq}(1,\widehat{a}_4)\neq 0$.\\

En utilisant le morphisme $\pi$, on obtient que:
 
\begin{center}

$\alpha_3=\widehat{\alpha}_2\widehat{\alpha}_3$ et $\alpha_4=\widehat{\alpha}_2\widehat{\alpha}_3\widehat{\alpha}_4$.
\end{center}
Par cons\'equent, on a 
$\hk_{pq}(a_3,a_4)=(\widehat{a}_2\widehat{a}_3)^{pq}\hk_{pq}(1,\widehat{a}_4)\neq 0$, d'o\`u le lemme.\end{proof}
Avec la d\'emonstration du Lemme \ref{le:ser_for_inv_prem_cas_A4}, on ach\`eve la preuve de la Proposition \ref{pr:ser_for_inv_prem_cas_A4}. \end{proof}
 
\subsection[La deuxi\`eme famille d'exemples]{La deuxi\`eme famille d'exemples} 
\label{se:DFEA4}
On utilise les  notations de la section \ref{se:PFEA4}. 
Soient un entier  $q\geq 3$  et $V$ une hypersurface de $\AF_{\KK}^4$ donn\'ee par une \'equation du type:

\begin{center}
 $\f(x_1,x_2,x_3,x_4):=\h_q(x_1,x_2)+\hk_{q}(x_3,x_4^2)$,
\end{center}
o\`u   $\h_q$ et $\hk_{q}$ sont deux polyn\^omes homog\`enes  de  degr\'e $q$ sans facteur multiple. De plus, 
on suppose que le polyn\^ome $\f$ n'est pas d\'eg\'en\'er\'e par rapport \`a la fronti\`ere de Newton $\Gamma(\f)$ 
et que $x_3$ et $x_4$ ne divisent pas $\hk_{q}(x_3,x_4^2)$.

\begin{example} Le polyn\^ome $\f=x_1^q+x_2^q+x_3^{q}+x_4^{2q}$, $q\geq 3$,  satisfait les hypoth\`eses ci-dessus.  
Dans ce cas la vari\'et\'e $V$ est une hypersurface de Pham-Brieskorn.
\end{example}
   
\begin{remarque}
\label{re:hq_no_div}
\`A un automorphisme  lin\'eaire  de  $\AF_{\KK}^2$ pr\'es,  $x_1$ et  $x_2$  ne divisent pas $\h_q(x_1,x_2)$.\\
\end{remarque}

Le r\'esultat principal de cette section est le th\'eor\`eme suivant:

\begin{theoreme}
\label{th:deux-the-hyp-A4}
 L'application de Nash $\mathcal{N}_V$ associ\'ee \`a l'hypersurface $V$ est bijective et le nombre de diviseurs essentiels sur $V$ est \'egal \`a $2$.
\end{theoreme}
Le r\'esultat suivant est une cons\'equence direct des Th\'eor\`emes  \ref{th:premier-the-hyp-A4}, \ref{th:def-appl-nash} et la
Proposition \ref{pr:des_prim_famil_hyp_A4}.
Soit $\h\in \KK[x_1,x_2,x_3,x_4]$ tel que les exposants des mon\^omes de $\h$ appartiennent \`a l'ensemble $\Gamma_{+}(\f)-\Gamma(\f)$.
On note $V'$ l'hypersurface  de  $\AF_{\KK}^4$ donn\'ee par l'\'equation $\f(x_1,x_2,x_3,x_4)+\h(x_1,x_2,x_3,x_4)=0$.

\begin{corollaire}
 L'application de Nash $\mathcal{N}_{V'}$ associ\'ee \`a l'hypersurface $V'$ 
 est bijective et le nombre de diviseurs essentiels sur $V'$ est \'egal \`a $2$.
\end{corollaire}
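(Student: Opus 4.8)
The plan is to verify that the pair $(V,V')$ satisfies the hypotheses of Theorem \ref{th:def-appl-nash}, and then to combine that theorem with the bijectivity of $\mathcal{N}_{V}$ (Theorem \ref{th:deux-the-hyp-A4}) and with the explicit desingularization of $V$ of this subsection. The starting point is the observation that $\f=\h_q(x_1,x_2)+\hk_q(x_3,x_4^2)$ is quasi-homogeneous of type $(2q,v)$ with $v:=(2,2,2,1)$: every monomial $x_1^ax_2^b$ of $\h_q$ has $\nu_v$-value $2(a+b)=2q$, and every monomial $x_3^cx_4^{2e}$ occurring in $\hk_q(x_3,x_4^2)$ has $\nu_v$-value $2(c+e)=2q$. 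Hence $\Gamma(\f)$ is exactly the face of $\Gamma_{+}(\f)$ on which $v$ is constant equal to $2q$, and the assumption that the exponents of $\h$ lie in $\Gamma_{+}(\f)\setminus\Gamma(\f)$ is equivalent to $\nu_v\h>2q$ --- precisely the condition on $\h$ required in Theorem \ref{th:def-appl-nash}.

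The second step is to check the remaining hypotheses of Theorem \ref{th:def-appl-nash} for $(V,V')$. Since $\mathcal{E}(\h)\subseteq\Gamma_{+}(\f)\setminus\Gamma(\f)$, one gets $\Gamma_{+}(\f+\h)=\Gamma_{+}(\f)$, $\Gamma(\f+\h)=\Gamma(\f)$ and $(\f+\h)_{\gamma}=\f_{\gamma}$ for every compact face $\gamma$ of $\Gamma_{+}(\f)$; so $\f_1:=\f+\h$ remains non-degenerate with respect to $\Gamma(\f)$. As the four axis vertices $(q,0,0,0),(0,q,0,0),(0,0,q,0),(0,0,0,2q)$ of $\Gamma_{+}(\f)$ belong to $\Gamma(\f)$, the corresponding monomials of $\f$ are unaffected by $\h$, so $V'$ contains no positive-dimensional $T$-orbit of $\AF^{4}_{\KK}$; together with non-degeneracy and the vanishing of the linear part of $\f+\h$ (every exponent in $\Gamma_{+}(\f)$ has total degree $\geq q\geq 3$), the same argument as for $V$ shows that $V'$ is a normal hypersurface whose only singular point is the origin, and this singularity is isolated. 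Thus all hypotheses of Theorem \ref{th:def-appl-nash} are in force; in particular (Proposition \ref{pr:cr_nor-f}) there is a toric morphism $\pi\colon X(\Sigma)\to\AF^{4}_{\KK}$ which is an embedded resolution of both $V$ and $V'$, with trivially homeomorphic exceptional fibres for the induced desingularizations $\pi_{0}\colon\widetilde{V}\to V$ and $\pi_{1}\colon\widetilde{V'}\to V'$; by the explicit description of this desingularization (the analogue, for the present $\f$, of Proposition \ref{pr:des_prim_famil_hyp_A4}) its exceptional fibre has exactly two irreducible components, which I denote $\E_{1},\E_{2}$ --- the same two for $V$ and for $V'$.

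It remains to put the pieces together. By Theorem \ref{th:deux-the-hyp-A4}, $\mathcal{N}_{V}$ is bijective and $V$ has exactly two essential divisors; these are necessarily components of the exceptional fibre of $\pi_{0}$, hence they are $\E_{1},\E_{2}$, and both lie in $\operatorname{Im}(\mathcal{N}_{V})$. Theorem \ref{th:def-appl-nash} now yields $\E_{1},\E_{2}\in\operatorname{Im}(\mathcal{N}_{V'})\subseteq\Ess(V')$. Conversely, $\pi_{1}\colon\widetilde{V'}\to V'$ is a desingularization whose exceptional fibre consists of just $\E_{1},\E_{2}$, so any essential divisor of $V'$ is one of these two; hence $\Ess(V')=\{\E_{1},\E_{2}\}$ is a two-element set. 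Comparing cardinalities forces $\operatorname{Im}(\mathcal{N}_{V'})=\Ess(V')$, i.e.\ $\mathcal{N}_{V'}$ is surjective; being always injective (see section \ref{ssec:nash}), $\mathcal{N}_{V'}$ is bijective and the number of essential divisors on $V'$ equals $2$, which is the assertion.

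The one genuinely non-formal step is the verification, in the second paragraph, that $\f+\h$ is still non-degenerate with an isolated singularity and free of positive-dimensional torus orbits: this is what guarantees that $V$ and $V'$ share the embedded resolution $\pi$, and therefore the same pair $\{\E_{1},\E_{2}\}$ of exceptional divisors. Everything else is a formal assembly of Theorems \ref{th:deux-the-hyp-A4} and \ref{th:def-appl-nash} with the always-valid injectivity of the Nash map.
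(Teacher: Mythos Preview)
Your proof is correct and follows exactly the approach indicated by the paper: verify that $\f$ is quasi-homogeneous of type $(2q,(2,2,2,1))$ so that the hypothesis on $\h$ matches that of Theorem \ref{th:def-appl-nash}, check that $\f+\h$ inherits non-degeneracy, the isolated singularity, and the absence of positive-dimensional $T$-orbits, and then combine Theorem \ref{th:deux-the-hyp-A4}, Theorem \ref{th:def-appl-nash}, and Proposition \ref{pr:des_deux_famil_hyp_A4} (the two-component exceptional fibre) together with the injectivity of the Nash map. The paper merely states the corollary as a direct consequence of these three results, so your write-up is a faithful and more detailed rendering of the same argument.
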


D'abord, on utilise la g\'eom\'etrie torique pour r\'esoudre la singularit\'e de $V$. Ensuite, 
on d\'emontre le Th\'eor\`eme \ref{th:deux-the-hyp-A4}. La proposition suivante r\'esulte d'un calcul direct.
\begin{proposition}
\label{pr:deux-pr-sing-iso}
 L'hypersurface $V$ est normale et le point $0$ est son unique point singulier. De plus, $V$ ne contient aucune $T$-orbite de dimension strictement positive.
\end{proposition}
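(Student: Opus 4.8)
The plan is to establish the three assertions by the direct computation announced above, following the model of Proposition~\ref{pr:premier-pr-sing-iso}. First I would describe $\sing V$ through the Jacobian criterion. The partial derivatives of $\f = \h_q(x_1,x_2) + \hk_q(x_3,x_4^2)$ are $\partial_1\f = \partial_1\h_q$, $\partial_2\f = \partial_2\h_q$, $\partial_3\f = (\partial_1\hk_q)(x_3,x_4^2)$ and $\partial_4\f = 2\,x_4\,(\partial_2\hk_q)(x_3,x_4^2)$; the first two depend only on $(x_1,x_2)$ and the last two only on $(x_3,x_4)$, so the common zero locus splits as a product. Since $\h_q$ is homogeneous of degree $q\geq 3$ without multiple factor it is a product of pairwise non-proportional linear forms, so the only singular point of the union of lines $\{\h_q=0\}$ is the origin; combined with Euler's relation $q\,\h_q = x_1\partial_1\h_q + x_2\partial_2\h_q$ this gives $\{\partial_1\h_q = \partial_2\h_q = 0\} = \{x_1=x_2=0\}$, and the same holds for $\hk_q$. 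For $(x_3,x_4)$ I would distinguish two cases: on $\{x_4\neq 0\}$, the vanishing of $\partial_3\f$ and $\partial_4\f$ forces $(x_3,x_4^2)$ to be a common zero of $\partial_1\hk_q$ and $\partial_2\hk_q$, hence $x_3=x_4^2=0$, impossible; on $\{x_4=0\}$ the equation $\partial_4\f=0$ is automatic, while $\partial_3\f$ specializes to $(\partial_1\hk_q)(x_3,0)$, a nonzero scalar multiple of $x_3^{q-1}$ because $\operatorname{char}\KK=0$ and $x_4$ does not divide $\hk_q(x_3,x_4^2)$, forcing $x_3=0$. Hence $\{\partial_1\f=\cdots=\partial_4\f=0\}=\{0\}$; in particular $\f$ is reduced (otherwise this locus would contain a whole hypersurface), the Jacobian criterion applies, and $\sing V=\{0\}$, the origin being a singular point since $q\geq 2$.

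Normality is then formal: $V$ is a hypersurface, hence Cohen--Macaulay, so it satisfies Serre's condition $S_2$; since $\sing V=\{0\}$ has codimension $3\geq 2$ in $V$, the condition $R_1$ also holds, so $V$ is normal; being connected --- two hypersurfaces of $\AF^4_{\KK}$ always intersect --- it is in fact integral, hence genuinely a variety. I expect the only mildly delicate step to be the case distinction forced by the factor $x_4$ in $\partial_4\f$: that extra factor annihilates the fourth equation on $\{x_4=0\}$, and it is exactly there that the hypothesis that $x_4$ does not divide $\hk_q(x_3,x_4^2)$ is used; the rest is bookkeeping with homogeneous polynomials in two variables.

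For the statement about torus orbits I would use that the positive-dimensional $T$-orbits of $\AF^4_{\KK}$ are the sets $O_J=\{x \mid x_i\neq 0\Leftrightarrow i\in J\}$ for $\emptyset\neq J\subseteq\{1,2,3,4\}$, and that $O_J\subseteq V$ if and only if $\f$ vanishes identically on the coordinate subspace $L_J=\{x_i=0 \mid i\notin J\}$, i.e.\ $\f|_{L_J}=0$ as a polynomial. Since the summands $\h_q(x_1,x_2)$ and $\hk_q(x_3,x_4^2)$ involve disjoint sets of variables, no cancellation is possible, so it is enough to check that one of $\h_q|_{L_J}$, $\hk_q(x_3,x_4^2)|_{L_J}$ is nonzero. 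If $J$ meets $\{1,2\}$, then $\h_q|_{L_J}$ is $\h_q$ itself or a nonzero multiple of $x_1^q$ or of $x_2^q$ (Remark~\ref{re:hq_no_div}); if $J$ misses $\{1,2\}$ it meets $\{3,4\}$, and $\hk_q(x_3,x_4^2)|_{L_J}$ is $\hk_q(x_3,x_4^2)$ or a nonzero multiple of $x_3^q$ or of $x_4^{2q}$, because neither $x_3$ nor $x_4$ divides $\hk_q(x_3,x_4^2)$. In all cases $\f|_{L_J}\neq 0$, so $O_J\not\subseteq V$, which completes the plan.
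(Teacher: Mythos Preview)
Your proof is correct and carries out precisely the ``calcul direct'' that the paper announces but omits; the Jacobian computation, the appeal to Serre's criterion, and the orbit-by-orbit check are exactly what is meant. One small slip: the parenthetical ``two hypersurfaces of $\AF^4_{\KK}$ always intersect'' is false in affine space (parallel hyperplanes), so it does not by itself give connectedness of $V$; the correct reason here is that $\f$ is quasi-homogeneous of positive degree for the weights $(2,2,2,1)$, hence so are all its irreducible factors, which therefore all vanish at the origin --- so every irreducible component of $V$ passes through $0$ and $V$ is connected. This is only an aside, since integrality is not part of the statement; the three assertions of the proposition are established exactly as you wrote.
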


 D'apr\`es les r\'esultats \ref{pr:cr_nor-f} et
\ref{pr:deux-pr-sing-iso}, si $\Sigma$ est une subdivision r\'eguli\`ere admissible de $\EN{\f}$, alors le
morphisme torique $\pi:X(\Sigma)\rightarrow \AF_{\KK}^4$ associ\'e \`a la subdivision r\'eguli\`ere admissible
$\Sigma$ de $\EN{\f}$ est une r\'esolution plong\'ee de l'hypersurface $V$.\\

Maintenant, on montre explicitement une subdivision r\'eguli\`ere admissible de $\EN{\f}$.\\

La Figure \ref{fi:poly_newt_deux_fami_A4} repr\'esente la face compacte du poly\`edre de Newton $\Gamma_{+}(\f)$,
c'est-\`a-dire la fronti\`ere de Newton $\Gamma(\f)$. On rappelle que $\f=\h_q(x_1,x_2)+\hk_{q}(x_3,x_4^2)$ et que $x_1$ et  $x_2$  (resp. $x_3$, $x_4$) ne divisent pas $\h_q(x_1,x_2)$ (resp. $\hk_{q}(x_3,x^2_4)$), voir la Remarque \ref{re:hq_no_div}.

\begin{figure}[!h]

\begin{tikzpicture}[x=0.8cm,y=0.8cm ]
\tikzfading[name=fade1,left color=transparent!80,right color=transparent!80, inner color=transparent!80,outer color=transparent!60]
 \tikzfading[name=fade2,left color=transparent!10,right color=transparent!80, inner color=transparent!50,outer color=transparent!30]

   \draw[thick] (180:3)  -- (270:3) -- (0:3)  -- (90:3) -- (180:3);
   \draw[very thick] (90:3) --  (270:3);

\shade[ball color=white, path fading=fade1] (180:3) -- (90:3) -- (270:3);
\shade[ball color=gray, path fading=fade2] (90:3) -- (270:3)-- (0:3);
\foreach \i in {0,0.2,...,5.9}
\draw[gray] (-3+\i,0)--(-3+\i+0.1,0);

\draw (0:4.2) node[inner sep=-1pt,below=-1pt,rectangle,fill=white] {{\small $(0,0,q,0)$}};
\draw (180:4.2) node[inner sep=-1pt,below=-1pt,rectangle,fill=white] {{\small $(q,0,0,0)$}};
\draw (90:3.4) node[inner sep=-1pt,below=-1pt,rectangle,fill=white] {{\small $(0,0,0,2q)$}};
\draw (270:3.4) node[inner sep=-1pt,below=-1pt,rectangle,fill=white] {{\small $(0,q, 0,0)$}};

    \draw[fill=black]  (0:3) circle(0.7mm) (180:3) circle(0.7mm)  (90:3) circle(0.7mm)  (270:3) circle(0.7mm);

  \end{tikzpicture}

\caption{La fronti\`ere de Newton $\Gamma(\f)$, o\`u $\f=\h_q(x_1,x_2)+\hk_{q}(x_3,x_4^2)$.}
\label{fi:poly_newt_deux_fami_A4}
\end{figure}

Soit $H$ un hyperplan qui ne contient pas 
l'origine de $\RR^4$ et tel que $H\cap \RR^4_{\geq 0}$ soit un ensemble compact.
La Figure \ref{fi:EN_newt_deux_fami_A4}  repr\'esente l'intersection de $H$ avec la 
subdivision $\EN{\f}$ de $\RR^4_{\geq 0}$. Chaque sommet du diagramme est identifi\'e avec 
le  vecteur extr\'emal  correspondant.

\begin{figure}[!h]

\begin{tikzpicture}[x=0.8cm,y=0.8cm ]

   \draw[very thick] (180:3)  -- (270:3) -- (0:3)  -- (90:3) -- (180:3);
   \draw[very thick] (90:3) --  (270:3);

\foreach \i in {0,0.2,...,5.9}
\draw[gray] (-3+\i,0)--(-3+\i+0.1,0);

\foreach \i in {0,0.2,...,2.28}
\draw[thick] (-3+\i,0.31*\i)--(-3+\i+0.1,0.31*\i+0.032);

\foreach \i in {0,0.05,...,0.71}
\draw[thick] (0-\i,-3+5.22*\i)--(-\i-0.025,-3+5.22*\i+5.22*0.025);

\foreach \i in {0,0.05,...,0.70}
\draw[thick] (0-\i, 3-3.22*\i)--(-\i-0.025,3-3.22*\i-3.22*0.025);

\foreach \i in {0,0.2,...,3.70}
\draw[thick] (3-\i, 0.19*\i)--(3-\i-0.1,0.19*\i +0.19*0.1);

\draw (0:4.7) node[inner sep=-1pt,below=-1pt,rectangle,fill=white] {{\small $\e_3:=(0,0,1,0)$}};
\draw (180:4.7) node[inner sep=-1pt,below=-1pt,rectangle,fill=white] {{\small $\e_1:=(1,0,0,0)$}};
\draw (90:3.4) node[inner sep=-1pt,below=-1pt,rectangle,fill=white] {{\small $\e_4:=(0,0,0,1)$}};
\draw (270:3.4) node[inner sep=-1pt,below=-1pt,rectangle,fill=white] {{\small $\e_2:=(0,1, 0,0)$}};
\draw (-1,1) node[inner sep=-1pt,below=-1pt,rectangle,fill=white] {{\small $\rho_0$}};

\draw (4.5,1.5) node[inner sep=-1pt,below=-1pt,rectangle,fill=white] {{\small $\rho_0:=(2,2,2,1)$}};

    \draw[fill=black]  (0:3) circle(0.7mm) (180:3) circle(0.7mm)  (90:3) circle(0.7mm)  (270:3) circle(0.7mm) (-0.71,0.71) circle(0.7mm);

  \end{tikzpicture}
\caption{Section de l'\'eventail  de Newton $\EN{\f}$, o\`u $\f=\h_q(x_1,x_2)+\hk_{q}(x_3,x_4^2)$}
\label{fi:EN_newt_deux_fami_A4}
\end{figure}
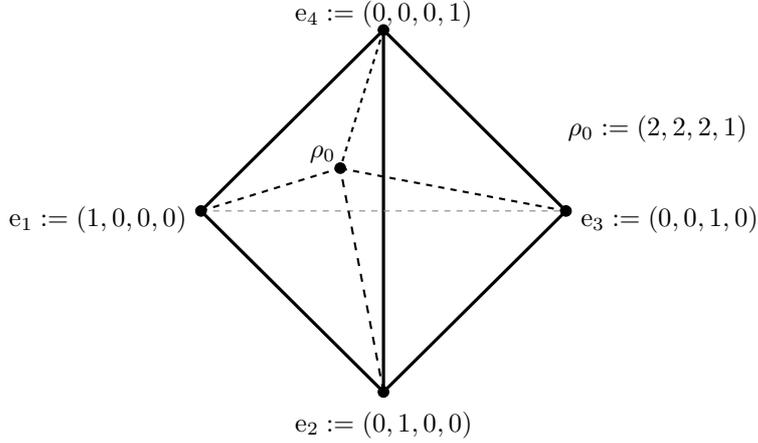

On note $\sigma_j$, $1\leq j\leq 4$, le c\^one de dimension $4$ de $\EN{\f}$ engendr\'e par le vecteur $(2,2,2,1)$ et l'ensemble 
$\{\e_i\mid 1\leq i\leq 4,\;i\neq j\}$.\\

La proposition suivante r\'esulte d'un simple calcul.

\begin{proposition}
 Le c\^one  $\sigma_4$ est r\'egulier.
\end{proposition}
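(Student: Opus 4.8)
La preuve se ram\`ene \`a un calcul de d\'eterminant. Le plan est de v\'erifier que les vecteurs extr\'emaux de $\sigma_4$ constituent une base du r\'eseau $\ZZ^4$, ce qui est pr\'ecis\'ement la condition de r\'egularit\'e rappel\'ee \`a la section \ref{ssec:torique}: un c\^one fortement convexe est r\'egulier lorsque ses vecteurs extr\'emaux font partie d'une base du r\'eseau, et pour un c\^one simplicial de dimension $4$ dans $\ZZ^4$ cela revient \`a dire que ses g\'en\'erateurs en forment une base.

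D'abord, je rappellerais que, par d\'efinition, $\sigma_4$ est le c\^one de dimension $4$ engendr\'e par $\rho_0=(2,2,2,1)$ et par $\e_1=(1,0,0,0)$, $\e_2=(0,1,0,0)$, $\e_3=(0,0,1,0)$. Comme $\sigma_4$ appartient \`a l'\'eventail de Newton $\EN{\f}$, il est fortement convexe. \'Etant de dimension $4$ et engendr\'e par ces quatre vecteurs, il est simplicial; et ces quatre vecteurs sont primitifs (c'est clair pour les $\e_i$, et le p.g.c.d. des coordonn\'ees de $\rho_0$ vaut $1$ puisque sa derni\`ere coordonn\'ee vaut $1$), donc ce sont exactement les vecteurs extr\'emaux de $\sigma_4$.

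Il reste \`a calculer le d\'eterminant de la matrice dont les lignes sont $\e_1,\e_2,\e_3,\rho_0$:
\[
\det\begin{pmatrix} 1 & 0 & 0 & 0\\ 0 & 1 & 0 & 0\\ 0 & 0 & 1 & 0\\ 2 & 2 & 2 & 1\end{pmatrix}=1,
\]
cette matrice \'etant triangulaire inf\'erieure \`a diagonale $(1,1,1,1)$. Le d\'eterminant valant $1$, la famille $\{\e_1,\e_2,\e_3,\rho_0\}$ est une base de $\ZZ^4$; en particulier ces quatre vecteurs sont lin\'eairement ind\'ependants, ce qui justifie \emph{a posteriori} le fait que $\sigma_4$ soit simplicial et que ce soient bien ses vecteurs extr\'emaux. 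D'apr\`es la d\'efinition rappel\'ee \`a la section \ref{ssec:torique}, le c\^one $\sigma_4$ est r\'egulier.

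Il n'y a pas de v\'eritable obstacle dans cet argument: l'unique point \`a surveiller est l'unimodularit\'e de la matrice, qui r\'esulte imm\'ediatement de ce que la derni\`ere coordonn\'ee de $\rho_0$ vaut $1$. On remarquera en revanche que les c\^ones $\sigma_1$, $\sigma_2$ et $\sigma_3$ ne sont pas r\'eguliers (un calcul analogue donne un d\'eterminant \'egal \`a $\pm 2$), ce qui rendra n\'ecessaires les subdivisions effectu\'ees dans la suite, comme pour la premi\`ere famille d'exemples.
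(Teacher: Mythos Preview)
Your proof is correct and follows exactly the approach the paper indicates (``r\'esulte d'un simple calcul''): you identify the four extremal vectors of $\sigma_4$ and verify unimodularity via the determinant, which equals $1$ since the last coordinate of $\rho_0$ is $1$. Your additional remark that $\sigma_1,\sigma_2,\sigma_3$ give determinant $\pm 2$ is also accurate and anticipates the need for the subdivisions in Figure~\ref{fi:sub_sigma12_deux_fami_A4}.
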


La Figure \ref{fi:sub_sigma12_deux_fami_A4} repr\'esente l'intersection du plan $H$ avec une  subdivision des c\^ones $\sigma_1$, $\sigma_2$ et $\sigma_3$. 
\begin{figure}[!h]
\begin{tikzpicture}[x=0.7cm,y=0.7cm ]

   \draw[very thick] (180:3)  -- (300:2.7) -- (0:3)  -- (90:3) -- (180:3);
   \draw[thick] (90:3) --  (300:2.7);
   \draw[thick] (180:3) --  (0.675,0.3296);
 \draw[thick] (0:3) --  (0.675,0.3296);

\foreach \i in {0,0.2,...,5.9}
\draw[gray] (3-\i,0)--(3-\i-0.1,0);


\draw (0:3.4) node[inner sep=-1pt,below=-1pt,rectangle,fill=white] {{\small $\e_3$}};
\draw (-3.3,0) node[inner sep=-1pt,below=-1pt,rectangle,fill=white] {{\small $\e_1$}};
\draw (90:3.4) node[inner sep=-1pt,below=-1pt,rectangle,fill=white] {{\small $\e_4$}};;
\draw (300:3) node[inner sep=-1pt,below=-1pt,rectangle,fill=white] {{\small $\rho_0$}};
\draw (1,0.7) node[inner sep=-1pt,below=-1pt,rectangle,fill=white] {{\small $\rho_1$}};

\draw (-2,2) node[inner sep=-1pt,below=-1pt,rectangle,fill=white] { $\sigma_2$};
    \draw[fill=black](0:3) circle(0.7mm) (180:3) circle(0.7mm)  (90:3) circle(0.7mm)  (300:2.7) circle(0.7mm) (0.675,0.3296) circle(0.7mm);

  \end{tikzpicture}
\begin{tikzpicture}[x=0.7cm,y=0.7cm ]

   \draw[very thick] (180:3)  -- (240:2.7) -- (0:3)  -- (90:3) -- (180:3);
   \draw[thick] (90:3) --  (240:2.7);
   \draw[ thick] (180:3) --  (-0.675,0.3296);
 \draw[thick] (0:3) --  (-0.675,0.3296);

\foreach \i in {0,0.2,...,5.9}
\draw[gray] (3-\i,0)--(3-\i-0.1,0);


\draw (0:3.4) node[inner sep=-1pt,below=-1pt,rectangle,fill=white] {{\small $\e_2$}};
\draw (-3.3,0) node[inner sep=-1pt,below=-1pt,rectangle,fill=white] {{\small $\e_3$}};
\draw (90:3.4) node[inner sep=-1pt,below=-1pt,rectangle,fill=white] {{\small $\e_4$}};;
\draw (240:3) node[inner sep=-1pt,below=-1pt,rectangle,fill=white] {{\small $\rho_0$}};
\draw (-1,0.7) node[inner sep=-1pt,below=-1pt,rectangle,fill=white] {{\small $\rho_1$}};

\draw (-2,2) node[inner sep=-1pt,below=-1pt,rectangle,fill=white] { $\sigma_2$};
    \draw[fill=black](0:3) circle(0.7mm) (180:3) circle(0.7mm)  (90:3) circle(0.7mm)  (240:2.7) circle(0.7mm) (-0.675,0.3296) circle(0.7mm);

\draw (-2,2) node[inner sep=-1pt,below=-1pt,rectangle,fill=white] {$\sigma_1$};

\draw (3.5+0.4,3+0.8) node[inner sep=-1pt,below=-1pt,rectangle,fill=white] {{\small $\e_1:=(1,0,0,0)$}};
\draw (3.5+0.4,2.5+0.8) node[inner sep=-1pt,below=-1pt,rectangle,fill=white] {{\small $\e_2:=(0,1,0,0)$}};
\draw (3.5+0.4,2+0.8) node[inner sep=-1pt,below=-1pt,rectangle,fill=white] {{\small $\e_3:=(0,0,1,0)$}};
\draw (3.5+0.4,1.5+0.8) node[inner sep=-1pt,below=-1pt,rectangle,fill=white] {{\small $\e_4:=(0,0,0,1)$}};

\draw (3.5+0.4,1+0.8) node[inner sep=-1pt,below=-1pt,rectangle,fill=white] {{\small $\rho_0:=(2,2,2,1)$}};
\draw (3.5+0.4,0.5+0.8) node[inner sep=-1pt,below=-1pt,rectangle,fill=white] {{\small $\rho_1:=(1,1,1,1)$}};
\end{tikzpicture}

\begin{tikzpicture}[x=0.7cm,y=0.7cm ]
   \draw[very thick] (180:3)  -- (300:2.7) -- (0:3)  -- (90:3) -- (180:3);
   \draw[thick] (300:2.7) --  (1.5,1.5);
\draw[thick] (300:2.7) --  (0,3);

\foreach \i in {0,0.2,...,4.4}
\draw[thick] (-3+\i,\i/3)--(-3+\i+0.1,\i/3+0.1/3);

\foreach \i in {0,0.2,...,5.9}
\draw[gray] (3-\i,0)--(3-\i-0.1,0);


\draw (0:3.4) node[inner sep=-1pt,below=-1pt,rectangle,fill=white] {{\small $\rho_0$}};
\draw (-3.3,0) node[inner sep=-1pt,below=-1pt,rectangle,fill=white] {{\small $\e_1$}};
\draw (90:3.4) node[inner sep=-1pt,below=-1pt,rectangle,fill=white] {{\small $\e_4$}};;
\draw (300:3) node[inner sep=-1pt,below=-1pt,rectangle,fill=white] {{\small $\e_2$}};
\draw (2,2) node[inner sep=-1pt,below=-1pt,rectangle,fill=white] {{\small $\rho_1$}};

\draw (-2,2) node[inner sep=-1pt,below=-1pt,rectangle,fill=white] { $\sigma_3$};
    \draw[fill=black](0:3) circle(0.7mm) (180:3) circle(0.7mm)  (90:3) circle(0.7mm)  (300:2.7) circle(0.7mm) (1.5,1.5) circle(0.7mm);
\end{tikzpicture}
\caption{Subdivision des c\^ones $\sigma_1$, $\sigma_2$ et $\sigma_3$}
\label{fi:sub_sigma12_deux_fami_A4}
\end{figure}
Pour chaque entier $1\leq j\leq 3$,  on note $\sigma_{j 1}$ (resp. $\sigma_{j 2}$)  le c\^one de dimension $4$ engendr\'e par le vecteur  $\rho_1$ et l'ensemble $\{\rho_0\}\cup\{\e_i\mid 1\leq i\leq 3,\;  i\neq j\}$ (resp. $\{\e_i\mid  i\neq j\}$).\\

On remarque que l'ensemble $\{ \sigma_{j k}\mid 1\leq j\leq 3,1\leq k\leq 2\}\cup \{\sigma_4\}$ engendre un \'eventail, not\'e  $\Sigma$.\\  

Soient $\Bo:=\{e_1,\e_3,\e_3,\e_4\}$ la base ordonn\'ee canonique de $\N$ et  $\Delta$ le c\^one standard  
engendr\'e par $\Bo$. Maintenant, on consid\`ere la subdivision du c\^one  $\Delta$ suivante:\\

 Soit $\Sigma_{1}$  la subdivision \'el\'ementaire de $\Delta$ centr\'ee en $u=\sum_{i=1}^{4}e_i$. Pour chaque entier $j$, $1\leq j\leq 4$, on note  $\Bo_j$ la base ordonn\'ee de $\N$
obtenue en rempla\c cant $\e_j$ par $u$ dans la base $\Bo$. Soient $\Delta_j:=\langle \Bo_j\rangle$ 
le c\^one r\'egulier engendr\'e par $\Bo_j$, pour $1\leq j \leq 4$, et  $\Sigma_{2}$ 
la subdivision de  $\Sigma_{1}$ obtenue  en rempla\c cant $\Delta_{4}$ en $\Sigma_{1}$ 
par les c\^ones   $\Delta_{4j}:=\langle \Bo_{4j}\rangle$, o\`u $\Bo_{4j}$ est la 
base ordonn\'ee de $\N$ obtenue en   rempla\c cant le $j$-i\`eme vecteur de $\Bo_{4}$ par $\sum_{u\in \Bo_{4}}u$.

On remarque que la suite de morphismes $X(\Sigma_2)\rightarrow X(\Sigma_1)\rightarrow \AF^{4}_{\KK}$ est obtenue en \'eclatant deux points ferm\'es. \\

Par une simple inspection des c\^ones des  \'eventails $\Sigma$ et $\Sigma_{2}$, 
on obtient la proposition suivante:

\begin{proposition}
 \label{pr:GDEN-deux-ex-A4} 

L'\'eventail $\Sigma$ engendr\'e par l'ensemble   $\{ \sigma_{j k}\mid 1\leq j\leq 3,1\leq k\leq 2\}\cup \{\sigma_4\}$ est une $G$-subdivision r\'eguli\`ere de l'\'eventail $\EN{\f}$. De plus, $\Sigma$ et $\Sigma_2$ sont les m\^emes \'eventails.
\end{proposition}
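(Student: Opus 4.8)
The plan is to recognize $\Sigma$ as one explicit star subdivision, from which both assertions drop out. Recall first --- by inspection of the Newton boundary $\Gamma(\f)$, see Figures \ref{fi:poly_newt_deux_fami_A4} and \ref{fi:EN_newt_deux_fami_A4} --- that $\EN{\f}$ is the star subdivision of the standard cone $\Delta$ at the single new ray $\langle\rho_0\rangle$ with $\rho_0=(2,2,2,1)$, so that its maximal cones are exactly $\sigma_1,\sigma_2,\sigma_3,\sigma_4$. The one geometric fact that makes everything work is that $\rho_1=u=(1,1,1,1)=\tfrac12(\rho_0+\e_4)$ lies in the relative interior of the $2$-dimensional face $\langle\rho_0,\e_4\rangle$, which is a common face of $\sigma_1,\sigma_2,\sigma_3$ but is not contained in $\sigma_4$ (every vector of $\sigma_4=\langle\rho_0,\e_1,\e_2,\e_3\rangle$ has first coordinate at least twice its fourth, which fails for $\rho_1$).

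First I would check that, for each $j\in\{1,2,3\}$, the two cones $\sigma_{j1}=\langle\rho_1,\rho_0,\e_i,\e_{i'}\rangle$ and $\sigma_{j2}=\langle\rho_1,\e_i,\e_{i'},\e_4\rangle$, with $\{i,i'\}=\{1,2,3\}\setminus\{j\}$, are precisely the maximal cones of the star subdivision of $\sigma_j$ at $\langle\rho_1\rangle$: the facets of $\sigma_j$ that do not contain $\rho_1$ are $\langle\rho_0,\e_i,\e_{i'}\rangle$ and $\langle\e_i,\e_{i'},\e_4\rangle$, and adjoining $\rho_1$ to each of them gives exactly $\sigma_{j1}$ and $\sigma_{j2}$. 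Since $\rho_1$ lies on a common face of $\sigma_1,\sigma_2,\sigma_3$ and outside $\sigma_4$, these three local star subdivisions are mutually compatible and, together with the untouched $\sigma_4$, exhibit $\Sigma$ as the star subdivision of $\EN{\f}$ at $\langle\rho_1\rangle$. In particular $\Sigma$ is genuinely a fan refining $\EN{\f}$, with $\sigma_j$ subdivided into $\{\sigma_{j1},\sigma_{j2}\}$ for $j=1,2,3$ and $\sigma_4$ left unchanged.

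Next I would verify the two defining properties of a $G$-regular subdivision. Regularity: a short lattice elimination (for instance $u-\e_2-\e_3-\e_4=\e_1$, $\rho_0-u=\e_1+\e_2+\e_3$, $u-\e_1-\e_2-\e_3=\e_4$) shows that $\sigma_{12}=\langle u,\e_2,\e_3,\e_4\rangle$ and $\sigma_{11}=\langle u,\rho_0,\e_2,\e_3\rangle$ both have index $1$; the permutations of $x_1,x_2,x_3$ fix $\rho_0,\rho_1,\e_4$ and permute $\{\sigma_1,\sigma_2,\sigma_3\}$, so all $\sigma_{jk}$ are regular, while $\sigma_4$ is already known to be regular. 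The $G$-condition: for $j\in\{1,2,3\}$ the extremal vectors occurring in $\{\sigma_{j1},\sigma_{j2}\}$ are $\rho_0,\e_i,\e_{i'},\e_4,\rho_1$, i.e. the four ray generators of $\sigma_j$ together with $\rho_1$. The ray generators are irreducible in the semigroup $\sigma_j\cap\N$, and $\rho_1=(1,1,1,1)$ is irreducible there too: in a decomposition $\rho_1=w+w'$ with $w,w'\in(\sigma_j\cap\N)\setminus\{0\}$ the first coordinates force (say) $w'_1=0$, after which the defining inequalities of $\sigma_j$ (for $j=1$ these read $v_1\ge 0$, $v_2\ge v_1$, $v_3\ge v_1$, $2v_4\ge v_1$) force $w'_2=w'_3=w'_4=0$, a contradiction. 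Conversely, since $\sigma_{j1}$ and $\sigma_{j2}$ are regular and cover $\sigma_j$, every element of $\sigma_j\cap\N$ is a non-negative integral combination of $\{\rho_0,\e_i,\e_{i'},\e_4,\rho_1\}$, so no other element of $\sigma_j\cap\N$ can be irreducible. Hence the irreducible elements of $\sigma_j\cap\N$ are exactly the extremal vectors of $\{\sigma_{j1},\sigma_{j2}\}$, which is therefore a $G$-regular subdivision of $\sigma_j$; for $\sigma_4$ (regular and not subdivided) this is trivial. Thus $\Sigma$ is a $G$-regular subdivision of $\EN{\f}$.

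Finally, for the equality $\Sigma=\Sigma_2$ I would just compare maximal cones. The fan $\Sigma_1$ has maximal cones $\Delta_j=\langle\Bo_j\rangle$, $1\le j\le 4$, and $\Sigma_2$ replaces $\Delta_4=\langle\e_1,\e_2,\e_3,u\rangle$ by the cones $\Delta_{4j}=\langle\Bo_{4j}\rangle$, where $\Bo_{4j}$ is obtained from $\Bo_4$ by substituting $\e_1+\e_2+\e_3+u=(2,2,2,1)=\rho_0$ for its $j$-th vector; thus $\Delta_{41}=\langle\rho_0,\e_2,\e_3,u\rangle$, $\Delta_{42}=\langle\e_1,\rho_0,\e_3,u\rangle$, $\Delta_{43}=\langle\e_1,\e_2,\rho_0,u\rangle$ and $\Delta_{44}=\langle\e_1,\e_2,\e_3,\rho_0\rangle$. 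Reading the definitions of the $\sigma_{jk}$, one finds $\sigma_{j2}=\Delta_j$ and $\sigma_{j1}=\Delta_{4j}$ for $j=1,2,3$, and $\sigma_4=\Delta_{44}$; hence $\Sigma$ and $\Sigma_2$ have the same maximal cones and coincide as fans. The only point that is not pure bookkeeping is the $G$-condition in the third paragraph --- namely that $\rho_1$ is irreducible in $\sigma_j\cap\N$ and that the regular covering $\sigma_j=\sigma_{j1}\cup\sigma_{j2}$ leaves no further Hilbert-basis elements --- and this is exactly what the elementary argument above settles.
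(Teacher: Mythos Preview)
Your proof is correct and carries out in full the ``simple inspection des c\^ones'' that the paper invokes without detail: you recognize $\Sigma$ as the star subdivision of $\EN{\f}$ at $\langle\rho_1\rangle$, verify regularity and the Hilbert-basis (i.e.\ $G$-) condition for each $\sigma_j$, and then match the seven maximal cones with those of $\Sigma_2$. This is exactly the intended verification; the only place to be slightly more explicit is the irreducibility of $\rho_1$ in $\sigma_j\cap\N$, where the defining inequalities should be applied to $w=\rho_1-w'$ (together with $w'\in\sigma_j$) to force $w'=0$, but the conclusion stands.
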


Soit $\pi: X(\Sigma)\rightarrow \AF^4_{\KK}$ le morphisme torique induit par l'\'eventail $\Sigma$ de la 
Proposition \ref{pr:GDEN-deux-ex-A4}. On remarque que  le morphisme   $\pi$ est une r\'esolution plong\'ee de $V$. Soit  $\widetilde{V}$ le transform\'e strict de $V$ dans $X(\Sigma)$. Par abus de notation, on note $\pi:\widetilde{V}\rightarrow V$ la restriction de $\pi$ \`a $\widetilde{V}$.

On note $\Sigma_0$ l'\'eventail engendr\'e par le c\^one $\Delta$ et $\Bl_i$ le diviseur exceptionnel de l'\'eclatement 
$X(\Sigma_i)\rightarrow X(\Sigma_{i-1})$, $1\leq i\leq 2$. Par abus de notation, on note $\Bl_1$ le transform\'e strict de 
$\Bl_1$ dans $X(\Sigma_2)$.  On pose $\E_i:=\Bl_i\cap \widetilde{V}$, $1\leq i\leq 2$.\\
 
Par abus de notation, on note $0$ l'origine de $\AF_{\KK}^4$. 
Pour d\'emontrer la proposition suivante,  on peut appliquer 
la m\^eme m\'ethode que celle utilis\'ee pour la d\'emonstration  de la Proposition  \ref{pr:des_prim_famil_hyp_A4}.

\begin{proposition}
\label{pr:des_deux_famil_hyp_A4}
Les $\E_i$ sont des vari\'et\'es lisses et irr\'eductibles et l'intersection $\E_1\cap\E_2$ est une vari\'et\'e lisse et irr\'eductible.
De plus, la  fibre exceptionnelle  $\pi^{-1}(0)$ de la d\'esingularisation 
$\pi:\widetilde{V}\rightarrow V$ est  la r\'eunion de $\E_1$ et $\E_2$. 
En particulier $\pi^{-1}(0)$ est  un diviseur \`a croisements normaux.

\end{proposition}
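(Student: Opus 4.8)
Le plan est de reprendre la méthode employée pour la Proposition~\ref{pr:des_prim_famil_hyp_A4}, en travaillant carte par carte sur la variété torique $X(\Sigma)$. D'abord on exploite le cadre global : comme $\Sigma$ est une subdivision régulière admissible de $\EN{\f}$ et que $V$ ne contient aucune $T$-orbite de dimension strictement positive, les Propositions~\ref{pr:cr_nor-f} et~\ref{pr:deux-pr-sing-iso} montrent que le morphisme torique $\pi : X(\Sigma)\rightarrow\AF^4_{\KK}$ est une résolution plongée de $V$. En particulier $X(\Sigma)$ est lisse, $\widetilde{V}$ est lisse, le transformé total de $V$ est un diviseur à croisements normaux, et $\widetilde{V}$ est transverse à chaque diviseur torique invariant. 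Comme les seuls vecteurs extrémaux exceptionnels de $\Sigma$ sont $\rho_0=(2,2,2,1)$ et $\rho_1=(1,1,1,1)$ (Proposition~\ref{pr:GDEN-deux-ex-A4}), on a $\pi^{-1}(0)=\widetilde{V}\cap(\D_{\rho_0}\cup\D_{\rho_1})=\E_2\cup\E_1$, avec $\E_1=\Bl_1\cap\widetilde{V}=\D_{\rho_1}\cap\widetilde{V}$ et $\E_2=\Bl_2\cap\widetilde{V}=\D_{\rho_0}\cap\widetilde{V}$. Puisque, dans la variété lisse $X(\Sigma)$, $\widetilde{V}$ coupe transversalement $\D_{\rho_0}$, $\D_{\rho_1}$ et $\D_{\rho_0}\cap\D_{\rho_1}$, les schémas $\E_1$, $\E_2$ et $\E_1\cap\E_2$ sont lisses et $\pi^{-1}(0)=\E_1\cup\E_2$ est à croisements normaux. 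Il ne reste donc plus qu'à établir l'irréductibilité de $\E_1$, $\E_2$ et $\E_1\cap\E_2$.

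Pour cela, on recouvre $X(\Sigma)$ par les cartes affines $U_\sigma$ associées aux cônes maximaux $\sigma\in\Sigma$ (les $\sigma_{jk}$, $1\leq j\leq 3$, $1\leq k\leq 2$, et $\sigma_4$, voir la Figure~\ref{fi:sub_sigma12_deux_fami_A4}), et pour chaque $\sigma$ rencontrant le lieu exceptionnel on écrit explicitement $\pi\mid_{U_\sigma}$ comme une application monomiale $(y_1,y_2,y_3,y_4)\mapsto(x_1,x_2,x_3,x_4)$ déterminée par les vecteurs extrémaux de $\sigma$ ; en la substituant dans $\f=\h_q(x_1,x_2)+\hk_q(x_3,x_4^2)$ et en mettant en facteur le monôme en les coordonnées exceptionnelles correspondant à la multiplicité de $V$ le long des $\D_{\rho_i}$, on obtient l'équation de $\widetilde{V}\cap U_\sigma$, puis celles de $\E_1\cap U_\sigma$ (trace de $y_{\rho_1}=0$), de $\E_2\cap U_\sigma$ (trace de $y_{\rho_0}=0$) et de leur intersection. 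Le point décisif — celui par lequel la situation diffère de la première famille, où les restrictions aux hyperplans exceptionnels $\D_j$, $1\leq j\leq p-1$, dégénéraient en l'équation $\h_q(y_1,1)=0$ et se scindaient en $q$ composantes — est que pour $\rho_0=(2,2,2,1)$ tous les sommets de $\Gamma_{+}(\f)$ réalisent l'infimum $\appui{\f}(\rho_0)=2q$, de sorte que la face associée à $\rho_0$ est la frontière de Newton $\Gamma(\f)$ tout entière, tandis que pour $\rho_1=(1,1,1,1)$ la face associée contient le monôme $x_3^q$, de coefficient non nul puisque $x_3$ ne divise pas $\hk_q(x_3,x_4^2)$ ; dans les deux cas l'équation restreinte conserve un terme provenant de la partie en $(x_3,x_4)$. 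En utilisant la non-dégénérescence de $\f$ par rapport à $\Gamma(\f)$, l'homogénéité et l'absence de facteur multiple de $\h_q$ et $\hk_q$, le fait que $x_1$ et $x_2$ ne divisent pas $\h_q$ (Remarque~\ref{re:hq_no_div}) et que $x_3$ et $x_4$ ne divisent pas $\hk_q(x_3,x_4^2)$, on vérifie dans chaque carte que l'équation restreinte est irréductible (elle définit même une variété lisse, d'après le paragraphe précédent) et que les variétés obtenues sont rationnelles ; on en tire l'irréductibilité de $\E_1\cap U_\sigma$, $\E_2\cap U_\sigma$ et $\E_1\cap\E_2\cap U_\sigma$.

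En procédant de même sur tous les cônes maximaux de $\Sigma$ rencontrant le lieu exceptionnel, on recolle ces ouverts irréductibles : comme l'étoile de $\rho_i$ dans $\Sigma$ est connexe, on conclut que $\E_i$, $i=1,2$, est irréductible, et de même $\E_1\cap\E_2$, ce qui achève la preuve. Le principal obstacle est le calcul effectif dans les cartes : pour chaque cône maximal il faut écrire correctement l'application monomiale, déterminer la puissance exacte des coordonnées exceptionnelles à factoriser, puis contrôler que l'équation résiduelle, restreinte aux différents hyperplans exceptionnels, demeure irréductible ; la nouveauté par rapport à la première famille est la substitution $x_4\mapsto x_4^2$ dans $\hk_q$, qui modifie l'arithmétique des exposants apparaissant dans ces équations et qu'il faut combiner avec l'hypothèse $q\geq 3$ pour exclure tout scindage du type rencontré sur les diviseurs $\D_j$. (Comme pour la Proposition~\ref{pr:des_prim_famil_hyp_A4}, cette proposition peut aussi s'obtenir par les méthodes de résolution de \cite{Oka87}.)
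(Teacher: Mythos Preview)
Ton plan suit exactement la démarche indiquée par l'article, qui renvoie simplement à la méthode de la Proposition~\ref{pr:des_prim_famil_hyp_A4} (et à \cite{Oka87}) sans donner de preuve détaillée ; ta description carte par carte et ton analyse des faces de $\Gamma_{+}(\f)$ associées à $\rho_0$ et $\rho_1$ sont correctes et plus explicites que ce que l'article fournit. Une petite coquille à corriger : c'est le fait que $x_4$ (et non $x_3$) ne divise pas $\hk_q(x_3,x_4^2)$ qui garantit que le coefficient de $x_3^q$ est non nul.
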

\begin{remarque}
 Dans les r\'esultats suivants, on montre que les diviseurs essentiels sur $V$ sont exactement  les diviseurs exceptionnels de la d\'esingularisation $\pi:\widetilde{V}\rightarrow V$ 
\end{remarque}

\subsubsection{\bf{Preuve de la bijectivit\'e de l'application de Nash}}
Dans cette section, on d\'emontre le Th\'eor\`eme \ref{th:deux-the-hyp-A4} sur la  bijectivit\'e de
l'application de Nash pour  l'hypersurface $V$ de $\AF_{\KK}^4$ donn\'ee par une \'equation du type 
$\f(x_1,x_2,x_3,x_4):=\h_q(x_1,x_2)+\hk_{q}(x_3,x_4^2)$, ce qui \'equivaut \`a montrer que tous les 
wedges admissibles se rel\`event \`a une d\'esingularisation de $V$. 
Notre but, dans toute la suite de cette section, est de montrer que pour chaque diviseur essentiel $\E$  tous les $K$-wedges admissibles centr\'es en 
$\N(\E)$  se rel\`event \`a la d\'esingularisation  $\widetilde{V}$ de 
la Proposition \ref{pr:des_deux_famil_hyp_A4}, o\`u $K$ est une extension du corps  $\KK$.\\

D'abord, on donne quelques r\'esultats techniques. Ensuite, on d\'emontre le Th\'eor\`eme \ref{th:deux-the-hyp-A4}.\\

On note $0$ (resp $g$) le point ferm\'e (resp. g\'en\'erique) de $\spec K[[t]]$.
\'Etant donn\'e un $K$-arc  $\alpha:\spec K[[t]]\rightarrow V$, $\alpha(0)=0$, 
qui n'est pas concentr\'e en un hyperplan $x_i=0$, $1\leq i\leq 4$,  on note  $\mu=(\mu_1,\mu_2,\mu_3,\mu_4)\in\ZZ_{> 0}^{4}$ le {\it vecteur principal} du $K$-arc $\alpha$, c'est-\`a-dire 

\begin{center}
$\mu:=(\ord_t\alpha^{\star }(x_1), \ord_t\alpha^{\star }(x_2),\ord_t\alpha^{\star }(x_3),\ord_t\alpha^{\star }(x_4))$.
\end{center}
o\`u  $\com{\alpha}$ est le comorphisme du $K$-arc $\alpha$. On peut donc \'ecrire le comorphisme $\com{\alpha}$ de la fa\c con suivante:
\begin{center}
$\com{\alpha}(x_i)=t^{\mu_{i}}\alpha_i$, $1\leq i\leq 4$,
\end{center}
o\`u  les $\alpha_i$ sont des s\'eries formelles  inversibles dans $K[[t]]$.\\

Dans les r\'esultats suivants, on utilise la notation  ci-dessus et celle de la Proposition \ref{pr:des_deux_famil_hyp_A4}.\\

On remarque que si un $K$-arc $\alpha:\spec K[[t]]\rightarrow V$ n'est pas concentr\'e en $0\in V$, alors $\alpha$ 
se rel\`eve \`a $\widetilde{V}$, car le morphisme $\pi$ est une d\'esingularisation de $V$. De plus, si le point 
$\widetilde{\alpha}(0)$  est le point g\'en\'erique d'une composante irr\'eductible de la fibre exceptionnelle de 
$\pi$, alors le $K$-arc $\alpha$ n'est pas concentr\'e en une hypersurface de $V$.\\

Pour d\'emontrer la proposition suivante,  on peut appliquer 
la m\^eme m\'ethode que celle de la d\'emonstration  de la Proposition  
\ref{pr:vect-prin}.

\begin{proposition} 
\label{pr:vect-prin-deux} Soient  $\pi:\widetilde{V}\rightarrow V$ la d\'esingularisation de la Proposition \ref{pr:des_deux_famil_hyp_A4}, $\E$  une composante irr\'eductible de la fibre exceptionnelle de $\pi$ et $\alpha$  un $K$-arc qui n'est pas concentr\'e en $0$   tel que  le rel\`evement $\widehat{\alpha}$ de $\alpha$ \`a $\widetilde{V}$ est transverse \`a $\E$. De plus, on suppose que   $\widehat{\alpha}(0)$ est le point g\'en\'erique de $\E$. Alors,  si  $\E$ est le diviseur $\E_1$ (resp. $\E_{2}$), alors le vecteur principal $\mu$ du $K$-arc $\alpha$ est le vecteur $\rho_1=(1,1,1,1)$ (resp. $\rho_0=(2,2,2,1)$).

\end{proposition}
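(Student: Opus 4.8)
Comme annoncé juste avant l'énoncé, le plan est de reprendre la méthode de la démonstration de la Proposition~\ref{pr:vect-prin}, en ne détaillant que les endroits où intervient la combinatoire de la présente famille. D'après les Propositions~\ref{pr:cr_nor-f} et~\ref{pr:deux-pr-sing-iso}, le morphisme torique $\pi:X(\Sigma)\rightarrow\AF^4_{\KK}$ est une résolution plongée de $V$; en particulier $\widetilde{V}$ est transverse à chaque diviseur torique de $X(\Sigma)$ et rencontre proprement leurs intersections, et $\pi^{-1}(0)=\E_1\cup\E_2$ avec $\E_1=\Bl_1\cap\widetilde{V}$, $\E_2=\Bl_2\cap\widetilde{V}$ (Proposition~\ref{pr:des_deux_famil_hyp_A4}). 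Par construction de $\Sigma$ (Proposition~\ref{pr:GDEN-deux-ex-A4}), $\Bl_1$ est le diviseur torique $\D_{\rho_1}$ associé au centre $\rho_1=(1,1,1,1)$ de la première subdivision élémentaire, et $\Bl_2$ est le diviseur torique $\D_{\rho_0}$ associé au centre $\rho_0=(2,2,2,1)$ de la seconde. Ainsi le diviseur $\E$ de l'énoncé s'écrit $\E=\D_{\varrho_1}\cap\widetilde{V}$ pour un unique vecteur extrémal $\varrho_1$ de $\Sigma$, avec $\varrho_1=\rho_1$ si $\E=\E_1$ et $\varrho_1=\rho_0$ si $\E=\E_2$; il suffit donc de prouver que $\mu=\varrho_1$.

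D'abord je choisirais un cône maximal adapté. Comme $\widetilde{V}$ rencontre transversalement la frontière torique de $X(\Sigma)$ et que $\E=\D_{\varrho_1}\cap\widetilde{V}$ est de codimension un dans $\widetilde{V}$, le point générique $\widehat{\alpha}(0)$ de $\E$ n'appartient à aucun diviseur torique de $X(\Sigma)$ autre que $\D_{\varrho_1}$, autrement dit il appartient à l'orbite torique de codimension un associée au rayon $\RR_{\geq 0}\varrho_1$ et à aucune orbite plus petite. Soit alors $\sigma$ un cône maximal de $\Sigma$ tel que $\widehat{\alpha}(0)\in U_\sigma$: nécessairement $\RR_{\geq 0}\varrho_1$ est une face de $\sigma$, et ses trois autres vecteurs extrémaux $\varrho_2,\varrho_3,\varrho_4$ — que l'on lit explicitement sur la liste $\{\sigma_{jk}\}\cup\{\sigma_4\}$ de la Proposition~\ref{pr:GDEN-deux-ex-A4} — vérifient $\widehat{\alpha}(0)\notin\D_{\varrho_i}$ pour $i=2,3,4$. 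Soient enfin $m_1,\dots,m_4\in\ZZ^4$ la base duale de $\varrho_1,\dots,\varrho_4$, de sorte que $\langle m_i,\varrho_j\rangle=\delta_{ij}$ et que le caractère $\chi^{m_i}$ soit une équation du diviseur $\D_{\varrho_i}$ sur l'ouvert torique $U_\sigma\cong\AF^4_{\KK}$.

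Ensuite je calculerais les ordres le long de $\widehat{\alpha}$. Écrivons $\com{\alpha}(x_i)=t^{\mu_i}\alpha_i$ avec $\alpha_i$ inversible dans $K[[t]]$, ce qui a un sens puisque $\alpha$ n'est concentré en aucun hyperplan de coordonnées. Pour chaque monôme de Laurent on a $\ord_t\com{\alpha}(\chi^{m_i})=\langle m_i,\mu\rangle$, donc, comme $\pi\circ\widehat{\alpha}=\alpha$, également $\ord_t\com{\widehat{\alpha}}(\chi^{m_i})=\langle m_i,\mu\rangle$. La transversalité de $\widehat{\alpha}$ à $\E$, donc à $\D_{\varrho_1}$ (car $\widetilde{V}$ est transverse à $\D_{\varrho_1}$ et $\chi^{m_1}$ découpe $\E$ dans $\widetilde{V}$ avec multiplicité un au voisinage de $\widehat{\alpha}(0)$), donne $\langle m_1,\mu\rangle=1$, tandis que $\widehat{\alpha}(0)\notin\D_{\varrho_i}$ donne $\langle m_i,\mu\rangle=0$ pour $i=2,3,4$. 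On a donc $\langle m_i,\mu\rangle=\delta_{i1}$ pour tout $i$; comme $\{m_i\}$ et $\{\varrho_j\}$ sont des bases duales, ceci force $\mu=\varrho_1$, et il ne reste qu'à lire $\varrho_1=\rho_1=(1,1,1,1)$ lorsque $\E=\E_1$ et $\varrho_1=\rho_0=(2,2,2,1)$ lorsque $\E=\E_2$. Comme dans la Proposition~\ref{pr:vect-prin}, le seul point délicat sera la justification du choix de $\sigma$ — c'est-à-dire que l'on peut compléter le rayon $\RR_{\geq 0}\varrho_1$ en un cône maximal de $\Sigma$ dont les trois autres rayons évitent le point générique de $\E$ — qui repose sur la description explicite de l'éventail $\Sigma$ jointe à la transversalité de $\widetilde{V}$ le long de $\D_{\varrho_1}$; tout le reste est le même calcul d'algèbre linéaire sur une seule carte torique que dans la première famille.
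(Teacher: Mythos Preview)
Your proposal is correct and follows exactly the approach the paper intends: the paper gives no separate proof for Proposition~\ref{pr:vect-prin-deux} but simply refers back to the method of Proposition~\ref{pr:vect-prin}, and your argument reproduces that method step for step (choice of the toric divisor $\D_{\varrho_1}$ with $\E=\D_{\varrho_1}\cap\widetilde{V}$, choice of a maximal cone $\sigma$ containing the ray of $\varrho_1$ with $\widehat{\alpha}(0)\in U_\sigma$, dual basis $m_i$, transversality giving $\langle m_1,\mu\rangle=1$ and genericity giving $\langle m_i,\mu\rangle=0$ for $i\geq 2$, hence $\mu=\varrho_1$). Your identification of $\varrho_1$ with $\rho_1$ or $\rho_0$ via the description of $\Sigma=\Sigma_2$ in Proposition~\ref{pr:GDEN-deux-ex-A4} is precisely the ``En utilisant explicitement la r\'esolution $\pi$'' step of the original proof.
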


\'Etant fix\'e une composante irr\'eductible $\E$ de la fibre exceptionnelle du morphisme $\pi:\widetilde{V}\rightarrow V$ ($\pi$ est la d\'esingularisation de la Proposition \ref{pr:des_deux_famil_hyp_A4}), on consid\`ere un $K$-wedge $\omega:\spec K[[s,t]]\rightarrow V$,  tel que son centre (resp. son arc g\'en\'erique)  est le point g\'en\'erique de 
$\N(\E)$, not\'e $\alpha_{\E}$, (resp. est un point qui appartient \`a $V_{\infty}^{s}$), et on pose \\

\begin{center}
$(\eta_1,\eta_2,\eta_3,\eta_4):=(\ord_t\com{\omega}(x_1),\ord_t\com{\omega}(x_2),\ord_t\com{\omega}(x_3),\ord_t\com{\omega}(x_{4}))\in \ZZ^{4}_{>0}$, 
\end{center}
o\`u $\com{\omega}$ est le comorphisme du $K$-wedge $\omega$.
On peut donc \'ecrire le comorphisme $\com{\omega}$ de la fa\c con suivante:
\begin{center}
$\com{\omega}(x_i)=t^{\eta_{i}}\varphi_i$, $1\leq i\leq 4$,
\end{center}
o\`u  les  $\varphi_i$ sont des s\'eries formelles  dans $K[[s,t]]$ qui ne sont pas divisibles par $t$.\\

On pose 
\begin{center}
 $(\mu_1,\mu_2,\mu_3,\mu_4):=(\ord_t\com{\alpha_{\E}}(x_1),\ord_t\com{\alpha_{\E}}(x_2),\ord_t\com{\alpha_{\E}}(x_3),\ord_t\com{\alpha_{\E}}(x_{4}))$,
\end{center}
le vecteur principal du $\KK_{\alpha_{\E}}$-arc $\alpha_{\E}$, o\`u $\KK_{\alpha_{\E}}$ est le corps r\'esiduel du point g\'en\'erique de
$\N(\E)$.\\

 D'apr\`es la Proposition \ref{pr:vect-prin-deux}, si $\E$ est le diviseur $\E_1$ (resp. $\E_2$), alors le vecteur 
principal du $\KK_{\alpha_{\E}}$-arc $\alpha_{\E}$ est le vecteur $(\mu_1,\mu_2,\mu_3,\mu_4)=(1,1,1,1)$ 
(resp. $(\mu_1,\mu_2,\mu_3,\mu_4)=(2,2,2,1)$). On remarque que  $1\leq \eta_i\leq \mu_i\leq 2$, 
pour tout $1\leq i\leq 4$, en particulier $\eta_4=\mu_4=1$.\\

Soient  $\phi\in K[[s,t]]$  une s\'erie formelle non nulle et $v\in \RR^{2}_{>0}$. 
Le vecteur $v$ induit une graduation positive sur l'anneau $K[[s,t]]$, 
on note  $\nu_v\phi$ (resp.  $\phi_v$)  le $v$-ordre (resp. la $v$-partie principale) de $\phi$ .\\

 Le lemme suivant est un r\'esultat technique qu'on utilise dans la preuve du Th\'eor\`eme \ref{th:deux-the-hyp-A4}.  
 
\begin{lemme}
\label{le:v-ord-hq_h'q}
 Si le $K$-wedge $\omega$ est centr\'e en $\N(\E_2)$, alors il existe un vecteur $v=(u,1)\in \QQ^{2}_{>0}$ tel que

\begin{center}
 $\nu_v\h_q(t^{\eta_1}\varphi_1,t^{\eta_2}\varphi_2)=\degt{\h_q(t^{\eta_1}(\varphi_1)_v,t^{\eta_2}(\varphi_2)_v)}=2q$,
\vspace{0.5cm}

$\nu_v\hk_q(t^{\eta_3}\varphi_3,t^{2}\varphi_4^2)=\degt{\hk_q(t^{\eta_3}(\varphi_3)_v,t^{2}(\varphi_4^2)_v)}=2q$.
\end{center}

\end{lemme}
\begin{proof}
On rappelle que  $\alpha_{\E_2}$ est le  point g\'en\'erique de $\N(\E)$.

 Soit $\lambda:\spec K[[t]]\rightarrow \spec K[[s,t]]$ le morphisme induit par
 l'homomorphisme canonique $\lambda^{\star}:K[[s,t]]\rightarrow K[[s,t]]\slash (s)=K[[t]]$. 
 Alors, le morphisme $\alpha=\omega\circ \lambda$ est un $K$-arc sur $V$.
 On remarque que le $K$-arc $\alpha$ est un $K$-point de $V_{\infty}$ au-dessus du point  $\alpha_{\E_2}$.

En utilisant la propri\'et\'e fonctorielle de l'espace d'arcs $V_{\infty}$,
on obtient que  le rel\`evement $\widehat{\alpha}$ de $\alpha$ \`a $\widetilde{V}$ est transverse au diviseur  $\E_2$ et  que   $\widehat{\alpha}(0)$ est le point g\'en\'erique de $\E_2$, car le centre du $K$-wedge $\omega$ est le point g\'en\'erique de 
$\N(\E_2)$.\\

D'apr\`es la Proposition \ref{pr:vect-prin-deux}, on peut \'ecrire le comorphisme $\com{\alpha}$ de $\alpha$ de la fa\c con suivante:
\begin{center}
$\com{\alpha}(x_i)=t^{\mu_{i}}\alpha_i$, $1\leq i\leq 4$, $(\mu_1,\mu_2,\mu_3,\mu_4)=(2,2,2,1)$,
\end{center}
o\`u  les $\alpha_i$ sont des s\'eries formelles  inversibles dans $K[[t]]$. On remarque qu'on a:

\begin{center}
 $t^{\mu_i-\eta_i}\alpha_i=\lambda^{\star}\circ \varphi_i$, $1\leq i\leq 3$, et $\alpha_4=\lambda^{\star}\circ \varphi_4$.
\end{center}

 Soit $a_i\in K^{\star}$ le terme constant de la s\'erie formelle $\alpha_i$. Alors, il existe un vecteur $v=(u,1)\in \QQ^2_{>0}$ tel que 
\begin{center}
 $(\varphi_i)_v=a_it^{\mu_i-\eta_i}$, $1\leq i\leq 3$ et $(\varphi_4)_v=a_4$
\end{center}
En effet, il suffit de choisir $v\in \QQ^{2}_{>0}$ tel que le nombre  $u$ soit ``assez grand".\\

On rappelle que $\sigma_{11}\in \Sigma$ est le c\^one  engendr\'e par les vecteurs $\rho_0=(2,2,2,1)$, $\rho_1=(1,1,1,1)$, $\e_2=(0,1,0,0)$ et $\e_3=(0,0,1,0)$ et que la restriction du morphisme $\pi:X(\Sigma)\rightarrow \AF^4_{\KK}$ \`a l'ouvert $U:=U_{\sigma_{11}}$ est donn\'ee de la fa\c con suivante:

\begin{center}
 $\pi:U\rightarrow \AF^{4}_{\KK}$, $(y_1,y_2,y_3,y_4)\mapsto (y_1^2y_4,y_1^2y_2y_4,y_1^2y_3y_4,y_1y_4)$
\end{center}
 
 L'intersection de $\widetilde{V}$ (resp. $\E_2$) et $U$  est donn\'ee par l'\'equation suivante (resp. les \'equations suivantes):
\begin{center}
 $\h_q(1,y_2)+\hk_{q}(y_3,y_4)=0$  (resp. $\h_q(1,y_2)+\hk_{q}(y_3,y_4)=0$  et $y_1=0$)
\end{center}

Comme le $K$-arc $\alpha$ n'est pas concentr\'e en un hyperplan $x_i=0$, $1\leq i\leq 4$, et $\widehat{\alpha}(0)\in U\cap \widetilde{V}$, $\widehat{\alpha}: \spec K[[t]]\rightarrow U\cap \widetilde{V}$ est un morphisme bien d\'efini. On peut donc \'ecrire le comorphisme $\com{\widehat{\alpha}}$ de la fa\c con suivante:
\begin{center}
$\com{\widehat{\alpha}}(y_i)=t^{r_{i}}\widehat{\alpha}_i$, $1\leq i\leq 4$,
\end{center}
o\`u  les  $\widehat{\alpha}_i$ sont des s\'eries formelles  inversibles dans $K[[t]]$ et les  $r_i$ sont positifs, pour tout $1\leq i\leq 4$.\\

Soit $\widehat{a}_i \in K^{\star}$ le terme constant  de $\widehat{\alpha}_i$, $1\leq i\leq 4$. On rappelle que le $K$-arc $\widehat{\alpha}$ est transverse au diviseur $\E_2$  et  que $\widehat{\alpha}(0)$ est le point g\'en\'erique de $\E_2$.  Par cons\'equent, on obtient que $(r_1,r_2,r_3,r_4)=(1,0,0,0)$, car le $K$-arc $\widehat{\alpha}$ est transverse au diviseur $\E_2$,   et  que $\h_{q}(1,\widehat{a}_2)\neq 0$ et $\hk_{q}(\widehat{a}_3,\widehat{a}^2_4)\neq 0$, car le point $(0,\widehat{a}_2,\widehat{a}_3,\widehat{a}_4)$ est le point g\'en\'erique de $\E_2$.\\

En utilisant le morphisme $\pi$, on obtient que:
 \begin{center}
$\begin{array}{ccccccc}
\alpha_1 & = &\widehat{\alpha}_1^2\widehat{\alpha}_4,& \; & \alpha_2& =&\widehat{\alpha}_1^2\widehat{\alpha}_2\widehat{\alpha}_4,\\

\alpha_3 & = & \widehat{\alpha}_1^2\widehat{\alpha}_3\widehat{\alpha}_4,&\;& \alpha_4& =&\widehat{\alpha}_1\widehat{\alpha}_4.
\end{array}$
\end{center}
Par cons\'equent, on a:
\begin{center}
$\h_{q}(a_1,a_2)=(\widehat{a}_1^2\widehat{a_4})^q\h_{q}(1,\widehat{a}_2)\neq 0$,
\vspace{0.5cm}

$\hk_{q}(a_3,a^2_4)=(\widehat{a}_1^2\widehat{a}_4)^q\hk_{q}(\widehat{a}_3,\widehat{a}^2_4)\neq 0$.
\end{center}

Comme  il existe un vecteur $v=(u,1)\in \QQ^2_{>0}$ tel que 
 $(\varphi_i)_v=a_it^{\mu_i-\eta_i}$, $1\leq i\leq 3$ et $(\varphi_4)_v=a_4$, on obtient que
\begin{center}
 $(\h_q(t^{\eta_1}\varphi_1,t^{\eta_2}\varphi_2))_v=\h_q(t^{\eta_1}(\varphi_1)_v,t^{\eta_2}(\varphi_2)_v)=t^{2q}\h_{q}(a_1,a_2)$,
\vspace{0.5cm}

$(\hk_q(t^{\eta_3}\varphi_3,t^{2}\varphi_4^2))_v=\hk_q(t^{\eta_3}(\varphi_3)_v,t^{2}(\varphi_4^2)_v)=t^{2q}\hk_{q}(a_3,a^2_4)$.
\end{center}
Par cons\'equent, on a.
\begin{center}
 $\nu_v\h_q(t^{\eta_1}\varphi_1,t^{\eta_2}\varphi_2)=\degt{\h_q(t^{\eta_1}(\varphi_1)_v,t^{\eta_2}(\varphi_2)_v)}=2q$,
\vspace{0.5cm}

$\nu_v\hk_q(t^{\eta_3}\varphi_3,t^{2}\varphi_4^2)=\degt{\hk_q(t^{\eta_3}(\varphi_3)_v,t^{2}(\varphi_4^2)_v)}=2q$.
\end{center}
Ce qui ach\`eve la preuve du lemme. \end{proof}

La proposition suivante est le r\'esultat cl\'e de la preuve du Th\'eor\`eme \ref{th:deux-the-hyp-A4}
\begin{proposition}
\label{pr:ser_for_inv_deux_cas_A4}
 Les s\'eries formelles $\varphi_i$, $1\leq i\leq 4$,  sont inversibles.
\end{proposition}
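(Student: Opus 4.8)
The plan is to follow the scheme of the proof of Proposition~\ref{pr:ser_for_inv_prem_cas_A4}, adapted to the extra asymmetry coming from the variable $x_4$ occurring with exponent $2$ (equivalently, from $\rho_0=(2,2,2,1)$ not being of the shape $(\ast,\ast,1,1)$). I would first dispose of the case $\E=\E_1$: by Proposition~\ref{pr:vect-prin-deux} the principal vector of $\alpha_\E$ is $\rho_1=(1,1,1,1)$, so $1\le\eta_i\le\mu_i=1$ forces $\mu_i-\eta_i=0$ for every $i$; since, exactly as in the first family, one has $\FI(\varphi_i)\le\mu_i-\eta_i$ and $\varphi_i$ is invertible if and only if $\mu_i-\eta_i=0$, all $\varphi_i$ are invertible. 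So from now on $\E=\E_2$ and $\mu=(2,2,2,1)$.

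For $\E=\E_2$, from $\eta_4=\mu_4=1$ one gets at once that $\varphi_4$ is a unit, and as above it suffices to prove $\eta_1=\eta_2=\eta_3=2$: then each $\varphi_i|_{s=0}$ is a unit of $K[[t]]$, so $\varphi_i$ has nonzero constant term and is a unit of $K[[s,t]]$. I would factor $\h_q(x_1,x_2)=\prod_{i=1}^{q}(a_ix_1+b_ix_2)$ and $\hk_q(y,z)=\prod_{k=1}^{q}(c_ky+d_kz)$; by Remark~\ref{re:hq_no_div} and the hypothesis that $x_3,x_4$ do not divide $\hk_q(x_3,x_4^2)$, all the $a_i,b_i,c_k,d_k$ are nonzero, and the linear forms in each product are pairwise non-proportional since $\h_q,\hk_q$ are square-free. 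Substituting $\com\omega(x_i)=t^{\eta_i}\varphi_i$ into $\f=0$ gives
\[
\prod_{i=1}^{q}\bigl(a_it^{\eta_1}\varphi_1+b_it^{\eta_2}\varphi_2\bigr)=-\prod_{k=1}^{q}\bigl(c_kt^{\eta_3}\varphi_3+d_kt^{2}\varphi_4^{2}\bigr).
\]
Moreover $\varphi_i|_{s=0}=a_i't^{\,\mu_i-\eta_i}(1+\cdots)$ with $a_i'\neq0$, and Lemma~\ref{le:v-ord-hq_h'q} — concretely the identities $\h_q(a_1',a_2')\neq0$ and $\hk_q\bigl(a_3',(a_4')^{2}\bigr)\neq0$ extracted from its proof via the explicit toric chart around $\E_2$ — shows that, after setting $s=0$, each side of the displayed identity has $t$-order exactly $2q$.

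Next I would compare $t$-orders: factoring $t^{\min(\eta_1,\eta_2)}$ out of each factor on the left and $t^{\eta_3}$ out of each factor on the right (using $\eta_3\le2$ and that $\varphi_3,\varphi_4$ are not divisible by $t$), the $t$-order of each side is computed in terms of $(\eta_1,\eta_2,\eta_3)$ up to a correction of $0$ or $1$, and combining this with the fact that both sides become of $t$-order $2q$ after $s\mapsto0$ excludes all patterns $(\eta_1,\eta_2,\eta_3)$ except a short explicit list (for instance $(1,2,2)$ and $(2,2,1)$ drop out immediately). For each remaining ``bad'' pattern I would cancel the common power of $t$ and be left with an identity $\prod_iP_i=-\prod_kQ_k$ in $K[[s,t]]$ in which the $P_i$ are pairwise coprime, the $Q_k$ are pairwise coprime (here one uses that $\varphi_4$ is a unit, so $\gcd(Q_k,Q_{k'})$ divides $\gcd(\varphi_3,t)=1$), and — thanks precisely to $\h_q(a_1',a_2')\neq0$ and $\hk_q(a_3',(a_4')^{2})\neq0$ — each $P_i$ and each $Q_k$ is a non-unit of order one whose linear part has nonzero $t$-coefficient, hence is irreducible in $K[[s,t]]$. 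By unique factorization the two families of factors coincide up to units and reordering, which forces an equality of ideals of the type $(\varphi_1,\varphi_2)=(\varphi_3,t)$; confronted with the order data of the $\varphi_i$ and with $q\ge3$, such an equality is incompatible with any $\varphi_i$ failing to be a unit, so $\eta_1=\eta_2=\eta_3=2$ and the proof is complete.

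The hard part will be exactly this final case analysis. In the first family the analogous step was packaged cleanly as ``$\hk_{pq}(\varphi_3,\varphi_4)$ is invertible'', because there $\mu_3=\mu_4=1$ made $\varphi_3$ and $\varphi_4$ units for free; here $\mu_3=2$, so $\varphi_3$ is not a priori a unit and that argument cannot be repeated verbatim. This is why Lemma~\ref{le:v-ord-hq_h'q} is phrased through the $v$-grading with $v=(u,1)$, $u\gg0$: it is the substitute that, using the explicit chart around $\E_2$, pins down the two non-vanishings that make the $P_i$ and $Q_k$ irreducible and hence the unique-factorization argument effective.
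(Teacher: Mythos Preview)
Your outline follows the paper's approach closely through the unique-factorization step: the treatment of $\E=\E_1$, the observation $\eta_4=\mu_4=1$ for $\E=\E_2$, the case analysis on $(\eta_1,\eta_2,\eta_3)$, and the use of Lemma~\ref{le:v-ord-hq_h'q} to make the factors irreducible are all exactly what the paper does (your irreducibility claim is Lemma~\ref{le:gamma-gamma'_irred}). Case~$(\eta_1,\eta_2,\eta_3)=(1,1,2)$ is also fine with your method: there the right-hand side is $t^q$ times a unit, each $P_i$ has $v$-order $1$, so each $P_i$ is a unit times $t$, forcing $\varphi_1,\varphi_2\in(t)$.

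The gap is in your final paragraph, in the case $\eta_1=\eta_3=1$. Once unique factorization gives $P_i=I_iQ_{\sigma(i)}$ for units $I_i$, what you actually have is a system
\[
b_i\varphi_1+c_it^{\eta_2-1}\varphi_2-d_{\sigma(i)}I_i\,\varphi_3=f_{\sigma(i)}I_i\,t\varphi_4^{2},\qquad 1\le i\le q,
\]
and the desired contradiction ($\varphi_1,\varphi_3\in(t)$) follows only if some $3\times 3$ minor of the coefficient matrix is a \emph{unit} in $K[[s,t]]$. This is not automatic: the third column involves the units $I_i$, whose constant terms $j_i\in K$ depend on the wedge, and nothing you have written forces the relevant determinant to have nonzero constant term. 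The ideal equality $(\varphi_1,t^{\eta_2-1}\varphi_2)=(\varphi_3,t)$ that any two relations yield is not by itself contradictory, and invoking ``$q\ge3$'' together with ``order data'' does not close the argument.

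In the paper this is precisely Lemma~\ref{le:rgM}, and its proof requires a \emph{second} appeal to the genericity of the center, beyond what is packaged in Lemma~\ref{le:v-ord-hq_h'q}. Assuming every $3\times3$ minor is a non-unit, one combines the square-freeness of $\h_q$ and $\hk_q$ with the relation $\prod_iI_i=-1$ to deduce $j_1\in\KK^{\star}$; then specializing one of the relations at $s=0$ and passing to the toric chart produces a polynomial relation with coefficients in $\KK$ satisfied by $\widehat{\alpha}(0)$, contradicting that $\widehat{\alpha}(0)$ is the generic point of $\E_2$. Your sketch does not identify this step; you should either reproduce this determinant/genericity argument or give a concrete alternative mechanism that forces the contradiction.
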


D'abord, finissons la preuve du Th\'eor\`eme \ref{th:deux-the-hyp-A4}. 
Comme les s\'eries formelles $\varphi_i$, $1\leq i\leq 4$,  sont inversibles, le $K$-wedge $\omega$ se rel\`eve \`a $\widetilde{V}$. 
On remarque que  la Proposition \ref{pr:ser_for_inv_deux_cas_A4}  est valable pour toute composante irr\'eductible 
$\E$ de la fibre exceptionnelle de la d\'esingularisation $\pi:\widetilde{V}\rightarrow V$  et pour tout 
$K$-wedge $\omega:\spec K[[s,t]]\rightarrow V$,  tel que son centre (resp. son arc g\'en\'erique)  
est le point g\'en\'erique de $\N(\E)$ (resp. est un point qui appartient \`a $V_{\infty}^{s}$). 
Par cons\'equent  on obtient que  tout diviseur exceptionnel $\E$ de la 
d\'esingularisation $\pi:\widetilde{V}\rightarrow V$ est  un  diviseur essentiel sur $V$.

\begin{proof}[{\it D\'emonstration de la Proposition \ref{pr:ser_for_inv_deux_cas_A4}}]

On rappelle que

\begin{center}
 $\FI(\varphi_i)\leq \mu_i-\eta_i$, $1\leq i\leq 4$,
\end{center}
o\`u $\FI(\varphi_i)$ est le nombre de facteurs irr\'eductibles de $\varphi_i$ compt\'es avec multiplicit\'e. De plus,  $\varphi_i$ $1\leq i\leq 4$  est inversible si et seulement si $\mu_i-\eta_i=0$.\\

Si $\E$ est le diviseur $\E_1$, alors le vecteur 
principal du $\KK_{\alpha_{\E}}$-arc $\alpha_{\E}$ est le vecteur $(\mu_1,\mu_2,\mu_3,\mu_4)=(1,1,1,1)$. Par cons\'equent, $\FI(\varphi_i)=0$ (c'est-\`a-dire $\varphi_i$ est inversible)  pour tout  $1\leq i\leq 4$.

Alors on peut supposer que $\E$ est le diviseur $\E_2$. Ainsi on obtient que   $(\mu_1,\mu_2,\mu_3,\mu_4)=(2,2,2,1)$.\\

 Comme   $\eta_4=\mu_4=1$,  la s\'erie  formelle  $\varphi_4$ est inversible.\\

Raisonnons par l'absurde. On suppose que les s\'eries formelles $\varphi_1$, $\varphi_2$ et $\varphi_3$ 
ne sont pas simultan\'ement inversibles. Par cons\'equent, il existe un entier $i_0\in\{1,2,3\}$ tel que $\mu_{i_0}-\eta_{i_0}\neq 0$.\\

Le  $3$-squelette $S_3\EN{\f}$ de l'\'eventail de Newton $\EN{\f}$ est la r\'eunion des  c\^ones de $\EN{\f}$
de dimension inf\'erieure ou \'egale \`a trois 
 (voir la figure \ref{fi:EN_newt_deux_fami_A4}) et on d\'enote $\Delta^0$ l'int\'erieur du c\^one standard. 
Un vecteur $v:=(v_1,v_2,v_3,v_4)\in \Delta^0$ d\'efinit naturellement  une graduation strictement positive sur
l'anneau  de polyn\^omes $\KK[x_1,x_2,x_3,x_3]$. Soit $\f_v$ le terme principal du polyn\^ome 
$\f(x_1,x_2,x_3,x_4)=\h_q(x_1,x_2)+\hk_q(x_3,x_4^2)$ obtenu au moyen de cette graduation.
Par d\'efinition de l'\'eventail de Newton $\EN{\f}$, le vecteur  $v$ appartient \`a  $S_3\EN{\f}\cap \Delta^0$
si et seulement si $\f_v$ n'est pas un mon\^ome (voir la Section \ref{ssec:torique}).  Soit $\beta$ l'arc g\'en\'erique du $K$-wedge $\omega$. 
Comme $\beta^{\star}\f=0$ (on rappelle que $\beta^{\star}$ est le comorphisme 
de $\beta$), on obtient que $\f_{\eta}$, $\eta:=(\eta_1,\eta_2,\eta_3,1)$, n'est pas un m\^onome. Ce qui implique que $(\eta_1,\eta_2,\eta_3,1)\in
S_3\EN{\f}\cap \Delta^0$.

Comme $(\eta_1,\eta_2,\eta_3,1)\in
S_3\EN{\f}\cap \Delta^0$,  $(\mu_1,\mu_2,\mu_3,\mu_4)=(2,2,2,1)$ et $1\leq \eta_i\leq \mu_i$, $1\leq i\leq 4$, 
 les entiers $\eta_1$, $\eta_2$, $\eta_3$ satisfont une des relations suivantes:\\

\begin{itemize}
 \item[1)] $\eta_1=\eta_2=1$ et $\eta_3= 2$;
\item[2)] $\eta_1=\eta_3=1$ et $1\leq \eta_2\leq 2$;
\item[3)] $\eta_2=\eta_3=1$ et $1\leq \eta_1 \leq  2$.\\
\end{itemize}

Le $K$-wedge $\omega$ doit satisfaire l'\'equation  $\h_q(x_1,x_2)+\hk_{q}(x_3,x_4^2)=0$, d'o\`u la relation suivante:

\begin{equation}\label{eq:deux_eq_hyp_A4}
 \h_q(t^{\eta_1}\varphi_1,t^{\eta_2}\varphi_2)+\hk_{q}(t^{\eta_3}\varphi_3,t^2\varphi_4^2)=0.
\end{equation}

On remarque que le cas $2$) \'equivaut au cas $3$), quitte \`a permuter les variables $x_1$ et $x_2$. Il suffit donc de trouver une contradiction dans les Cas $1$) et $2$) pour achever la preuve de la Proposition \ref{pr:des_deux_famil_hyp_A4}.\\

Cas 1).  On suppose que  $\eta_1=\eta_2=1$ et $\eta_3= 2$.\\

 Comme $\mu_3=\eta_3=2$, on obtient que la s\'erie formelle $\varphi_3$ est inversible 
  et  que $\hk_{q}(t^{\eta_3}\varphi_3,t^{2}\varphi_4^2)=t^{2q}\hk_{q}(\varphi_3,\varphi_4^2)$.
  En vertu du lemme \ref{le:v-ord-hq_h'q}, on obtient que $\nu_v \hk_{q}(\varphi_3,\varphi_4^2)=0$. 
  Par cons\'equent, la s\'erie formelle  $\hk_{q}(\varphi_3,\varphi_4^2)$ est inversible.\\

D'apr\`es la Relation \ref{eq:deux_eq_hyp_A4}, on obtient la relation suivante:
\begin{center}
$ \h_q(\varphi_1,\varphi_2)+t^q\hk_{q}(\varphi_3,\varphi_4^2)=0$.
\end{center}

On rappelle que $\h_q$ est un polyn\^ome homog\`ene de degr\'e $q\geq 3$ sans facteur multiple et que $x_1$ et $x_2$ ne divisent pas $\h_q(x_1,x_2)$. On peut donc supposer que $\h_{q}(x_1,x_2)=\prod_{i=1}^{q}(b_ix_1+c_ix_2)$, o\`u $b_i,\;c_i\in \KK^{\star}$. Par cons\'equent, on a:
\begin{center}
 $\prod_{i=1}^{q}(b_i\varphi_1+c_i\varphi_2)=-t^q\hk_{q}(\varphi_3,\varphi_4^2)$.
\end{center}

Comme le polyn\^ome $\h_q$ n'a pas de facteur multiple, les s\'eries formelles $\varphi_1$ et $\varphi_2$ sont inversibles ou divisibles par $t$, ce qui est absurde.\\

Cas 2). On suppose que $\eta_1=\eta_3=1$ et $1\leq \eta_2\leq 2$.\\

 D'apr\`es la Relation \ref{eq:deux_eq_hyp_A4}, on obtient la relation suivante:
\begin{center}
$ \h_q(\varphi_1,t^{\eta_2-1}\varphi_2)+\hk_{q}(\varphi_3,t\varphi_4^2)=0$.
\end{center}

On rappelle que  $x_1$ et $x_2$ (resp. $x_3$ et $x_4$) ne divisent pas $\h_q(x_1,x_2)$ (resp. $\hk_q(x_3,x^2_4)$). On peut donc supposer que 
\begin{center}
$\h_{q}(x_1,x_2)=\prod_{i=1}^{q}(b_ix_1+c_ix_2)$, o\`u $b_i,\;c_i\in \KK^{\star}$.
\vspace{0.5cm}

 $\hk_{q}(x_3,x^2_4)=\prod_{i=1}^{q}(d_ix_3+f_ix^2_4)$, o\`u $d_i,\;f_i\in \KK^{\star}$.\\
\end{center}

On pose  $\gamma_i:=b_i\varphi_1+c_i\varphi_2t^{\eta_2-1}$
et $\gamma'_i:=d_i\varphi_2+f_i\varphi^2_4t$, $1\leq i\leq q$. Ainsi, on obtient la relation suivante:

\begin{equation}
\label{eq:gamma-gamma'}
\prod^{q}_{i=1}\gamma_i=-\prod^{q}_{i=1}\gamma'_i
 \end{equation}

Le lemme suivant nous permettra de construire un syst\`eme de relations entre  les s\'eries formelles $\varphi_i$, $1\leq i\leq 4$ 

\begin{lemme} \label{le:gamma-gamma'_irred} Les s\'eries formelles $\gamma_i$ et $\gamma'_i$, $1\leq i\leq q$, sont irr\'eductibles.
 \end{lemme}
\begin{proof} D'apr\`es le Lemme \ref{le:v-ord-hq_h'q}, il existe un vecteur $v=(u,1)\in \QQ^2_{>0}$ tel que  $\nu_v(\prod^{q}_{i=1}\gamma_i)=\degt{(\prod^{q}_{i=1}(\gamma_i)_v)}=q$ et $\nu_v(\prod^{q}_{i=1}\gamma'_i)=\degt{(\prod^{q}_{i=1}(\gamma'_i)_v)}=q$.\\

 Pour tout $i\in {1,2,...,q}$, les s\'eries formelles $\gamma_i$ et $\gamma'_i$ ne sont pas inversibles, car les s\'eries $\varphi_1$, $t^{\eta_2-1}\varphi_2$, $\varphi_3$ et $t\varphi_4$ ne le sont pas. Par cons\'equent, on a:

\begin{center}
 $\nu_v\gamma_i=\degt{((\gamma_i)_v)}=\degt{(b_1(\varphi_1)_v+t^{\eta_2-1}c_i(\varphi_2)_v)}=1$ et $\nu_v\gamma'_i=\degt{((\gamma'_i)_v)}=\degt{(d_i(\varphi_3)_v+tf_i(\varphi_4^2)_v)}=1$, 
\end{center}
pour tout $1\leq i\leq q$. Par cons\'equence,  on obtient que les s\'eries $\gamma_i$ et $\gamma'_i$ sont irr\'eductibles, pour tout $1\leq i\leq q$. \end{proof}

D'apr\`es le Lemme \ref{le:gamma-gamma'_irred} et la Relation \ref{eq:gamma-gamma'}, on peut supposer,  sans perte de g\'en\'eralit\'e, que les $\gamma_i$ et les $\gamma'_i$ satisfont les relations suivantes:

\begin{center}
 $\gamma_i=I_i\gamma'_i$, pour tout $1\leq i\leq q$,
\end{center}
o\`u les $I_i\in K[[s,t]]$ sont des s\'eries formelles inversibles telles que $\prod^{q}_{i=1}I_i=-1$. Autrement dit, on a le syst\`eme suivant:

\begin{center}
$b_i\varphi_1+c_i\varphi_2t^{\eta_2-1}-d_iI_i\varphi_3=f_iI_i\varphi^2_4t$, pour tout $1\leq i\leq q$, et $\prod^{q}_{i=1}I_i=-1$.
\end{center}

 Soit $M$ la matrice du syst\`eme ci-dessus, c'est-\`a-dire
\begin{center}
$M:= \begin{pmatrix} b_1& c_1 & -d_1I_1 \\  b_2& c_2 & -d_2I_2\\  \vdots & \vdots &\vdots \\ b_q& c_q & -d_qI_q \end{pmatrix}$
 \end{center}

On pose 

\begin{center}
$M_i:=\begin{pmatrix} b_1& c_1 & -d_1I_1 \\  b_2& c_2 & -d_2I_2 \\ b_i& c_i & -d_iI_i \end{pmatrix}$,
 \end{center}
pour $3\leq i\leq q$. Le lemme suivant ach\`eve  la d\'emonstration.

\begin{lemme} \label{le:rgM}  Il existe $3\leq i'\leq q$, tel que le d\'eterminant $\det M_{i'}$ est une s\'erie inversible.  
\end{lemme}
D'abord, en utilisant le Lemme \ref{le:rgM},  on montre qu'on arrive aussi  \`a une contradiction dans le Cas $2$). Ce qui ach\`eve la preuve de la Proposition \ref{pr:ser_for_inv_deux_cas_A4}.\\

 En vertu du Lemme \ref{le:rgM}, on peut supposer,  quitte \`a permuter les lignes de la matrice $M$, que la matrice  $M_3$ est inversible.  Comme on a  le syst\`eme 
\begin{center}
$b_i\varphi_1+c_i\varphi_2t^{\eta_2-1}-d_iI_i\varphi_3=f_iI_i\varphi^2_4t$, $1\leq i\leq 3$, 
\end{center}
 et que la matrice $M_3$ est inversible, on obtient que les s\'eries formelles $\varphi_1$, $\varphi_3$  sont divisibles par $t$. Ce qui est absurde.\\

\begin{proof}[{\it D\'emonstration du Lemme \ref{le:rgM}}]  Comme le polyn\^ome $\h_{q}(x_1,x_2)=\prod_{i=1}^{q}(b_ix_1+c_ix_2)$, $b_i,c_i\in \KK^{\star}$, n'a pas de facteur multiple, on obtient que 
$A:=\begin{pmatrix} b_1 & c_1 \\ b_2 & c_2 \end{pmatrix}$
 est de rang $2$. \\ 

Raisonnons par l'absurde, on suppose que pour tout $3\leq i\leq q$ le d\'eterminant $\det M_i$ n'est pas une s\'erie inversible. Comme la matrice $A$ est de rang $2$,  il existe  $\lambda_i, \lambda'_i\in \KK^{\star}$ , $1\leq i\leq q$, tels que $(b_i,c_i)=\lambda_i(b_1,c_1)+\lambda'_i(b_2,c_2)$. Ainsi on obtient:

\begin{center}
 $\det M_i=(b_1c_2-b_2c_1)(d_iI_i-\lambda_id_1I_1+\lambda'_id_2I_2)$,
\end{center}
pour tout $3\leq i\leq q$. En utilisant le syst\`eme

\begin{center}
$b_i\varphi_1+c_i\varphi_2t^{\eta_2-1}-d_iI_i\varphi_3=f_iI_i\varphi^2_4t$, pour tout $1\leq i\leq q$,
\end{center}
on obtient les relations suivantes 

\begin{center}
 $(d_iI_i-\lambda_id_1I_1+\lambda'_id_2I_2)\varphi_3=(f_iI_i-\lambda_if_1I_1+\lambda'_if_2I_2)\varphi^4t$,  pour tout $1\leq i\leq q$.
\end{center}

 Comme  les s\'eries $\det  M_i$ ne sont  pas inversibles, on obtient 
  les relations suivantes:

\begin{center}
$\lambda_id_1j_1+\lambda'_id_2j_2=d_ij_i$, 

\vspace{0.5cm}

$\lambda_if_1j_1+\lambda'_if_2j_2=f_ij_i$,
 \end{center}
o\`u $j_i$ est le terme constant de la s\'erie inversible $I_i$, $1\leq i\leq q$. Comme le polyn\^ome $\hk_{q}(x,y)=\prod_{i=1}^{q}(d_ix+f_iy)$, $b_i,f_i\in \KK^{\star}$, n'a pas de facteur multiple, on obtient que la matrice  $\begin{pmatrix} d_1 & f_1 \\ d_2 & f_2 \end{pmatrix}$
est inversible. Par cons\'equent, pour tout $i\in \{1,2,..,q\}$, il existe $l_i\in \KK^{\star}$ tel que 

\begin{center}
 $j_i=l_ij_1$.
\end{center}

Comme $\prod^{q}_{i=1}I_i=-1$, on obtient que $(\prod_{i=1}^{q}l_i)j_{1}^q=-1$. Par cons\'equent, on a  $j_1\in \KK^{\star}$.\\

Soient $\alpha_i$, $\widehat{\alpha}_i$, $a_i$, $\widehat{a}_i$, $i\leq i\leq 4$, et $\lambda^{\star}$ comme dans la d\'emonstration du Lemme \ref{le:v-ord-hq_h'q}.

On rappelle que $\sigma_{11}\in \Sigma$ est le c\^one  engendr\'e par les vecteurs $\rho_0=(2,2,2,1)$, $\rho_1=(1,1,1,1)$, $\e_2=(0,1,0,0)$ et $\e_3=(0,0,1,0)$ et que la restriction du morphisme $\pi:X(\Sigma)\rightarrow \AF^4_{\KK}$ \`a l'ouvert $U:=U_{\sigma_{11}}$ est donn\'ee de la fa\c con suivante:

\begin{center}
 $\pi:U\rightarrow \AF^{4}_{\KK}$, $(y_1,y_2,y_3,y_4)\mapsto (y_1^2y_4,y_1^2y_2y_4,y_1^2y_3y_4,y_1y_4)$
\end{center}
 
 L'intersection de $\widetilde{V}$ (resp. $\E_2$) et $U$  est donn\'ee par l'\'equation suivante (resp. les \'equations suivantes):
\begin{center}
 $\h_q(1,y_2)+\hk_{q}(y_3,y_4)=0$  (resp. $\h_q(1,y_2)+\hk_{q}(y_3,y_4)=0$  et $y_1=0$)
\end{center} En utilisant le morphisme $\pi$, on obtient que:
 \begin{center}
$\begin{array}{ccccccc}
\alpha_1 & = &\widehat{\alpha}_1^2\widehat{\alpha}_4,& \; & \alpha_2& =&\widehat{\alpha}_1^2\widehat{\alpha}_2\widehat{\alpha}_4,\\

\alpha_3 & = & \widehat{\alpha}_1^2\widehat{\alpha}_3\widehat{\alpha}_4,&\;& \alpha_4& =&\widehat{\alpha}_1\widehat{\alpha}_4.
\end{array}$
\end{center}

Maintenant on consid\`ere la relation 
\begin{center}
$b_i\varphi_1+c_i\varphi_2t^{\eta_2-1}-d_1I_1\varphi_3=f_1I_1\varphi^2_4t$
\end{center}
 En utilisant le homomorphisme $\lambda^{\star}$ on obtient la relation suivante

\begin{equation}
\label{eq:relS}
 b_1 +c_1\widehat{a}_2-d_1j_1\widehat{a}_3-f_1j_1\widehat{a}_4^2=0.
\end{equation}
 Ce qui est absurde. En effet, on consid\`ere le polyn\^ome 
\begin{center}
$\g(x_1,x_2,x_3,x_4)=b_1+c_1y_2-d_1j_1y_3-f_1j_1y_4$.
\end{center}
Comme $j_1\in \KK^{\star}$, on a $\g\in \KK[y_1,y_2,y_3,y_4]$. \\

Soit $S$ l'adh\'erence dans $\widetilde{V}$ de l'hypersurface de $U$ donn\'ee l'\'equation $g=0$. En vertu de la relation \ref{eq:relS}, on obtient que  $\widehat{\alpha}(0)$ appartient \`a l'hypersurface $S$, o\`u $\widehat{\alpha}$ est le rel\`evement de $\alpha$ \`a $\widetilde{V}$ et $0$ est le point ferm\'e de $K[[t]]$.

 On remarque que les diviseurs $S$ et $\E_2$  sont transverses.  Alors $\widehat{\alpha}(0)$ appartient \`a l'intersection de $S$ et $\E_2$ ce qui est une contradiction car $\widehat{\alpha}(0)$ est le point g\'en\'erique de $\E_2$.
\end{proof}
On a montr\'e que les Cas $1$) et $2$) conduisent \`a une contradiction. Par cons\'equent, on en d\'eduit que les 
s\'eries formelles $\varphi_i$, $1\leq i \leq 4$, sont inversibles.
\end{proof}

\end{document}